\documentclass[11pt,a4paper]{article}
\usepackage[T1]{fontenc}
\usepackage{lmodern}
\usepackage[margin=27mm,headheight=14pt,headsep=7mm]{geometry}
\usepackage{amsmath,amssymb,amsthm,amscd,mathtools,mathrsfs}
\usepackage{enumitem,needspace,microtype,setspace,fancyhdr}
\usepackage[numbers,square,comma]{natbib}
\usepackage[hidelinks]{hyperref}
\allowdisplaybreaks[2]
\numberwithin{equation}{section}
\newtheorem{theorem}{Theorem}[section]
\newtheorem{proposition}[theorem]{Proposition}
\newtheorem{lemma}[theorem]{Lemma}
\newtheorem{corollary}[theorem]{Corollary}
\theoremstyle{definition}
\newtheorem{definition}[theorem]{Definition}
\theoremstyle{remark}
\newtheorem{remark}[theorem]{Remark}

\theoremstyle{plain}
\newtheorem*{theoremA}{Theorem A}
\newtheorem*{theoremB}{Theorem B}
\BeforeBeginEnvironment{proposition}{\Needspace{7\baselineskip}}
\BeforeBeginEnvironment{lemma}{\Needspace{6\baselineskip}}
\BeforeBeginEnvironment{corollary}{\Needspace{6\baselineskip}}
\BeforeBeginEnvironment{theorem}{\Needspace{8\baselineskip}}
\BeforeBeginEnvironment{theoremA}{\Needspace{8\baselineskip}}
\BeforeBeginEnvironment{theoremB}{\Needspace{8\baselineskip}}
\BeforeBeginEnvironment{definition}{\Needspace{5\baselineskip}}
\BeforeBeginEnvironment{proof}{\Needspace{3\baselineskip}}
\title{Weil Operators and Remainders of Anderson Generating Functions
over a Punctured Projective Line}
\author{Chuangqiang Hu$^1$, Xiao-Min Huang$^2$ and Guorui Ma$^{3,4}$\\[6pt]
\small $^1$School of Mathematics, Sun Yat-Sen University, Guangzhou 510275, P. R. China\\
\small $^2$School of Mathematics and Statistics, Guangdong University of Technology,\\
\small Guangzhou 510006, P. R. China\\
\small $^3$Shanghai Institute for Mathematics and Interdisciplinary Sciences, Shanghai, China\\
\small $^4$Research Institute of Intelligent Complex Systems, Fudan University, Shanghai, China\\[4pt]
\small E-mail: \href{mailto:huchq5@mail.sysu.edu.cn}{huchq5@mail.sysu.edu.cn} (C. Hu);\\
\small \href{mailto:mahuangxm@gdut.edu.cn}{mahuangxm@gdut.edu.cn} (X.-M. Huang);\\
\small \href{mailto:mgr18@tsinghua.org.cn}{mgr18@tsinghua.org.cn} (G. Ma, corresponding author)}
\date{}
\hypersetup{pdftitle={Weil Operators and Remainders of Anderson Generating Functions over a Punctured Projective Line},pdfauthor={Chuangqiang Hu, Xiao-Min Huang, Guorui Ma}}

\newcommand{\Fq}{\mathbb F_q}
\newcommand{\A}{A_\rho}

\newcommand{\Cinf}{\mathbb C_\infty}
\newcommand{\KPhi}{\mathcal K_\Phi}

\newcommand{\OO}{\mathcal O}
\newcommand{\WW}{\mathcal W}
\newcommand{\DD}{\mathfrak D}
\newcommand{\EE}{\mathcal E}

\newcommand{\Alt}{\operatorname{Alt}}
\newcommand{\Frac}{\operatorname{Frac}}
\newcommand{\Tr}{\operatorname{Tr}}

\newcommand{\Hom}{\operatorname{Hom}}
\newcommand{\Spec}{\operatorname{Spec}}
\newcommand{\Span}{\operatorname{Span}}

\newcommand{\wt}{\widetilde}

\newcommand{\tensor}{\otimes}

\newcommand{\dfrak}{\mathfrak d}

\newcommand{\pideal}{\mathfrak p}
\newcommand{\aideal}{\mathfrak a}

\newcommand{\End}{\operatorname{End}}

\begin{document}

\maketitle
\begin{abstract}
Let $A_\rho$ be the coordinate ring of the projective line over
$\Fq$ with a closed point of degree $N$ removed. We give finite formulas
for higher Weil operators in a basis of Riemann--Roch spaces, which recover
the formulas of Hu--Ou--Yang when $N=1$. We identify
these operators with representatives of the residue Casimir tensors and
obtain a determinant formula for the Weil pairing, relative to a chosen
determinant realization. For a normalized rank-one Drinfeld module, we
compute the remainder of its Anderson generating function modulo any
nonconstant $p\in A_\rho$. The formula includes multiple zeros and a zero
at the ordinary point at infinity. Its last coefficient is a nonzero
scalar multiple of an exponential division value. When $(p)$ is a finite
principal prime, a value of the associated special function at one zero
of $p$ generates the torsion field after adjoining that zero. We give an
explicit $q$-linear formula recovering a torsion generator from this
value. We also specialize the evaluation theorem of Gazda--Maurischat
and compute the different as the principal ideal $(\partial_xp)$.
\end{abstract}

\noindent\textbf{2020 Mathematics Subject Classification.} 11G09, 11R58, 12H10.

\medskip

\section{Introduction}\label{sec:introduction}

Drinfeld modules are function-field analogues of elliptic curves
\cite{Drinfeld1974,Goss1996,Papikian2023}. The Carlitz module and Hayes's
explicit class field theory provide basic examples over rational function
fields \cite{Carlitz1935,Hayes1979}. Anderson generating functions and
special functions also occur in formulas for zeta values, $L$-values and
Gauss--Thakur sums
\cite{AndersonThakur1990,Pellarin2012,AnglesPellarin2015}.

Let $\rho\in\Fq[t]$ be monic irreducible of degree $N$, and let
$P_\rho$ be the corresponding closed point of $\mathbf P^1_{\Fq}$. We
study explicit formulas over
\[
 \A=H^0(\mathbf P^1_{\Fq}\setminus\{P_\rho\},\mathcal O).
\]
For $N=1$, this is a polynomial ring. For $N>1$, extending the constant
field to $H=\Fq(\zeta)$, where $\rho(\zeta)=0$, gives the localization
\eqref{eq:A_H-local} of $H[x]$, with $x=(t-\zeta)^{-1}$.
Our purpose is to compute higher Weil operators and remainders of
Anderson generating functions in explicit Riemann--Roch bases of this
ring.

\subsection{The pairing and the dual basis tensor}

Van der Heiden constructed a determinant Weil pairing for Drinfeld
modules over a general curve \cite{vanderHeiden2004}. Hu--Huang expressed
the pairing through a basis and its residue dual
\cite[Theorem~5.9 and Definition~5.10]{HuHuang2024}.
For a polynomial ring, Hu--Ou-Yang identified the higher Weil operator
with the remainder of a product of rank-two operators
\cite[Definition~2.4, Corollary~2.22 and Theorem~2.23]{HuOuYang2026}.
Hu--Yu related finite residue tensors to geometric kernels
\cite[Theorems~6.2 and~6.3]{HuYu2026}.

We give finite formulas in the basis $T_k$ of
Proposition~\ref{prop:factorial-basis-imrn}.
Theorem~\ref{thm:universal-O-imrn} proves regularity and the determinant
identity. Proposition~\ref{thm:O-combinatorial-revised} computes the
entries of that determinant. Proposition~\ref{thm:pairing-equals-O-imrn}
identifies the resulting function with the representative of the
residue tensor in the specified Riemann--Roch space.

We state these results after fixing the notation. Let
$\zeta$ be a fixed root of $\rho$ and put $H=\Fq(\zeta)$. For Drinfeld
modules over $\A$ the infinite place is $P_\rho$. If $p\in\A$ has exact
pole order $d$ at $P_\rho$, put $n=Nd$ and
$V_p=\Span_H\{e_0,\ldots,e_{n-1}\}$, where $e_0=1$ and
$e_i(t)=\prod_{\ell=1}^{i}(t-\zeta^{q^{-\ell}})^{-1}$ for $1\le i<n$.
Write $A_H=\A\otimes_{\Fq}H$ and $x(t)=(t-\zeta)^{-1}$. For functions
$F_0,\ldots,F_{r-1}$, the alternant is
$\Alt(F_0,\ldots,F_{r-1})=\det(F_{i-1}(t_j))_{i,j=1}^r$. For $p\in A_H$
and $r\ge2$, the higher Weil operator is the alternant quotient
\begin{equation}\label{eq:intro-O-revised}
\OO_{p,\zeta}^{[r]}=
\frac{\Alt(1,p,xp,\ldots,x^{r-2}p)}
{\prod_{i<j}(x_j-x_i)},\qquad x_i=x(t_i),
\end{equation}
a priori a rational function in $t_1,\ldots,t_r$ over
$H$. For every nonzero $p\in A_H$, the divided difference kernel $K_p$
and the differential $\omega_p$ in
\begin{equation*}
 K_p(u,v)=\frac{p(u)-p(v)}{x(u)-x(v)},\qquad \omega_p=-\frac{dx}{p}
\end{equation*}
link the operator to the residue pairing.

\begin{theoremA}[Explicit higher Weil operator]
For $p\in A_H$ and $r\ge2$, the tensor \eqref{eq:intro-O-revised} is
symmetric and belongs to the $r$-fold tensor power
$A_H^{\otimes_H r}$. It has the sum formula and the determinant
formula in the divided differences of \eqref{eq:dd-rec-imrn}
\begin{align*}
 \OO_{p,\zeta}^{[r]}
 &=\sum_{\nu=1}^r\prod_{i\ne\nu}\frac{p(t_i)}{x_i-x_\nu}\\
 &=\det\bigl((x^{r-2+j-i}p)[x_1,\ldots,x_r]\bigr)_{i,j=1}^{r-1}.
\end{align*}
For $p\in\A\setminus\Fq$ of exact pole order $d$, the class of
$K_p$ in $(A_H/(p))^{\otimes_H 2}$ is the residue Casimir tensor of
$A_H/(p)$, and $\OO_{p,\zeta}^{[r]}$ is the representative of the
higher Casimir tensor
\begin{align*}
 \OO_{p,\zeta}^{[r]}
 =\operatorname{rem}_{p,t_r}\left(\prod_{j<r}K_p(t_j,t_r)\right)
\end{align*}
where $\operatorname{rem}_{p,t_r}$ takes the class modulo $p(t_r)$
to its unique representative in $V_p$ in the last variable.
\end{theoremA}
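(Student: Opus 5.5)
We argue entirely in the coordinate $x=(t-\zeta)^{-1}$. By the localization \eqref{eq:A_H-local}, $A_H$ is a localization of $H[x]$ (functions regular away from the conjugate points $\zeta^{q^k}$), so each $p\in A_H$ is a rational function of $x$. The proof has three steps: the algebraic identities, integrality and symmetry, and the Casimir interpretation.

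\emph{Step 1 (sum formula).} Expand the alternant $\Alt(1,p,xp,\ldots,x^{r-2}p)$ along the row of $1$'s. The cofactor attached to column $\nu$ equals
\[
\Bigl(\prod_{i\ne\nu}p(t_i)\Bigr)\,\mathrm{Vand}\bigl((x_i)_{i\ne\nu}\bigr).
\]
Dividing by the full Vandermonde $\prod_{i<j}(x_j-x_i)$ leaves, up to the sign $(-1)^{\nu+1}$, the factor $\prod_{i\ne\nu}(x_i-x_\nu)^{-1}$. Tracking this sign against the orientation of the removed factors gives
\[
\sum_{\nu}\prod_{i\ne\nu}\frac{p(t_i)}{x_i-x_\nu}.
\]
Symmetry is immediate: both numerator and denominator are alternating.

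\emph{Step 2 (determinant formula).} Use the standard fact that for a function $f$ of $x$, the divided difference $f[x_1,\ldots,x_r]$ equals $\Alt(1,x,\ldots,x^{r-2},f)/\mathrm{Vand}$. Apply column operations to the $(r-1)\times(r-1)$ determinant of divided differences of $x^{k}p$. This is a Sylvester/Schweins-type identity: a determinant of divided differences of shifted monomial multiples collapses to the alternant quotient. It is cleanest to prove by induction on $r$ using the recursion \eqref{eq:dd-rec-imrn}, or by a Cauchy–Binet expansion of both matrices against the Vandermonde in $x_1,\ldots,x_r$. I expect this identity bookkeeping to be the main obstacle. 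First I would verify it for $r=2,3$ to fix signs and the index pattern $r-2+j-i$.

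\emph{Step 3 (integrality).} To show the operator lies in $A_H^{\otimes r}$, note that the alternant quotient is a polynomial in the variables $x_i$ and the values $p(t_i)$. The possible denominators $x_j-x_i$ cancel because the numerator vanishes on $x_i=x_j$, and $x_i=x_j$ forces $t_i=t_j$ since $x$ is a coordinate. The only remaining poles are those of the $p(t_i)$, which are allowed in $A_H$. Hence the quotient is regular away from the excluded points in each variable.

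\emph{Step 4 (Casimir interpretation).} For $p\in\A\setminus\Fq$, reduce both arguments of $K_p$ modulo $p$. The ring $A_H/(p)$ is a finite $H$-algebra with trace form given by residues of $\omega_p$. The claim is that $\sum_k \mathrm{res}(f\,g\,\omega_p)$ pairs correctly with $K_p$, namely $\mathrm{res}_v\bigl(K_p(u,v)g(v)\omega_p(v)\bigr)=g(u)$ modulo $p$. I would prove this by the residue theorem on $\mathbf P^1$: the function $\frac{p(u)-p(v)}{x(u)-x(v)}\cdot\frac{-dx(v)}{p(v)}\,g(v)$ has poles at the zeros of $p(v)$ and at $v=u$. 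The residue at $v=u$ produces $g(u)$, and the pole at $P_\rho$ is controlled by the choice $V_p$. Hence $K_p$ is the Casimir (dual basis) tensor.

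\emph{Step 5 (higher Casimir tensor).} The higher Casimir is $\prod_{j<r}K_p(t_j,t_r)$ reduced modulo $p(t_r)$. Modulo $p(t_r)$ each factor becomes $p(t_j)/(x_j-x_r)$, which is exactly the $\nu=r$ term of the sum formula. The other terms $\nu<r$ contain the factor $p(t_r)$ and so vanish modulo $p(t_r)$, provided they have the right regularity. It remains to check that $\OO^{[r]}_{p,\zeta}$, as a function of $t_r$, lies in $V_p$. I would do this by pole counting at the points $\zeta^{q^{-\ell}}$ and at infinity, using the degree bound $n=Nd$ together with Proposition~\ref{prop:factorial-basis-imrn}. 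Uniqueness of the representative in $V_p$ then gives the equality.
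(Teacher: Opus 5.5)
The real gap is in Step 5. The terms $\nu<r$ of the sum formula, and the replacements $K_p(t_j,t_r)\equiv p(t_j)/(x_j-x_r)$, are not elements of $A_{H,r}$. They have poles along the diagonals $x_i=x_\nu$, and only the full sum is regular. So ``contains the factor $p(t_r)$, hence vanishes modulo $p(t_r)$'' has no meaning in $A_{H,r}/p(t_r)A_{H,r}=A_H^{\otimes_H(r-1)}\otimes_H B_p$. Your proviso that these terms have ``the right regularity'' is false term by term. You need to clear the denominator $V_x$ and then remove it. Put $V'=\prod_{1\le i<j\le r-1}(x_j-x_i)$. Then $V_x\prod_{j<r}K_p(t_j,t_r)=(-1)^{r-1}V'\prod_{j<r}(p(t_j)-p(t_r))$. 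The alternant differs from its $\nu=r$ cofactor term $(-1)^{r-1}V'\prod_{j<r}p(t_j)$ by cofactor terms that each contain $p(t_r)$. Hence
\[
V_x\Bigl(\prod_{j<r}K_p(t_j,t_r)-\OO_p^{[r]}\Bigr)=p(t_r)M,\qquad M\in A_{H,r}.
\]
To conclude that $p(t_r)$ divides $\prod_{j<r}K_p(t_j,t_r)-\OO_p^{[r]}$ itself, use that $A_{H,r}$ is a UFD in which $p(t_r)$, a function of $x_r$ alone, shares no prime factor with any $x_j-x_i$. Equivalently, $V_x$ is a non-zero-divisor on $A_H^{\otimes_H(r-1)}\otimes_H B_p$. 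This is the content of Proposition~\ref{prop:anchored-congruence-imrn}, where the paper instead shows that $M$ vanishes modulo each $x_j-x_i$, so $V_x\mid M$. Only with this congruence in hand can your pole count in $t_r$ and the uniqueness of representatives in $V_p$ finish the proof; that part matches the paper.

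Step 4 also needs repair. The differential $K_p(u,v)g(v)\omega_p(v)$ has no pole at $v=u$, because the divided difference $K_p$ is regular on the diagonal. Its poles lie only at the zeros of $p$ and at the punctures $P_s$. The argument you are reaching for works after splitting $K_p(u,v)/p(v)=p(u)/\bigl(p(v)(x_u-x_v)\bigr)-1/(x_u-x_v)$ with $u$ generic. For $g\in V_p$, the differential $g(v)\,dx(v)/\bigl(p(v)(x_u-x_v)\bigr)$ is regular at every $P_s$. This is exactly because of the pole bounds $d-1$ at $P_0$ and $d$ at $P_s$, $s\ne0$, that define $V_p$. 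Its only poles are at the zeros of $p$ and at $v=u$, where the residue is $-g(u)/p(u)$. Now $\ell_p$ is the sum of residues at the punctures, that is, minus the sum at the zeros of $p$. This gives $\ell_p\bigl(K_p(u,\cdot)g\bigr)=g(u)$ exactly for $g\in V_p$, the reproducing property that characterizes the Casimir tensor. Stated this way, it is a valid and more elementary substitute for the paper's relative residue argument over $\Spec B_p$.

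Your Step 3 is likewise a valid alternative. The paper obtains regularity from the determinant formula, whose entries lie in $A_{H,r}$; you obtain it from divisibility of the alternant by the pairwise non-associate primes $x_j-x_i$. Step 2, by contrast, is only a list of strategies. The missing concrete step is the factorization
\[
(x^{r-2+j-i}p)[x_1,\ldots,x_r]=\sum_{\nu=1}^{r}x_\nu^{r-1-i}\,\frac{p(t_\nu)}{\prod_{k\ne\nu}(x_\nu-x_k)}\,x_\nu^{j-1},
\]
followed by Cauchy--Binet over the $r$ maximal minors, each of which produces one term of the sum formula.
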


Expanding every entry of the determinant formula in the basis $(T_k)$ gives the finite sums of
Proposition~\ref{thm:O-combinatorial-revised}. Their denominators are
units of $A_H^{\otimes_H r}$, not diagonal factors $x_i-x_j$. The class
of $\OO_{p,\zeta}^{[r]}$ in $(A_H/(p))^{\otimes_H r}$ is the higher
dual basis tensor. Under the hypotheses of
Proposition~\ref{thm:actual-Weil-formula}, including that $(p)$ is a
principal prime away from the characteristic and a compatible
determinant realization is fixed, it gives the scalar coordinate of the Weil pairing
with respect to $\omega_p$. The regularity and determinant
identities are proved in Theorem~\ref{thm:universal-O-imrn}, and the
representative assertion is Proposition~\ref{thm:pairing-equals-O-imrn}.

\subsection{Generating functions and remainder results}

We use the normalized rank-one Drinfeld module $\Phi$ of
Hu--Huang--Yau \cite[Definition~6.6]{HuHuangYau2026}.
Its exponential, period and generating function are recalled in
Section~\ref{sec:drinfeld}. The exact remainder formula
\eqref{eq:exact-analytic-division} follows from the rational identity
\eqref{eq:exact-kernel-division}. It applies to the entire zero divisor
of $p$, with multiplicities. At a finite principal prime,
Proposition~\ref{thm:one-value-torsion} gives a $q$-linear formula
recovering a torsion generator from one special value.

We keep the notation of Section~1.1 and add the analytic
notation. Let $\theta$ be the generic characteristic image of $t$, write
$K=\Fq(t)$, and let $\Cinf$ be the completion of an algebraic closure
of $K$ at $P_\rho$. Put
\begin{align*}
 \beta(u)=\frac{u-\zeta^{q}}{u-\zeta},\qquad
 \gamma_\zeta=\beta(\theta)^{-1}.
\end{align*}
The Anderson generating function is built from the normalized module
$\Phi$ introduced in Section~\ref{sec:drinfeld} and the exponential
evaluation map $\EE_{\varphi,z}$ of
Definition~\ref{def:exponential-action-revised}. For $z\in\Cinf$ it is
\begin{equation*}
 H_\Phi(z;t)=\gamma_\zeta\,
 \EE_{\Phi,z}\left(\frac{\beta(u)}{x(u)-x(t)}\right),
\end{equation*}
where the kernel is a rational function of the independent variable $u$,
and its evaluation is the convergent series defined and justified in
Section~\ref{sec:drinfeld}. Next, fix
$p\in\A\setminus\Fq$ of exact pole order $d$ at $P_\rho$, with leading
coefficient $\alpha_n\ne0$, and let $\mathscr A_p$ be the product of the
analytic algebras on sufficiently small discs around all zeros of $p$,
including $P_\infty$ when it is a zero. Modulo $p\mathscr A_p$, every
$F\in\mathscr A_p$ has a unique representative
\begin{equation*}
 [F]_p=\sum_{i=0}^{n-1}C_i(F)e_i(t)\in\Cinf\otimes_H V_p,
\end{equation*}
the representative space of Section~\ref{sec:pairing}.
With $K_p$ the divided difference kernel of Theorem~A, define the
remainder
\begin{equation*}
 R_p(z;t)=\gamma_\zeta\,
 \EE_{\Phi,z}\left(\frac{\beta(u)}{p(u)}K_p(u,t)\right).
\end{equation*}
Finally, let $\iota:\A\to\Cinf$ be the embedding $a\mapsto a(\theta)$,
let $\wt\pi_\Phi$ be the chosen period with
$\ker\exp_\Phi=\wt\pi_\Phi\iota(\A)$, and put
$\omega_\Phi=H_\Phi(\wt\pi_\Phi;\cdot)$. For the field statement, let
$\pideal=(p)$ be a finite principal prime, let $\lambda$ be a finite
zero of $p$, and let $\KPhi\subset\Cinf$ be a field containing $\iota(K)$
and the coefficients of $\Phi$. Theorem~B uses the normalization of $\Phi$
fixed above. Its field statement is restricted to finite principal
primes and keeps $\lambda$ in the base field.

\begin{theoremB}[Exact remainder and torsion fields]
For every $z\in\Cinf$, the difference $H_\Phi(z;t)-R_p(z;t)$ is
divisible by $p(t)$ in $\mathscr A_p$, and $R_p$ is the $p$-remainder
of $H_\Phi$:
\begin{align*}
 H_\Phi(z;t)-R_p(z;t)
 &=p(t)H_\Phi\left(\frac{z}{p(\theta)};t\right),\qquad
 [H_\Phi(z;t)]_p=R_p(z;t).
\end{align*}
The last coefficient of the remainder in the basis $e_i$ is
\begin{align*}
 C_{n-1}(H_\Phi(z;\cdot))
 =\alpha_n\gamma_\zeta\exp_\Phi\left(\frac{z}{p(\theta)}\right).
\end{align*}
A single value of $\omega_\Phi$ generates the full $\pideal$-torsion
extension
\begin{align*}
 \KPhi(\lambda,\omega_\Phi(\lambda))
 =\KPhi(\lambda,C_{n-1}(\omega_\Phi))
 =\KPhi(\lambda,\Phi[\pideal]).
\end{align*}
The last coefficient is recovered from $\omega_\Phi(\lambda)$ by the
explicit $q$-linear polynomial in \eqref{eq:q-linear-leading-recovery}.
\end{theoremB}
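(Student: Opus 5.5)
The proof of Theorem~B splits into four steps.

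\textbf{Step 1: the remainder identity.} This step is a kernel identity pushed through the $\Cinf$-linear, termwise-convergent map $\EE_{\Phi,z}$. I start from the rational identity
\[
\frac{1}{x(u)-x(t)}-\frac{K_p(u,t)}{p(u)}=\frac{p(t)}{p(u)}\cdot\frac{1}{x(u)-x(t)},
\]
which is immediate from $K_p(u,t)=(p(u)-p(t))/(x(u)-x(t))$; this is \eqref{eq:exact-kernel-division}. I multiply by $\gamma_\zeta\beta(u)$ and apply $\EE_{\Phi,z}$ in $u$. The left side becomes $H_\Phi(z;t)-R_p(z;t)$.

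On the right side, the factor $p(t)$ is a constant for the $u$-evaluation and comes out. I then need the twisting rule
\[
\EE_{\Phi,z}\bigl(F(u)/p(u)\bigr)=\EE_{\Phi,z/p(\theta)}(F),
\]
for kernels $F$ regular in the relevant sense. This should follow from the definition of $\EE$ as $\sum_k$ of coefficients of $\exp_\Phi$ times Frobenius twists. Concretely, the factor $1/p(u)$ attached to the $k$-th twist is matched by $\Phi_p$-equivariance, $\exp_\Phi(p(\theta)w)=\Phi_p(\exp_\Phi w)$, read at the level of the generating series.

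I expect this to be the main obstacle. One has to check, in the exact formalism of Definition~\ref{def:exponential-action-revised}, that dividing the kernel by $p(u)$ corresponds to rescaling $z$ by $p(\theta)^{-1}$. If this is not literally a formal consequence, I would instead compare both sides through the functional equation characterizing $H_\Phi$, namely that $\Phi$ acting by $t$ on $H_\Phi$ equals multiplication by $t$ up to the correction term. Combined with uniqueness of solutions in $\mathscr A_p$, this would give the same identity.

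\textbf{Step 2: $R_p$ is the remainder.} Since $R_p(z;t)$ is a combination of $K_p(u,t)$ in the variable $t$, and $K_p(\cdot,t)$ lies in $V_p$ (this is the residue Casimir statement of Theorem~A), the right side of Step~1 is divisible by $p(t)$ in $\mathscr A_p$. Together with the uniqueness of the representative $[F]_p$ in $\Cinf\otimes_H V_p$, this gives $[H_\Phi]_p=R_p$.

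\textbf{Step 3: the last coefficient.} I expand $K_p(u,t)$ in the basis $e_i(t)$. The coefficient of $e_{n-1}(t)$ is $\alpha_n$ times a function of $u$ which, after multiplying by $\beta(u)/p(u)$, should reduce to $1$ up to the constant $\gamma_\zeta$. This is a top-degree term comparison: $p=\alpha_n e_{n-1}$ plus lower terms in the pole filtration at $P_\rho$. By Step~1's twisting rule, $\EE_{\Phi,z}$ of that term is then $\exp_\Phi(z/p(\theta))$, up to the normalization absorbed by $\beta$.

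\textbf{Step 4: the torsion fields.} I take $z=\wt\pi_\Phi$. Then $C_{n-1}(\omega_\Phi)=\alpha_n\gamma_\zeta\exp_\Phi(\wt\pi_\Phi/p(\theta))$. Since $\wt\pi_\Phi/p(\theta)$ generates $p(\theta)^{-1}\wt\pi_\Phi\iota(\A)$ modulo the lattice, this is a generator of the cyclic $\A/\pideal$-module $\Phi[\pideal]$, and the torsion is generated as $\Phi_a$-images of it. Hence $\KPhi(\lambda,C_{n-1})=\KPhi(\lambda,\Phi[\pideal])$.

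Next, $\omega_\Phi(\lambda)=R_p(\wt\pi_\Phi;\lambda)=\sum_i C_i e_i(\lambda)$. Each $C_i$ is $\EE$ applied to a kernel $\beta(u)\cdot(\text{coefficient})/p(u)$, so it is an $\A$-action $\Phi_{a_i}$ applied to the torsion generator. Therefore $\omega_\Phi(\lambda)=f(C_{n-1})$ for an $\Fq$-linear (in fact $q$-linear) polynomial $f$ with coefficients in $\KPhi(\lambda)$. This gives one inclusion.

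For the converse, the map $\Phi[\pideal]\to\Cinf$, $\xi\mapsto f(\xi)$, is $\A$-linear into the one-dimensional $\A/\pideal$-line cut out by evaluation at $\lambda$. Here $\A$ acts on the image through $a\mapsto a(\lambda)$, because $e_i(\lambda)$ realizes the reduction $\A/\pideal\to\Fq(\lambda)$. The map is nonzero, since the residue pairing at $\lambda$ is nondegenerate and $\partial_xp(\lambda)\neq0$ for a prime. So the map is injective on the simple module. I then invert it explicitly via the trace/dual basis of Theorem~A, which yields \eqref{eq:q-linear-leading-recovery} and the reverse inclusion.
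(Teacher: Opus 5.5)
Your Steps 1 and 2 follow the paper's route: the rational identity \eqref{eq:exact-kernel-division} is evaluated at $u=\theta^{q^m}$ and summed, and then uniqueness of representatives in $\Cinf\otimes_H V_p$ is used. Step 4, however, has a genuine gap, and it lies in the ``single value'' part of the theorem.

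Evaluating at $\lambda$ alone gives one relation $\omega_\Phi(\lambda)=\sum_iC_ie_i(\lambda)$. The residue/dual-basis inversion you propose cannot turn it into a formula for $C_{n-1}$. What duality actually yields, from $C_{n-1}(F)=\ell_p(\DD_{n-1}F)$ and $\ell_p(F)=\sum_sF(\lambda_s)/\partial_xp(\lambda_s)$, is $C_{n-1}=\sum_s b_s\,\omega_\Phi(\lambda^{q^s})$ with $b_s=\alpha_n/(\beta(\lambda_s)\partial_xp(\lambda_s))$. This needs the values at all $n$ conjugate zeros.

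The missing idea is the Frobenius equation \eqref{eq:omega-special-equation}. Since $(\tau F)(t^q)=F(t)^q$ on $\mathbb T_\zeta$, it gives $w_{s+1}=w_s^q/f_\zeta(\lambda_{s+1})$, hence \eqref{eq:w-from-w0}. Only this converts $\sum_sb_sw_s$ into the $q$-linear polynomial \eqref{eq:q-linear-leading-recovery} in $w_0=\omega_\Phi(\lambda)$, and shows $\KPhi(\lambda,w_0,\dots,w_{n-1})=\KPhi(\lambda,w_0)$.

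Injectivity of your map $f$ on $\Phi[\pideal]$ does not by itself put $\mu_p$ in $\KPhi(\lambda,\omega_\Phi(\lambda))$. You would need an additional Galois-equivariance argument, and even that gives only the field equality, not the explicit recovery formula the theorem asserts. Your nonvanishing argument is also not valid as stated. After the $\Fq(\lambda)$-scalars are multiplied into $\Cinf$, $f(\mu_p)$ is a Gauss--Thakur-type resolvent, and nondegeneracy of a pairing on $B_p$ says nothing about whether it vanishes. With the recursion no separate nonvanishing is needed, and in any case $\omega_\Phi$ is a unit of $\mathbb T_\zeta$ by \eqref{eq:omega-H-equality}.

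Step 3 also needs a different justification. The claim that $p=\alpha_ne_{n-1}+{}$lower terms is false: $p$ has a pole of order $d$ at $P_0$, so it lies outside $V_p$, and in fact $T_n=x\,e_{n-1}$. A leading-term comparison would not suffice anyway. If the $e_{n-1}(t)$-coefficient of $\beta(u)K_p(u,t)/p(u)$ were $g(u)/p(u)$ with $g$ nonconstant, then $\EE_{\Phi,z}$ would give $g\diamond_{\Phi,H}\exp_\Phi(z/p(\theta))$, not a scalar multiple of $\exp_\Phi(z/p(\theta))$. What you need is the exact identity $\beta\,\DD_{n-1}=\alpha_n$. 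It follows from $\DD_{n-1}=\alpha_nT_{n-1}^{(-1)}/T_{n-1}=\alpha_n(t-\zeta)/(t-\zeta^{(-(n-1))})$ together with $\zeta^{(-(n-1))}=\zeta^{q}$, which holds because $N\mid n$.

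Conversely, the twisting rule you single out as the main obstacle in Step 1 is immediate at the level of the series. Since $p$ has coefficients in $\Fq$, $z^{q^m}/p(\theta^{q^m})=(z/p(\theta))^{q^m}$, and no $\Phi_p$-equivariance or uniqueness argument is involved. Finally, the $C_i$ are $H$-linear combinations of the points $\Phi_b(\mu_p)$ rather than torsion points. You should also note $H\subset\Fq(\lambda)$ (from $N\mid n$), so that $\alpha_n$, $\gamma_\zeta$ and these scalars lie in $\KPhi(\lambda)$.
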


The exact identity is proved in
Theorem~\ref{thm:H-remainder-imrn}. The field statement is its
consequence in Proposition~\ref{thm:one-value-torsion}.

As a further application, in Section~\ref{sec:gauss} we specialize the
evaluation formalism of Gazda--Maurischat \cite[Theorem~3.11 and
Theorem~5.2]{GazdaMaurischat2021} to the chosen branch. We apply their
isomorphism \eqref{eq:delta-p-GM} and evaluation map to the special
function $\omega_\Phi$ of $\Psi$, the rank one module fixed in
Section~\ref{sec:drinfeld}. We also compute the different as the
principal ideal $(\partial_xp)$ with
$\partial_xp=-(t-\zeta)^2\,dp/dt$. This yields the
explicit Gauss--Thakur evaluation of
Proposition~\ref{prop:GM-specialized}. The special function module is
$\Psi$, while the torsion extension in Theorem~B belongs to $\Phi$.

\subsection{Scope and organization}\label{sec:conclusion}
The new content over \cite{HuOuYang2026} is precisely the case $N>1$,
in which the coordinate ring is no longer the polynomial ring. The
contribution is the explicit calculation over $A_\rho$ in the
bases $T_k$ and $e_i$, including the representative identity
\eqref{eq:W-equals-O-imrn} and the recovery formula
\eqref{eq:explicit-torsion-recovery}. The normalized module and its
period come from \cite{HuHuangYau2026}. The general determinant pairing
comes from \cite{vanderHeiden2004}, and the Gauss--Thakur evaluation is
an application of \cite[Theorem~5.2]{GazdaMaurischat2021}.

Section~\ref{sec:factorialandoperators} proves the divided-difference
formulas. Section~\ref{sec:motive-Weil} gives the residue and motive
construction used to identify the pairing. Section~\ref{sec:pairing}
computes its coordinates on the punctured line.
Section~\ref{sec:drinfeld} proves the remainder formula and the torsion
field statement. Section~\ref{sec:gauss} computes the different and
specializes the Gauss--Thakur evaluation theorem.

\section{Divided differences and higher Weil operators}\label{sec:factorialandoperators}

We first describe $A_H$ and the basis $T_k$. We then prove the
divided-difference identities needed for the higher Weil operators.

\subsection{The scalar extension and Riemann--Roch bases}

 Let $\rho\in\Fq[t]$ be monic irreducible of degree $N$, let $P_\rho$ be its closed point on $\mathbf P^1_{\Fq}$, and let $\zeta$ be a fixed root of $\rho$. Put
\begin{align*}
 H=\Fq(\zeta)=\mathbb F_{q^N},
 \qquad
 \A=H^0\bigl(\mathbf P^1_{\Fq}\setminus\{P_\rho\},\mathcal O\bigr),
 \qquad
 A_H=\A\otimes_{\Fq}H.
\end{align*}
Write $\zeta^{(j)}=\zeta^{q^j}$, reading negative exponents
through the inverse Frobenius on $H$, and write $x(t)=(t-\zeta)^{-1}$.
Put $c_m=\zeta-\zeta^{(-m)}$. The sequence $(c_m)$ is $N$-periodic, and
$c_m=0$ if and only if $N\mid m$. Over $H$, the point $P_\rho$ splits
into $N$ points. Write $P_s$ for the point
$t=\zeta^{(-s)}$, $0\le s<N$. Hence
\begin{equation}\label{eq:A_H-local}
 A_H=H\left[\frac1{t-\zeta},\frac1{t-\zeta^{(-1)}},\ldots,\frac1{t-\zeta^{(-(N-1))}}\right]
 =H\left[x,(1+c_1x)^{-1},\ldots,(1+c_{N-1}x)^{-1}\right].
\end{equation}
For $r\ge1$, put $A_{H,r}=A_H^{\tensor_H r}$ and write $x_i=x(t_i)$. The ring $A_{H,r}$ is a localization of $H[x_1,\ldots,x_r]$. It is therefore a domain and a unique factorization domain.

For $s\in\mathbb Z$ and $k\ge0$, the basis functions are
\begin{align*}
 T_k^{(s)}(t)=\prod_{\ell=0}^{k-1}(t-\zeta^{(s-\ell)})^{-1},\qquad
 T_k=T_k^{(0)},
\end{align*}
where an empty product is $1$. In particular $T_1=x$. Cancellation of factors gives
\begin{equation}\label{eq:shift-identity-imrn}
 T_j^{(s-k)}=\frac{T_{j+k}^{(s)}}{T_k^{(s)}},
 \qquad
 T_j^{(-1)}=\frac{T_{j+1}}{T_1}.
\end{equation}
The successive factors of $T_k$ follow the periodic pole sequence $\zeta,\zeta^{(-1)},\zeta^{(-2)},\ldots$. The same products occur in \cite[Lemma~6.22 and Corollary~6.24]{HuHuangYau2026}. Proposition~\ref{prop:factorial-basis-imrn} records their Riemann--Roch basis property.

\begin{proposition}\label{prop:factorial-basis-imrn}
For \(s\in\mathbb Z\) and \(n\ge0\), put
\(D_n^{(s)}=\sum_{\ell=0}^{n-1}[\zeta^{(s-\ell)}]\).
Then $T_0^{(s)},\ldots,T_n^{(s)}$ form an $H$-basis of the
Riemann--Roch space $L(D_n^{(s)})$. Taking $s=0$, every $p\in A_H$ has a unique finite expansion
$p=\sum_{k=0}^{n}\alpha_kT_k, \quad \alpha_k\in H$.
If $p\in\A$ has pole order at most $d$ at $P_\rho$, one may take $n=Nd$.
\end{proposition}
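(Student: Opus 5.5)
Work over $H$, where $P_\rho$ splits into the $N$ rational points $P_s$ ($t=\zeta^{(-s)}$), and read $D_n^{(s)}$ as a divisor on $\mathbf P^1_H$. The points $\zeta^{(s-\ell)}$ for $0\le\ell<n$ run periodically through the $N$ conjugates of $\zeta$. Hence $D_n^{(s)}=\sum_{j}m_j[\zeta^{(j)}]$ with $m_j\ge0$ and $\sum_j m_j=n$. In particular $\deg D_n^{(s)}=n$, so Riemann--Roch on $\mathbf P^1_H$ (genus $0$) gives $\dim_H L(D_n^{(s)})=n+1$.

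The plan has three steps: show each $T_k^{(s)}$ with $k\le n$ lies in $L(D_n^{(s)})$, prove linear independence, and then conclude by counting dimensions.

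\begin{enumerate}[label=(\arabic*)]
\item \emph{Membership.} For $k\le n$, $T_k^{(s)}$ has poles only at $\zeta^{(s-\ell)}$ for $\ell<k$. The pole order at each such point equals the number of occurrences of that point among the first $k$ factors. This is at most its multiplicity in $D_n^{(s)}$. At $t=\infty$, $T_k^{(s)}$ vanishes to order $k\ge0$. Thus $T_k^{(s)}\in L(D_n^{(s)})$.
\item \emph{Independence.} The simplest route is to note that $T_k^{(s)}$ vanishes at $\infty$ to exact order $k$, since it is a product of $k$ factors $(t-a)^{-1}$. Distinct orders at a single point immediately give linear independence.
\item \emph{Counting.} We have $n+1$ independent elements in an $(n+1)$-dimensional space, so they form a basis.
\end{enumerate}

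For the consequences, any $p\in A_H$ is regular away from the $P_s$, so $p\in L\bigl(\sum_s m_s[\zeta^{(-s)}]\bigr)$ for some multiplicities $m_s$. Choose $n$ large, say $n=N\max_s m_s$, so that $D_n^{(0)}$ dominates this divisor; each point then occurs $n/N\ge m_s$ times. Then $p\in L(D_n^{(0)})$, and the basis property gives the expansion $p=\sum_{k\le n}\alpha_kT_k$. Uniqueness holds because the $T_k$ for all $k$ are independent, again by their distinct orders at $\infty$, so the expansion does not depend on the choice of $n$.

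Now let $p\in\A$ have pole order at most $d$ at $P_\rho$. The point $P_\rho$ splits into the $P_s$ with ramification index $1$. So, viewed in $A_H$, $p$ has pole order at most $d$ at each $P_s$ (all conjugate points receive the same order, as $p$ is defined over $\Fq$). The divisor $D_{Nd}^{(0)}$ contains each $P_s$ exactly $d$ times, so $p\in L(D_{Nd}^{(0)})$ and we may take $n=Nd$.

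The main point needing care is the bookkeeping of multiplicities when $n$ exceeds $N$, where repeated factors occur in $T_k$. Step (1) handles this, because the pole count of $T_k^{(s)}$ at each point is exactly the count of that point in $D_k^{(s)}\le D_n^{(s)}$.
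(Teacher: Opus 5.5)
Your proof is correct and follows the same plan as the paper: Riemann--Roch gives $\dim_H L(D_n^{(s)})=n+1$, the $n+1$ functions are linearly independent, a dimension count finishes, and the bound $d(P_0+\cdots+P_{N-1})=D_{Nd}^{(0)}$ on the pole divisor handles $p\in\A$. The only variation is in the independence step: you use the exact vanishing order $k$ of $T_k^{(s)}$ at $P_\infty$, whereas the paper uses the flag $T_k^{(s)}\in L(D_k^{(s)})\setminus L(D_{k-1}^{(s)})$. Both arguments are valid, and yours gives uniqueness for all $k$ at once, which you also spell out for a general $p\in A_H$.
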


\begin{proof}
Since \(\mathbf P^1_H\) has genus \(0\) and \(\deg D_n^{(s)}=n\), Riemann--Roch gives
\(\dim_H L(D_n^{(s)})=n+1\). For \(k\ge1\), the quotient
$\frac{T_k^{(s)}}{T_{k-1}^{(s)}}=\frac{1}{t-\zeta^{(s-k+1)}}$
has one additional simple pole at the new summand of \(D_k^{(s)}-D_{k-1}^{(s)}\). Hence
\(T_k^{(s)}\in L(D_k^{(s)})\setminus L(D_{k-1}^{(s)})\). Induction on \(k\) shows that
\(T_0^{(s)},\ldots,T_n^{(s)}\) are linearly independent. Their number equals
\(\dim_H L(D_n^{(s)})\), so they form a basis. If \(p\in\A\) has pole order at most \(d\) at \(P_\rho\), then after base change its pole divisor is bounded by \(d(P_0+\cdots+P_{N-1})=D_{Nd}^{(0)}\). Indeed, the first \(Nd\) terms of the periodic list \(\zeta,\zeta^{(-1)},\zeta^{(-2)},\ldots\) contain every split point exactly \(d\) times.
\end{proof}

\subsection{Divided differences and the action of \texorpdfstring{$x$}{x}}

This subsection records the divided difference identities in \(x=(t-\zeta)^{-1}\)
and proves that the apparent poles along the diagonal cancel in $A_{H,r}$.

For \(F\in A_H\), set \(F_i=F(t_i)\). Define the divided difference by
\begin{align}
 F[x_1]&=F_1,\notag\\
 F[x_1,\ldots,x_r]
 &=\frac{F[x_1,\ldots,x_{r-1}]-F[x_2,\ldots,x_r]}{x_1-x_r}
 \qquad(r\ge2).\label{eq:dd-rec-imrn}
\end{align}
We first record the divided difference formulas cited in
the operator calculations.

\begin{lemma}\label{lem:dd-basic-imrn}
For \(r\ge1\), the divided difference has the barycentric form
\begin{equation}\label{eq:dd-bary-imrn}
 F[x_1,\ldots,x_r]
 =\sum_{\nu=1}^{r}
 \frac{F_\nu}
 {\prod_{\substack{1\le i\le r\\ i\ne \nu}}(x_\nu-x_i)}.
\end{equation}
If \(F,G\in\Frac(A_H)\), the product formula is
\begin{equation}\label{eq:dd-Leibniz-imrn}
 (FG)[x_1,\ldots,x_r]
 =\sum_{\ell=1}^{r}
 F[x_1,\ldots,x_\ell]G[x_\ell,\ldots,x_r].
\end{equation}
More generally, let \(F_0,\ldots,F_{k-1}\) be functions,
 and put
\(s_\ell=m_0+\cdots+m_{\ell-1}\), \(s_0=0\).
Then the product of \(k\) functions satisfies
\begin{equation}\label{eq:dd-multi-Leibniz-imrn}
 \left(\prod_{\ell=0}^{k-1}F_\ell\right)[x_1,\ldots,x_r]
 =
 \sum_{\substack{m_0,\ldots,m_{k-1}\ge0\\
 m_0+\cdots+m_{k-1}=r-1}}
 \prod_{\ell=0}^{k-1}
 F_\ell[x_{s_\ell+1},\ldots,x_{s_\ell+m_\ell+1}].
\end{equation}
\end{lemma}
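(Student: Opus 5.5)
The plan is to prove the three identities in order, by induction on $r$ using only the recursion \eqref{eq:dd-rec-imrn}. We regard everything as rational functions in the independent quantities $x_1,\ldots,x_r$, with $F_i=F(t_i)$, so the manipulations take place in the fraction field of $A_{H,r}$ and the $x_i$ are pairwise distinct there.

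\emph{Barycentric form.} For \eqref{eq:dd-bary-imrn}, the case $r=1$ is the definition. For the inductive step, assume the formula for $r-1$ points and substitute it into both divided differences on the right of \eqref{eq:dd-rec-imrn}. Then compare the coefficient of each $F_\nu$.
\begin{itemize}[leftmargin=2em]
\item For $\nu=1$, only the first term contributes. Its coefficient is $1/\prod_{i=2}^{r-1}(x_1-x_i)$, and dividing by $x_1-x_r$ gives the claimed coefficient.
\item For $\nu=r$, only the second term contributes, with the sign supplying $x_r-x_1$ in the denominator.
\item For $1<\nu<r$, the coefficient is
\[
\frac{1}{x_1-x_r}\,\frac{1}{\prod_{i\ne 1,\nu,r}(x_\nu-x_i)}\Bigl(\frac1{x_\nu-x_r}-\frac1{x_\nu-x_1}\Bigr),
\]
and the bracket equals $(x_r-x_1)^{-1}\cdot(\text{stuff})$, more precisely $\frac{x_r-x_1}{(x_\nu-x_r)(x_\nu-x_1)}\cdot(-1)$. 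Combining, this gives exactly $1/\prod_{i\ne\nu}(x_\nu-x_i)$.
\end{itemize}
This bookkeeping is routine. A consequence is that $F[x_1,\ldots,x_r]$ is symmetric in its arguments.

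\emph{Leibniz rule.} For \eqref{eq:dd-Leibniz-imrn}, induct on $r$ again; the case $r=1$ is trivial. Write $(FG)[x_1..x_r]$ through the recursion and insert the inductive expansions of $(FG)[x_1..x_{r-1}]$ and $(FG)[x_2..x_r]$. To align the two sums, add and subtract cross terms of the form $F[x_1..x_\ell]G[x_{\ell+1}..x_r]$, that is, telescope. The recursion applied to $F$ and to $G$ separately, which is legitimate by the symmetry just established, then turns each difference into a single divided difference with one more node. An alternative is to check the identity directly from the barycentric form by comparing coefficients of $F_\nu G_\mu$. That comparison requires a partial-fraction identity, so I would use the telescoping induction, which is the standard proof.

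\emph{Multi-factor formula.} For \eqref{eq:dd-multi-Leibniz-imrn}, induct on $k$. The case $k=1$ is the statement $m_0=r-1$, $s_0=0$, and $k=2$ is \eqref{eq:dd-Leibniz-imrn} with $m_0=\ell-1$. For the inductive step, apply \eqref{eq:dd-Leibniz-imrn} to $F_0\cdot(F_1\cdots F_{k-1})$, giving a sum over $m_0$. Then apply the inductive hypothesis to the second factor, which is evaluated on the consecutive nodes $x_{m_0+1},\ldots,x_r$ (that is $r-m_0$ nodes). After re-indexing, the partial sums $s_\ell$ shift by $m_0$, which matches the stated formula exactly.

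The main obstacle is the telescoping in the Leibniz step: choosing which cross terms to insert so that the recursions for $F$ and $G$ apply cleanly. Everything else is index bookkeeping.
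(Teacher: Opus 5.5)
Your proposal is correct and follows essentially the same route as the paper. You prove the barycentric form by induction on $r$ from the recursion \eqref{eq:dd-rec-imrn}, the Leibniz rule from the recursion plus the induction hypothesis with the cross terms $F[x_1,\ldots,x_\ell]G[x_{\ell+1},\ldots,x_r]$, and the $k$-factor formula by iterating Leibniz. In the Leibniz step the recursion applies to both differences directly, so symmetry is not needed, and the interior coefficients $(x_\ell-x_r)+(x_1-x_\ell)$ collapse to $x_1-x_r$.

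One slip to fix: for $1<\nu<r$ the bracket is $\frac{1}{x_\nu-x_1}-\frac{1}{x_\nu-x_r}=\frac{x_1-x_r}{(x_\nu-x_1)(x_\nu-x_r)}$. You wrote it with the opposite sign and then inserted a compensating factor $-1$. Your final coefficient is therefore right, but the intermediate line is not.
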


\begin{proof}
The barycentric identity \eqref{eq:dd-bary-imrn} follows by induction from the recursion \eqref{eq:dd-rec-imrn}. Applying the same recursion to \(FG\), followed by the induction hypothesis, gives \eqref{eq:dd-Leibniz-imrn}. The interior terms cancel pairwise. Repeated use of \eqref{eq:dd-Leibniz-imrn} gives the multiple product formula \eqref{eq:dd-multi-Leibniz-imrn}. The blocks of nodes overlap at their endpoints, which accounts for the indices \(s_\ell\).
\end{proof}

Formula \eqref{eq:dd-bary-imrn} contains apparent diagonal poles.
The next lemma shows that these poles cancel for every function in
\(A_H\).

\begin{lemma}\label{lem:inverse-linear-dd-imrn}
For \(c\in H\) and \(r\ge1\), the divided difference of an inverse linear function is
\begin{equation}\label{eq:inverse-linear-dd-imrn}
 \bigl((1+cx)^{-1}\bigr)[x_1,\ldots,x_r]
 =\frac{(-c)^{r-1}}{\prod_{i=1}^{r}(1+cx_i)}.
\end{equation}
Consequently, for $F\in A_H$ one has
\begin{equation}\label{eq:dd-regular-imrn}
 F[x_1,\ldots,x_r]\in A_{H,r}
 \qquad(F\in A_H).
\end{equation}
\end{lemma}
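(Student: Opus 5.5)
The plan is to prove \eqref{eq:inverse-linear-dd-imrn} by induction on \(r\), and then to deduce \eqref{eq:dd-regular-imrn} from it using the presentation \eqref{eq:A_H-local} and the product formula \eqref{eq:dd-multi-Leibniz-imrn}.

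For the induction, put \(g=(1+cx)^{-1}\). The case \(r=1\) is the definition \(g[x_1]=g_1\). For the step, assume the formula holds for \(r-1\) nodes, applied to the node sets \(x_1,\ldots,x_{r-1}\) and \(x_2,\ldots,x_r\). Insert both into the recursion \eqref{eq:dd-rec-imrn}. The numerator becomes
\[
\frac{(-c)^{r-2}}{\prod_{i=2}^{r-1}(1+cx_i)}\left(\frac{1}{1+cx_1}-\frac1{1+cx_r}\right)
=\frac{(-c)^{r-2}\,c\,(x_r-x_1)}{\prod_{i=1}^{r}(1+cx_i)} .
\]
Dividing by \(x_1-x_r\) gives \((-c)^{r-1}/\prod_{i=1}^r(1+cx_i)\), as claimed. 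The same computation with \(c=0\), or directly from the definition, gives the divided differences of monomials in \(x\): \(x[x_1,\ldots,x_r]\) equals \(x_1\), \(1\), or \(0\) according as \(r=1\), \(r=2\), or \(r\ge3\). More generally \(x^m[x_1,\ldots,x_r]\) is the complete homogeneous polynomial \(h_{m-r+1}(x_1,\ldots,x_r)\), which is a polynomial. In all these cases the divided difference has no diagonal denominator.

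For \eqref{eq:dd-regular-imrn}, use \eqref{eq:A_H-local}. Every \(F\in A_H\) is an \(H\)-linear combination of monomials
\[
x^{a}\prod_{s=1}^{N-1}(1+c_sx)^{-b_s},\qquad a,b_s\ge 0.
\]
The divided difference is \(H\)-linear, as is clear from the barycentric form \eqref{eq:dd-bary-imrn}. It therefore suffices to treat a single such product of factors from the finite list \(x\) and \((1+c_sx)^{-1}\), each factor possibly repeated. Apply \eqref{eq:dd-multi-Leibniz-imrn} to this product. Each term is a product of divided differences of single factors, taken on consecutive blocks of nodes among \(x_1,\ldots,x_r\). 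By the first step each such factor is either a polynomial in the \(x_i\) or \((-c_s)^{m}/\prod_i(1+c_sx_i)\) over its block. Each \(1+c_sx_i\) is a unit in \(A_{H,r}\), since \((1+c_sx(t_i))^{-1}\in A_H\) in the \(i\)-th tensor slot. So every term lies in \(A_{H,r}\), and hence so does the sum.

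The only point needing care is that \eqref{eq:dd-multi-Leibniz-imrn} holds as an identity of rational functions. This follows from Lemma~\ref{lem:dd-basic-imrn} applied in \(\Frac(A_{H,r})\), so no regularity is assumed in advance. I expect no real obstacle: the inductive computation is the main content, and the rest is bookkeeping.
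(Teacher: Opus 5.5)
Your proposal is correct and follows the same route as the paper. You prove the inverse linear formula by induction on $r$ through the recursion \eqref{eq:dd-rec-imrn}, then get regularity from the generators in \eqref{eq:A_H-local}, the polynomial divided differences of $x$, the multiple product rule \eqref{eq:dd-multi-Leibniz-imrn}, and the fact that each $1+c_sx_i$ is a unit of $A_{H,r}$.

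One small slip: setting $c=0$ in the inverse linear formula computes divided differences of the constant $1$, not of $x$. This is harmless, because your direct computation from the definition supplies what is needed.
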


\begin{proof}
\eqref{eq:inverse-linear-dd-imrn} follows by induction from \eqref{eq:dd-rec-imrn}. The presentation of \(A_H\) in \eqref{eq:A_H-local} shows that \(A_H\) is generated by \(x\) and the inverses \((1+c_\ell x)^{-1}\), \(1\le \ell<N\). Divided differences of powers of \(x\) are polynomials. The inverse linear formula \eqref{eq:inverse-linear-dd-imrn}, together with the multiple product rule \eqref{eq:dd-multi-Leibniz-imrn}, proves \eqref{eq:dd-regular-imrn}.
\end{proof}

To make the cancellation of diagonal poles explicit, note the factorization
\( T_k(t)=\prod_{\ell=0}^{k-1}\frac{x(t)}{1+c_\ell x(t)}.
\)
The divided differences of these factors have the closed form
\begin{equation}\label{eq:B-ad-imrn}
 B_{\ell,m}(y_0,\ldots,y_m)=
 \frac{X}{(1+c_\ell X)}[y_0,\ldots,y_m]=
 \begin{cases}
 \displaystyle\frac{y_0}{1+c_\ell y_0},&m=0,\\[2mm]
 \displaystyle\frac{(-c_\ell)^{m-1}}
 {\prod_{j=0}^{m}(1+c_\ell y_j)},&m\ge1.
 \end{cases}
\end{equation}
When \(c_\ell=0\), the second line of \eqref{eq:B-ad-imrn} equals \(1\) for \(m=1\) and \(0\) for \(m\ge2\).

\begin{proposition}[Divided differences of $T_k$]\label{prop:factorial-dd-imrn}
For \(k\ge1\) and \(r\ge1\), the divided difference of \(T_k\) has the finite expansion
\begin{equation}\label{eq:factorial-dd-imrn}
 T_k[x_1,\ldots,x_r]
 =
 \sum_{\substack{m_0,\ldots,m_{k-1}\ge0\\
 m_0+\cdots+m_{k-1}=r-1}}
 \prod_{\ell=0}^{k-1}
 B_{\ell,m_\ell}
 (x_{s_\ell+1},\ldots,x_{s_\ell+m_\ell+1}),
\end{equation}
with the same $s_\ell$ as above.
In particular, the only denominators on the right of
\eqref{eq:factorial-dd-imrn} are products of the units
\(1+c_\ell x_i\).
\end{proposition}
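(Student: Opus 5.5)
The plan is to apply the multiple product rule \eqref{eq:dd-multi-Leibniz-imrn} of Lemma~\ref{lem:dd-basic-imrn} to the factorization
\[
T_k(t)=\prod_{\ell=0}^{k-1}\frac{x(t)}{1+c_\ell x(t)} .
\]
Here the $\ell$-th factor is $F_\ell=X/(1+c_\ell X)$ with $X=x$, and the nodes are $x_1,\ldots,x_r$. Formula \eqref{eq:dd-multi-Leibniz-imrn} writes $T_k[x_1,\ldots,x_r]$ as a sum over compositions $m_0+\cdots+m_{k-1}=r-1$ with $m_\ell\ge0$. Each summand is the product of the block divided differences $F_\ell[x_{s_\ell+1},\ldots,x_{s_\ell+m_\ell+1}]$. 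By definition \eqref{eq:B-ad-imrn}, each block divided difference is exactly $B_{\ell,m_\ell}(x_{s_\ell+1},\ldots,x_{s_\ell+m_\ell+1})$. This gives \eqref{eq:factorial-dd-imrn}, provided the closed form in \eqref{eq:B-ad-imrn} is justified.

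The closed form is a short check. For $m=0$ the divided difference is just evaluation. For $m\ge1$, write
\[
\frac{X}{1+cX}=\frac1c\Bigl(1-(1+cX)^{-1}\Bigr)\qquad (c\ne0).
\]
The constant term contributes nothing at order $m\ge1$. Lemma~\ref{lem:inverse-linear-dd-imrn} gives
\[
\frac{X}{1+cX}[y_0,\ldots,y_m]=-\frac1c\cdot\frac{(-c)^m}{\prod_j(1+cy_j)}=\frac{(-c)^{m-1}}{\prod_j(1+cy_j)}.
\]
For $c=0$ the factor is $X$ itself. Its first divided difference is $1$ and the higher ones vanish, which matches the stated convention $(-c)^{m-1}=1$ at $m=1$, $c=0$.

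For the final assertion, every term $B_{\ell,m}$ has denominator a product of factors $1+c_\ell x_i$. These are units of $A_{H,r}$ by \eqref{eq:A_H-local}: for $c_\ell\ne0$, $1+c_\ell x=(t-\zeta^{(-\ell)})/(t-\zeta)$ is invertible in $A_H$, and for $c_\ell=0$ the factor equals $1$. No factor $x_i-x_j$ appears.

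The only delicate point is bookkeeping. The indices $s_\ell$ in \eqref{eq:dd-multi-Leibniz-imrn} must match the blocks, which overlap at their endpoints, and the formula must hold uniformly when some $c_\ell=0$, namely for $\ell\equiv0\pmod N$. Both are already covered by the cited lemma and the stated convention, so I expect no real obstacle.
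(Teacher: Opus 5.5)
Your proposal is correct and follows the paper's proof essentially step for step. Like the paper, you apply the multiple product rule \eqref{eq:dd-multi-Leibniz-imrn} to the factorization of $T_k$ into the factors $X/(1+c_\ell X)$, derive \eqref{eq:B-ad-imrn} from $c_\ell^{-1}\bigl(1-(1+c_\ell X)^{-1}\bigr)$ and Lemma~\ref{lem:inverse-linear-dd-imrn}, and treat $c_\ell=0$ separately. Your explicit check that $1+c_\ell x=(t-\zeta^{(-\ell)})/(t-\zeta)$ is a unit of $A_H$ is a small addition that the paper leaves implicit.
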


\begin{proof}
For \(m=0\), the divided difference of \(X/(1+c_\ell X)\) is its value at one node. For \(m\ge1\) and \(c_\ell\ne0\), write
$\frac{X}{1+c_\ell X}=c_\ell^{-1}\bigl(1-(1+c_\ell X)^{-1}\bigr)$
and apply \eqref{eq:inverse-linear-dd-imrn}. If \(c_\ell=0\), the factor is \(X\), its first divided difference is \(1\), and all higher divided differences vanish. Formula \eqref{eq:factorial-dd-imrn} now follows from the multiple product identity \eqref{eq:dd-multi-Leibniz-imrn}.
\end{proof}

We next give a recurrence and a chain formula for \(x^sp\) in the coefficients relative to $(T_k)$.

\begin{proposition}\label{prop:x-multiplication-imrn}
Suppose $p=\sum_{j\ge0}\alpha_jT_j$ is a finite sum and write
$x p=\sum_{k\ge0}\beta_kT_k$. Then
\begin{equation}\label{eq:beta-rec-imrn}
\beta_0=0,\quad \beta_1=\alpha_0,\quad
 \beta_{j+1}=\alpha_j+c_{j-1}\beta_j\quad(j\ge1).
\end{equation}
Equivalently,
\begin{equation}\label{eq:beta-closed-imrn}
 \beta_{j+1}=\sum_{\ell=0}^{j}\alpha_\ell\prod_{u=\ell}^{j-1}c_u.
\end{equation}
The resulting expansion is finite.
\end{proposition}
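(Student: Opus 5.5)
The plan is to first compute $xT_j$ in the basis $(T_k)$ for each single $j$, and then sum against the coefficients $\alpha_j$. The key input is a one-step identity obtained by comparing $xT_j$ with $T_{j+1}$. By definition $T_{j+1}=T_j\,(t-\zeta^{(-j)})^{-1}$, so
\begin{equation*}
 xT_j=T_{j+1}\,\frac{t-\zeta^{(-j)}}{t-\zeta}
 =T_{j+1}\bigl(1+(\zeta-\zeta^{(-j)})x\bigr)
 =T_{j+1}+c_j\,xT_{j+1}.
\end{equation*}
Writing $y_j=xT_j$, this reads $y_j=T_{j+1}+c_jy_{j+1}$ for all $j\ge0$.

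Next I would iterate this relation. For $K>j$, induction on $K$ gives
\begin{equation*}
 y_j=\sum_{k=j+1}^{K}\Bigl(\prod_{u=j}^{k-2}c_u\Bigr)T_k+\Bigl(\prod_{u=j}^{K-1}c_u\Bigr)y_K .
\end{equation*}
By the $N$-periodicity of $(c_m)$ and the fact that $c_m=0$ exactly when $N\mid m$, the product $\prod_{u=j}^{K-1}c_u$ vanishes as soon as the range $[j,K-1]$ contains a multiple of $N$. This happens for some $K\le j+N$, so the tail term disappears. Hence $xT_j$ is the finite sum
\begin{equation*}
 xT_j=\sum_{k\ge j+1}\Bigl(\prod_{u=j}^{k-2}c_u\Bigr)T_k,
\end{equation*}
with at most $N$ nonzero terms, and in particular with no $T_0$ term. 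For $j=0$ this gives $xT_0=T_1$ directly, since $c_0=0$, which is consistent with $T_1=x$.

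Multiplying by $\alpha_j$ and summing over $j$ gives $\beta_0=0$ and
\[
\beta_{k}=\sum_{j=0}^{k-1}\alpha_j\prod_{u=j}^{k-2}c_u .
\]
Setting $k=j+1$ yields the closed form \eqref{eq:beta-closed-imrn}. Finiteness follows because $p$ is a finite sum and each $xT_j$ has a finite expansion. By uniqueness of the expansion in Proposition~\ref{prop:factorial-basis-imrn}, these $\beta_k$ are the coefficients of $xp$.

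Finally, the recurrence \eqref{eq:beta-rec-imrn} comes from splitting off the $\ell=j$ term in the closed form, where the product is empty and equals $1$. The remaining terms have the common factor $c_{j-1}$:
\[
\beta_{j+1}=\alpha_j+c_{j-1}\sum_{\ell=0}^{j-1}\alpha_\ell\prod_{u=\ell}^{j-2}c_u=\alpha_j+c_{j-1}\beta_j .
\]
The case $j=0$ gives $\beta_1=\alpha_0$.

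The only delicate point is the termination of the iteration. It rests entirely on the vanishing of $c_m$ at multiples of $N$. When $N=1$ every $c_m$ is zero and the identity reduces to $xT_j=T_{j+1}$.
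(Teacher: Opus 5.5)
Your proof is correct, and it uses the same key identity as the paper in a different order. Your relation $xT_j=T_{j+1}+c_j\,xT_{j+1}$ is the paper's identity $(t-\zeta)T_k=T_{k-1}-c_{k-1}T_k$ multiplied by $x$, with $k=j+1$.

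The paper runs the argument the other way. It takes the expansion of $xp$ as given by Proposition~\ref{prop:factorial-basis-imrn} and gets $\beta_0=0$ by evaluating at $P_\infty$. It then multiplies by $t-\zeta$ and compares coefficients of $T_j$, which gives the recurrence \eqref{eq:beta-rec-imrn} first. Iterating yields \eqref{eq:beta-closed-imrn}, and finiteness follows because every long product contains some $c_{mN}=0$.

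You instead build the expansion of each $xT_j$ directly by telescoping. So the closed form comes first and the recurrence is a corollary. Existence of the expansion and $\beta_0=0$ come out of the construction, and uniqueness is the only input you need from Proposition~\ref{prop:factorial-basis-imrn}.

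Your route also gives the single-basis-element formula directly:
\[
xT_m=\sum_{k=m+1}^{m+N}\Bigl(\prod_{u=m}^{k-2}c_u\Bigr)T_k .
\]
This is exactly the case $s=1$ of Corollary~\ref{cor:iterated-x-imrn}. It also gives the step-length bound used in Proposition~\ref{thm:O-combinatorial-revised}. The paper's coefficient comparison is a bit shorter, but it reaches this per-element statement only after specializing to $p=T_m$.
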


\begin{proof}
Since $x$ and every $T_k$ with $k>0$ vanish at the ordinary point $P_\infty=[1:0]$, evaluation there first gives $\beta_0=0$. The identity
$(t-\zeta)T_k=T_{k-1}-c_{k-1}T_k$
follows from $T_{k-1}=(t-\zeta^{(-(k-1))})T_k$. Multiply the expansion of $xp$ by $t-\zeta$ and compare the coefficient of $T_j$. This gives \eqref{eq:beta-rec-imrn}. Iteration gives
\eqref{eq:beta-closed-imrn}. To prove finiteness, suppose
\(\alpha_\ell=0\) for \(\ell>n\). If \(j\ge n+N\), then every interval
\(\{\ell,\ell+1,\ldots,j-1\}\), with \(0\le\ell\le n\), contains an integer
divisible by \(N\). The corresponding product in
\eqref{eq:beta-closed-imrn} therefore contains a factor \(c_{mN}=0\).
Thus \(\beta_{j+1}=0\) for all \(j\ge n+N\).
\end{proof}

For an explicit iteration, define the chain coefficients
\begin{align}
 C_0(k,m)&=\delta_{k,m},\notag\\
 C_s(k,m)&=
 \sum_{m=i_0<i_1<\cdots<i_s=k}
 \prod_{\ell=1}^{s}
 \prod_{v=i_{\ell-1}}^{i_\ell-2}c_v
 \qquad(s\ge1).\label{eq:Cpath-imrn}
\end{align}
The sum is zero if no such chain exists. Iterating
Proposition~\ref{prop:x-multiplication-imrn} yields the finite chain
expansion required in the determinant entries.

\begin{corollary}\label{cor:iterated-x-imrn}
For $s,m\ge0$ and a finite sum $p=\sum_m\alpha_mT_m$,
\begin{align*}
 x^sT_m&=\sum_{k\ge0}C_s(k,m)T_k,\\
 x^sp&=\sum_{k\ge0}\left(\sum_m C_s(k,m)\alpha_m\right)T_k.
\end{align*}
All sums are finite.
\end{corollary}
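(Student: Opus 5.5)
We prove the first identity by induction on $s$, using Proposition~\ref{prop:x-multiplication-imrn} at each step, and then obtain the second by linearity. For $s=0$ the claim is $T_m=\sum_k\delta_{k,m}T_k$, which is trivial. For $s=1$, apply Proposition~\ref{prop:x-multiplication-imrn} to $p=T_m$, so that $\alpha_j=\delta_{j,m}$. The closed form \eqref{eq:beta-closed-imrn} gives
\[
xT_m=\sum_{k\ge m+1}\Bigl(\prod_{u=m}^{k-2}c_u\Bigr)T_k .
\]
This matches \eqref{eq:Cpath-imrn} with $s=1$: the only chain is $m=i_0<i_1=k$, whose weight is $\prod_{v=m}^{k-2}c_v$. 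The empty product for $k=m+1$ equals $1$, in agreement with $\beta_{m+1}=\alpha_m$. Thus $C_1(k,m)$ is the one-step transfer coefficient, and it vanishes for $k\le m$.

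For the inductive step, assume $x^{s}T_m=\sum_{j}C_{s}(j,m)T_j$. Multiplying by $x$ and applying the $s=1$ case to each $T_j$ gives
\[
x^{s+1}T_m=\sum_k\Bigl(\sum_j C_1(k,j)\,C_{s}(j,m)\Bigr)T_k .
\]
It remains to check the composition law $C_{s+1}(k,m)=\sum_j C_{s}(j,m)\,C_1(k,j)$. This follows directly from \eqref{eq:Cpath-imrn}. A chain $m=i_0<\cdots<i_{s+1}=k$ splits uniquely into a chain of length $s$ ending at $j=i_s$, followed by a single step $j<k$. The chain's weight is the product of the per-step weights $\prod_{v=i_{\ell-1}}^{i_\ell-2}c_v$, so it factors accordingly. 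If no chain exists, both sides are zero.

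The second formula follows from the first. Since $p=\sum_m\alpha_mT_m$ is a finite sum, $H$-linearity of multiplication by $x^s$ gives
\[
x^sp=\sum_m\alpha_m\sum_kC_s(k,m)T_k,
\]
and the finite sum over $m$ may be exchanged with the sum over $k$.

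The main point is finiteness. For fixed $s$ and $m$, we must show that $C_s(k,m)=0$ for all large $k$. I would argue this directly from the finiteness in Proposition~\ref{prop:x-multiplication-imrn}: each application of $x$ to a finite combination of the $T_j$ yields a finite combination, so by induction $x^sT_m$ is a finite combination. Its coefficients are uniquely determined by Proposition~\ref{prop:factorial-basis-imrn}, since the $T_k$ are linearly independent, so only finitely many $C_s(k,m)$ are nonzero.

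A direct combinatorial check also works. If a step satisfies $i_\ell-i_{\ell-1}>N$, then the range $i_{\ell-1},\ldots,i_\ell-2$ contains a multiple of $N$, so the weight contains a factor $c_{aN}=0$. Hence every nonzero chain has all steps at most $N$, which forces $k\le m+sN$. Summing over the finitely many $m$ with $\alpha_m\ne0$ then shows that the expansion of $x^sp$ is also finite.
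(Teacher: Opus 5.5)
Your proof is correct and follows the paper's route. The case $s=1$ is Proposition~\ref{prop:x-multiplication-imrn} applied to a single $T_m$, and iterating amounts to splitting a chain at its last step, which is how the paper obtains \eqref{eq:Cpath-imrn}; your bound $k\le m+sN$, coming from $c_{aN}=0$ on any step longer than $N$, is the finiteness argument the paper gives in the proof of Proposition~\ref{thm:O-combinatorial-revised}.
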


\begin{proof}
The case $s=1$ is Proposition~\ref{prop:x-multiplication-imrn} applied to one basis element. Iteration records a strictly increasing sequence of basis indices. Multiplying the one-step coefficients along that sequence gives \eqref{eq:Cpath-imrn}.
\end{proof}

\subsection{Alternants and universal formulas}

We begin with the alternant definition and compare it with barycentric,
determinantal, and recursive descriptions.

For $F_0,\ldots,F_{r-1}\in A_H$, write
$\Alt(F_0,\ldots,F_{r-1})=\det(F_{i-1}(t_j))_{i,j=1}^r$
and $V_x=\prod_{i<j}(x_j-x_i)$.

The following definition, given over the fraction field and fixing normalization
for all ranks, is shown to be regular in Theorem~\ref{thm:universal-O-imrn}.

\begin{definition}\label{def:split-Weil-imrn}
Set $\OO^{[1]}_{p,\zeta}=1$. For $r\ge2$, the higher Weil operator is the alternant quotient
\begin{equation}\label{eq:O-alt-imrn}
 \OO^{[r]}_{p,\zeta}(t_1,\ldots,t_r)
 =\frac{\Alt(1,p,xp,\ldots,x^{r-2}p)}{V_x}
 \in\Frac(A_{H,r}).
\end{equation}
When $\zeta$ is fixed, we write $\OO_p^{[r]}$.
\end{definition}

For every $s\ge0$, the divided difference
$(x^sp)[x_1,\ldots,x_r]$ is regular by
Lemma~\ref{lem:inverse-linear-dd-imrn}.
We now compare the alternant, barycentric, determinant, and recursive descriptions.

The following theorem gives the identities used in Theorem~A. Its polynomial case is \cite[Corollary~2.22 and Theorem~2.23]{HuOuYang2026}. The proof below applies directly to rational $p\in A_H$ and proves regularity in $A_{H,r}$.

\begin{theorem}\label{thm:universal-O-imrn}
For $r\ge2$, the operator $\OO_p^{[r]}$ lies in $A_{H,r}$ and is symmetric. It has the following three forms.
\begin{align}
 \OO_p^{[r]}
 &=\sum_{\nu=1}^{r}\prod_{\substack{1\le i\le r\\ i\ne \nu}}\frac{p(t_i)}{x_i-x_\nu}\label{eq:O-bary-imrn}\\
 &=\det\left((x^{r-2+j-i}p)[x_1,\ldots,x_r]\right)_{1\le i,j\le r-1}\label{eq:O-Toeplitz-imrn}\\
 &=\frac{p(t_1)\OO_p^{[r-1]}(t_2,\ldots,t_r)
 -p(t_2)\OO_p^{[r-1]}(t_1,t_3,\ldots,t_r)}{x_1-x_2}.
 \label{eq:O-rec-imrn}
\end{align}
The recursion starts with
$\OO_p^{[2]}(t_1,t_2)=(p(t_1)-p(t_2))/(x_1-x_2)$.
\end{theorem}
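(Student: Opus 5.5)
Throughout I will regard everything as a function of the independent variables $x_1,\ldots,x_r$, working in $\Frac(A_{H,r})$, and treat $p_i=p(t_i)$ as the values of a function $F=p$ of $x$; by \eqref{eq:A_H-local} every element of $A_H$ is a rational function of $x$ alone, so this is legitimate. The plan is to establish the barycentric form \eqref{eq:O-bary-imrn} directly from the alternant, then derive the recursion \eqref{eq:O-rec-imrn} from it, and finally obtain the determinant form \eqref{eq:O-Toeplitz-imrn}. The determinant form gives regularity, since each entry $(x^sp)[x_1,\ldots,x_r]$ lies in $A_{H,r}$ by Lemma~\ref{lem:inverse-linear-dd-imrn}. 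Symmetry will be read off from the barycentric form, which is visibly invariant under permuting the indices (equivalently, both $\Alt$ and $V_x$ are alternating).

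For \eqref{eq:O-bary-imrn}, I expand the alternant $\det(1;\,p_j;\,x_jp_j;\ldots;x_j^{r-2}p_j)$ along its first row of ones. The cofactor of column $\nu$ is $\prod_{j\ne\nu}p_j$ times a Vandermonde in the $x_j$ with $j\ne\nu$. Dividing by $V_x$ leaves, up to a sign, $\prod_{i\ne\nu}p_i/\prod_{i\ne\nu}(x_i-x_\nu)$, and the sign bookkeeping should cancel exactly, as in Lagrange interpolation. Concretely, $V_x/V_{x,\hat\nu}=\pm\prod_{i\ne\nu}(x_i-x_\nu)$, with sign $(-1)^{\nu-1}$ matching the cofactor sign. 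I will check this on $r=2$, where it yields $(p_1-p_2)/(x_1-x_2)$, the stated starting value.

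For the recursion, I split the barycentric sum for $\OO^{[r]}$ according to $\nu$. The terms with $\nu\ge3$ combine through
\[
 \frac{p_1}{x_1-x_\nu}\cdot\frac{1}{x_1-x_2}-\frac{p_2}{x_2-x_\nu}\cdot\frac{1}{x_1-x_2}
\]
type identities. It is cleaner, though, to verify the identity as rational functions: multiply through by $x_1-x_2$, substitute the barycentric formulas for $\OO^{[r-1]}(t_2,\ldots)$ and $\OO^{[r-1]}(t_1,t_3,\ldots)$, and compare the coefficients of $\prod_{i\ne\nu}p_i$ for each $\nu$. Each comparison is a partial-fraction identity in three variables $x_1,x_2,x_\nu$. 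The cases $\nu=1,2$ come straight from the definitions, and the case $\nu\ge3$ reduces to $\frac{1}{(x_1-x_\nu)}-\frac{1}{(x_2-x_\nu)}=\frac{x_2-x_1}{(x_1-x_\nu)(x_2-x_\nu)}$.

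The main obstacle is the determinant form \eqref{eq:O-Toeplitz-imrn}. My first approach is to perform column operations on the alternant to produce divided differences: replacing column $j$ by successive differences divided by $x_j-x_1$ (Newton's scheme) turns the Vandermonde-type denominators into divided differences of each row function. After factoring out $V_x$, each row $x^sp$ becomes $(x^sp)[x_1],(x^sp)[x_1,x_2],\ldots,(x^sp)[x_1,\ldots,x_r]$, and the row of ones becomes $(1,0,\ldots,0)$. This identifies $\OO^{[r]}$ with the $(r-1)\times(r-1)$ determinant $\det\bigl((x^{i-1}p)[x_1,\ldots,x_{j+1}]\bigr)$, which has mixed node sets. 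Converting it into the Toeplitz shape, where all entries carry the full node set $x_1,\ldots,x_r$ and the exponent is $r-2+j-i$, requires a further unimodular row transformation. The ingredient is the shift identity $(xF)[x_1,\ldots,x_m]=x_mF[x_1,\ldots,x_m]+F[x_1,\ldots,x_{m-1}]$, a special case of \eqref{eq:dd-Leibniz-imrn}, used repeatedly to raise node counts at the expense of powers of $x$. Alternatively I would prove \eqref{eq:O-Toeplitz-imrn} by induction on $r$ using \eqref{eq:O-rec-imrn}, expanding the Toeplitz determinant along its last column and applying the divided-difference recursion \eqref{eq:dd-rec-imrn} entrywise. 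I expect this matching of determinant manipulations to be the delicate bookkeeping step, and I would first verify it for $r=3$, where it reads $\OO^{[3]}=(xp)[x_1,x_2,x_3]^2-(x^2p)[x_1,x_2,x_3]\,p[x_1,x_2,x_3]$.
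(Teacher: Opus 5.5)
Your barycentric expansion, your derivation of \eqref{eq:O-rec-imrn} by comparing coefficients of $\prod_{i\ne\nu}p(t_i)$, and your symmetry argument are the same as the paper's. The sign $(-1)^{\nu-1}$ of $V_x/V_{x,\hat\nu}$ does cancel the cofactor sign, as you say.

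\textbf{The gap: the Toeplitz determinant.} This is where you diverge from the paper, and as written the step is only announced, not proved. Your Newton elimination is correct. It gives $\OO_p^{[r]}=\det M$ with $M_{i,j}=(x^{i-1}p)[x_1,\ldots,x_{j+1}]$. This already proves regularity, since every entry lies in $A_{H,r}$ by Lemma~\ref{lem:inverse-linear-dd-imrn}.

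The passage from $M$ to the Toeplitz matrix cannot be a row transformation of $M$. Each column of $M$ has a fixed node set, so row operations never raise node counts. What works is a sequence of column operations. Write the shift identity as
\[
 F[x_1,\ldots,x_{m-1}]+x_mF[x_1,\ldots,x_m]=(xF)[x_1,\ldots,x_m].
\]
For $k=1,\ldots,r-2$, perform the sweep $C_j\leftarrow C_j+x_{j+k+1}C_{j+1}$ for $j=1,\ldots,r-1-k$, from left to right, so that each update uses the not yet updated $C_{j+1}$.

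By induction on $k$, after the $k$-th sweep, column $j\le r-1-k$ has entries $(x^{i-1+k}p)[x_1,\ldots,x_{j+k+1}]$. The remaining columns already carry all $r$ nodes. At the end, column $j$ has entries $(x^{r-2+i-j}p)[x_1,\ldots,x_r]$. This is the transpose of the matrix in \eqref{eq:O-Toeplitz-imrn}, and every operation was unimodular. With this step added, your route is complete, and the inductive alternative via the recursion is unnecessary.

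\textbf{Comparison with the paper.} The paper skips both the Newton elimination and the sweeps. It inserts the barycentric formula \eqref{eq:dd-bary-imrn} into each Toeplitz entry:
\[
 (x^{r-2+j-i}p)[x_1,\ldots,x_r]=\sum_\nu x_\nu^{r-1-i}\,w_\nu p(t_\nu)\,x_\nu^{j-1},
 \qquad w_\nu=\prod_{i\ne\nu}(x_\nu-x_i)^{-1}.
\]
It then factors the matrix as $ADB$, with Vandermonde-type outer factors and a diagonal middle factor, and applies Cauchy--Binet. The $r$ maximal minors are indexed by the complements of single indices. In each, the squared Vandermonde cancels against the weights, leaving exactly the $\nu$-th barycentric summand.

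The paper's argument links \eqref{eq:O-Toeplitz-imrn} directly to \eqref{eq:O-bary-imrn}, and its signs are easy to track. Yours is more elementary. It passes through the mixed-node determinant $M$ and obtains regularity before the Toeplitz shape is reached.
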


\begin{proof}
Expand the alternant in \eqref{eq:O-alt-imrn} along the first row. The cofactor obtained by deleting the $\nu$-th column is
\begin{align*}
 (-1)^{1+\nu}\left(\prod_{\substack{1\le i\le r\\i\ne\nu}}p(t_i)\right)
 V_x(t_1,\ldots,\widehat{t_\nu},\ldots,t_r).
\end{align*}
Dividing by $V_x$ gives \eqref{eq:O-bary-imrn}.

For \eqref{eq:O-Toeplitz-imrn}, put $m=r-1$ and
$w_\nu=\prod_{i\ne\nu}(x_\nu-x_i)^{-1}$.
The barycentric formula for divided differences \eqref{eq:dd-bary-imrn} gives
\begin{align*}
 (x^{r-2+j-i}p)[x_1,\ldots,x_r]
 =\sum_{\nu=1}^{r}x_\nu^{m-i}(w_\nu p(t_\nu))x_\nu^{j-1}.
\end{align*}
Thus the matrix in \eqref{eq:O-Toeplitz-imrn} factors as $ADB$, where
\begin{align*}
 A_{i,\nu}=x_\nu^{m-i},\qquad
 D=\operatorname{diag}(w_1p(t_1),\ldots,w_rp(t_r)),\qquad
 B_{\nu,j}=x_\nu^{j-1}.
\end{align*}
Cauchy--Binet is indexed by the complements of one element. The term indexed by the complement of $\nu$ is
\begin{align*}
 \frac{\prod_{\substack{1\le i\le r\\ i\ne\nu}}p(t_i)}{\prod_{\substack{1\le i\le r\\ i\ne\nu}}(x_i-x_\nu)}.
\end{align*}
Its sum is \eqref{eq:O-bary-imrn}. This proves \eqref{eq:O-Toeplitz-imrn}. Since every entry of the determinant lies in $A_{H,r}$, the operator is regular. Symmetry follows because both the alternant and $V_x$ change by the same sign under a permutation of the variables.

Finally, insert the barycentric identity \eqref{eq:O-bary-imrn} for the two operators of rank \(r-1\) into the right hand side of \eqref{eq:O-rec-imrn}. The terms omitting $1$ or $2$ give the corresponding terms of \eqref{eq:O-bary-imrn}. For an omitted index $\nu\ge3$, the coefficient simplifies as
\begin{align*}
 \frac1{x_1-x_2}\left(\frac1{x_2-x_\nu}-\frac1{x_1-x_\nu}\right)
 =\frac1{(x_1-x_\nu)(x_2-x_\nu)}.
\end{align*}
This gives the remaining barycentric terms.
\end{proof}

\subsection{Explicit formula and reduction modulo \texorpdfstring{$p$}{p}}

This subsection works with the basis functions. The
divided-difference kernel
\begin{equation}\label{eq:K-p-def}
 K_p(u,v)=\frac{p(u)-p(v)}{x(u)-x(v)}
\end{equation}
is used throughout. We first put the explicit formula
\eqref{eq:factorial-dd-imrn} into the determinant and record the resulting
finite expression. For each chosen index $\nu$, we compare the higher tensor with products of values of $K_p$ modulo $p(t_\nu)$. All these products have the same image in the quotient by $p(t_1),\ldots,p(t_r)$.

For $p=\sum_{m=0}^{n}\alpha_mT_m$ and $s\ge0$, use the coefficients
$C_s(k,m)$ of \eqref{eq:Cpath-imrn} and the functions $B_{\ell,m}$ of
\eqref{eq:B-ad-imrn}. The following formula computes the entries of
\eqref{eq:O-Toeplitz-imrn} without additional notation.

\begin{proposition}[Finite combinatorial formula]
\label{thm:O-combinatorial-revised}
For $r\ge2$ and $s\ge0$,
\begin{equation*}
 \begin{split}
 (x^sp)[x_1,\ldots,x_r]
 ={}&\sum_{m=0}^{n}\alpha_m\sum_{k\ge0}C_s(k,m)\\
 &\quad\cdot\sum_{\substack{m_0,\ldots,m_{k-1}\ge0\\
                  m_0+\cdots+m_{k-1}=r-1}}
 \prod_{\ell=0}^{k-1}
 B_{\ell,m_\ell}
 (x_{s_\ell+1},\ldots,x_{s_\ell+m_\ell+1}),
 \end{split}
\end{equation*}
where $s_\ell=m_0+\cdots+m_{\ell-1}$.
Substitution in \eqref{eq:O-Toeplitz-imrn} gives a finite formula for
$\OO_p^{[r]}$. All denominators are products of the units
$1+c_\ell x_i$.
\end{proposition}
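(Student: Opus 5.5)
The plan is to combine three earlier results. The divided difference $F\mapsto F[x_1,\ldots,x_r]$ is $H$-linear, which is immediate from the recursion \eqref{eq:dd-rec-imrn} or from the barycentric form \eqref{eq:dd-bary-imrn}. So I will first expand $x^sp$ in the basis $(T_k)$. Then I will apply the divided difference term by term, and finally insert the closed form for each $T_k[x_1,\ldots,x_r]$.

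Concretely, write $p=\sum_{m=0}^n\alpha_mT_m$ using Proposition~\ref{prop:factorial-basis-imrn}. Corollary~\ref{cor:iterated-x-imrn} then gives
\[
x^sp=\sum_{k\ge0}\Bigl(\sum_{m=0}^n C_s(k,m)\alpha_m\Bigr)T_k,
\]
and this sum is finite. Finiteness is the point that needs care, and it comes from iterating the argument at the end of the proof of Proposition~\ref{prop:x-multiplication-imrn}. For fixed $m$, each multiplication by $x$ raises the top index by at most $N$, so $C_s(k,m)=0$ once $k>m+sN$. Concretely, any chain step of length greater than $N$ contains a factor $c_{jN}=0$. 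Hence only finitely many $k$ contribute, and we may exchange the order of summation and apply linearity:
\[
(x^sp)[x_1,\ldots,x_r]=\sum_{m=0}^n\alpha_m\sum_{k\ge0}C_s(k,m)\,T_k[x_1,\ldots,x_r].
\]

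Next I substitute \eqref{eq:factorial-dd-imrn} from Proposition~\ref{prop:factorial-dd-imrn} for each $k\ge1$. For $k=0$, $T_0=1$ is constant, so its divided difference vanishes for $r\ge2$. This matches the empty sum in the displayed formula, because with $k=0$ there are no $m_\ell$ summing to $r-1\ge1$. The result is exactly the asserted formula. The inner sum over compositions $(m_0,\ldots,m_{k-1})$ of $r-1$ is finite, so the whole expression is a finite sum.

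For the final assertions, I substitute these entries, with $s=r-2+j-i$, into the determinant \eqref{eq:O-Toeplitz-imrn} of Theorem~\ref{thm:universal-O-imrn}. This gives a finite formula for $\OO_p^{[r]}$. By \eqref{eq:B-ad-imrn}, every $B_{\ell,m}$ has a denominator that is a product of factors $1+c_\ell x_i$, and these are units of $A_{H,r}$. Sums and products of such terms keep the same kind of denominators. I expect the only delicate step to be the finiteness of the $k$-sum, and the chain-of-$c$'s argument above handles it.
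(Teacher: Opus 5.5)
Your proposal is correct and follows essentially the same route as the paper. You expand $x^sp$ via Corollary~\ref{cor:iterated-x-imrn}, apply the divided difference termwise and insert \eqref{eq:factorial-dd-imrn}, and treat $k=0$ as the empty sum. Finiteness comes from $C_s(k,m)=0$ for $k>m+sN$, since a chain step longer than $N$ picks up some $c_{jN}=0$, and the denominators are read off from \eqref{eq:B-ad-imrn}.
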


\begin{proof}
By Corollary~\ref{cor:iterated-x-imrn},
$x^sp=\sum_m\alpha_m\sum_k C_s(k,m)T_k$.
Apply the $r$-point divided difference and then
\eqref{eq:factorial-dd-imrn}. For $k=0$, the innermost sum is empty
when $r\ge2$, in agreement with the vanishing divided difference of
$T_0=1$. Every nonzero step in \eqref{eq:Cpath-imrn} has length at most
$N$, since a longer step contains a factor $c_{jN}=0$.
Thus $C_s(k,m)=0$ for $k>m+sN$, so the sums are finite.
Formula~\eqref{eq:B-ad-imrn} gives the assertion about denominators.
\end{proof}

For $r=3$, the correction term is a second divided difference. The difference
between the product $K_p(t_1,t_3)K_p(t_2,t_3)$ and the higher Weil
operator is
\begin{align*}
 K_p(t_1,t_3)K_p(t_2,t_3)-\OO_p^{[3]}
 =
 p(t_3)\,p[x_1,x_2,x_3].
\end{align*}

The next proposition gives a congruence modulo $p(t_\nu)$ for every chosen index $\nu$. In the full quotient $(A_H/(p))^{\otimes_H r}$, every such product of values of $K_p$ has the same image as $\OO_p^{[r]}$. This is the comparison used in the residue construction of Section~\ref{sec:pairing}.

\begin{proposition}[Congruence at a chosen index]
\label{prop:anchored-congruence-imrn}
Let $r\ge2$. For every \(\nu\in\{1,\ldots,r\}\), there exists
\(R_{p,\nu}^{[r]}\in A_{H,r}\) with
\begin{equation}\label{eq:anchored-exact-imrn}
 \OO_p^{[r]}
 =\prod_{\substack{1\le i\le r\\ i\ne\nu}}K_p(t_i,t_\nu)-p(t_\nu)R_{p,\nu}^{[r]}.
\end{equation}
In particular, one has the congruence
\begin{equation*}
 \OO_p^{[r]}\equiv\prod_{\substack{1\le i\le r\\ i\ne\nu}}K_p(t_i,t_\nu)\pmod{p(t_\nu)A_{H,r}}.
\end{equation*}
\end{proposition}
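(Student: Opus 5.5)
By symmetry of $\OO_p^{[r]}$ (Theorem~\ref{thm:universal-O-imrn}) and of the product $\prod_{i\ne\nu}K_p(t_i,t_\nu)$ under permutations fixing $\nu$, it suffices to treat $\nu=r$. The plan is to show that the difference
\begin{equation*}
 \Delta=\prod_{i<r}K_p(t_i,t_r)-\OO_p^{[r]}
\end{equation*}
lies in $A_{H,r}$ and vanishes on the hypersurface $p(t_r)=0$ in a form divisible by $p(t_r)$ inside $A_{H,r}$. Regularity of $\Delta$ is immediate. Each $K_p(t_i,t_r)=p[x_i,x_r]$ lies in $A_{H,2}\subset A_{H,r}$ by Lemma~\ref{lem:inverse-linear-dd-imrn}, and $\OO_p^{[r]}\in A_{H,r}$ by Theorem~\ref{thm:universal-O-imrn}.

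For divisibility I would use the barycentric form \eqref{eq:O-bary-imrn} with the index $\nu=r$ separated:
\begin{equation*}
 \OO_p^{[r]}=\prod_{i<r}\frac{p(t_i)}{x_i-x_r}+\sum_{\mu<r}\prod_{i\ne\mu}\frac{p(t_i)}{x_i-x_\mu}.
\end{equation*}
Every term with $\mu<r$ contains the factor $p(t_r)$. Expanding $K_p(t_i,t_r)=(p(t_i)-p(t_r))/(x_i-x_r)$, the product $\prod_{i<r}K_p(t_i,t_r)$ equals the $\mu=r$ term plus terms each divisible by $p(t_r)$. So, formally, $\Delta=p(t_r)\cdot G$ with $G\in\Frac(A_{H,r})$.

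The real issue, and the main obstacle, is that this $G$ has apparent diagonal denominators $x_i-x_j$. One must show $G\in A_{H,r}$. I would try two routes.

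The first route is an explicit formula for $G$ by induction on $r$, using the recursion \eqref{eq:O-rec-imrn}. For $r=2$ we have $\Delta=0$. For $r=3$ the paper records $\Delta=p(t_3)\,p[x_1,x_2,x_3]$. For general $r$, write $\OO_p^{[r]}$ via \eqref{eq:O-rec-imrn}, apply the inductive hypothesis to both rank-$(r-1)$ operators (anchored at $t_r$), and rearrange. This should produce $R^{[r]}_{p,r}$ as $\bigl(p(t_1)R'(t_2,\dots)-p(t_2)R'(t_1,t_3,\dots)\bigr)/(x_1-x_2)$ plus a correction. That quotient is a divided difference in the first slot of a function regular in $t_1$, so it is regular by Lemma~\ref{lem:inverse-linear-dd-imrn} applied in one variable.

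The second route, which I find cleaner, is a ring-theoretic argument. Since $A_{H,r}$ is a UFD, $\Delta\in A_{H,r}$, and the only possible denominators of $G$ are products of the $x_i-x_j$, it is enough to show that $p(t_r)$ is coprime to each $x_i-x_j$ in $A_{H,r}$. Then $p(t_r)\mid\Delta\cdot\prod(x_i-x_j)^{e}$ forces $p(t_r)\mid\Delta$. For factors $x_i-x_j$ with $i,j<r$, coprimality holds because these do not involve $t_r$ while $p(t_r)$ is nonconstant in $t_r$ alone. For $x_i-x_r$, a common prime factor would have to divide $p(t_r)$ and also $p(t_i)$, since $t_i=t_r$ on that locus. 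Here I would use instead that $x_i-x_r$ is prime and check directly that $\Delta\cdot\prod_{j\ne i}(\cdots)$ has no pole along $x_i=x_r$. That holds because $\Delta$ is regular there.

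Concretely, I would write $\Delta=p(t_r)G$ with $G=N/D$, where $D$ is a product of primes $x_i-x_j$. For each such prime $\pi$ I would argue as follows. If $\pi\nmid p(t_r)$, then $\pi\mid D$ and $\pi^e\mid p(t_r)N$ give $\pi^e\mid N$, so the factor cancels. If $\pi\mid p(t_r)$, this is impossible, since $p(t_r)$ depends only on $t_r$ while $\pi=x_i-x_r$ involves $t_i$ nontrivially. Hence $G\in A_{H,r}$.
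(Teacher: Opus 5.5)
Your second route is correct and is essentially the paper's proof. The paper likewise writes $V_x\bigl(\prod_{i<r}K_p(t_i,t_r)-\OO_p^{[r]}\bigr)=p(t_r)M$ with an explicit $M\in A_{H,r}$ (your $N$, with $D=V_x$), and it concludes $V_x\mid M$ in the UFD $A_{H,r}$ because $p(t_r)$ has nonzero image in each domain $A_{H,r}/(x_j-x_i)$. That is exactly your coprimality claim, and it is justified most cleanly by specializing $t_i\mapsto t_j$, rather than by asking which variables are ``involved''. The one omission is the case $p=0$: there the coprimality fails, but the statement holds trivially with $R_{p,\nu}^{[r]}=0$.
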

\begin{proof}
If $p=0$, both the tensor and the displayed product vanish, and we take $R_{p,\nu}^{[r]}=0$. Assume $p\ne0$. It is enough to take $\nu=r$. Put
\begin{align*}
 P=\prod_{i=1}^{r-1}K_p(t_i,t_r),
 \qquad
 V'=\prod_{1\le i<j\le r-1}(x_j-x_i),
 \qquad
 A=\Alt(1,p,xp,\ldots,x^{r-2}p).
\end{align*}
Let
$A_0=(-1)^{r-1}V'\prod_{i=1}^{r-1}p(t_i)$.
Both
\begin{align*}
 \frac{\prod_{i<r}(p(t_i)-p(t_r))-\prod_{i<r}p(t_i)}{p(t_r)}
 \quad\text{and}\quad
 \frac{A-A_0}{p(t_r)}
\end{align*}
belong to $A_{H,r}$. Define
\begin{align*}
 M=(-1)^{r-1}V'
 \frac{\prod_{i<r}(p(t_i)-p(t_r))-\prod_{i<r}p(t_i)}{p(t_r)}
 -\frac{A-A_0}{p(t_r)}.
\end{align*}
A direct multiplication gives
\begin{equation}\label{eq:VPM-imrn}
 V_x(P-\OO_p^{[r]})=p(t_r)M.
\end{equation}
Fix $i<j$ and pass to $A_{H,r}/(x_j-x_i)$. This quotient is a domain. There $p(t_j)=p(t_i)$ and the alternant has two equal columns. Hence both sides that define $V_xP-A$ vanish, and \eqref{eq:VPM-imrn} gives $p(t_r)M=0$. The image of $p(t_r)$ is nonzero, so $M=0$ in the quotient. Thus every prime factor $x_j-x_i$ of $V_x$ divides $M$. Since $A_{H,r}$ is a UFD, $V_x\mid M$. Write $M=V_xR_{p,r}^{[r]}$ in \eqref{eq:VPM-imrn}. This proves \eqref{eq:anchored-exact-imrn} for $\nu=r$. Symmetry gives every other choice of index $\nu$.
\end{proof}

\section{Residue duality, motives and the Weil pairing}\label{sec:motive-Weil}

Let $X/\Fq$ be a smooth projective geometrically integral curve and $\infty$ a closed point. Put
\begin{align*}
 A_X=H^0(X\setminus\{\infty\},\mathcal O_X),\qquad
 \Omega_X=\Omega_{A_X}.
\end{align*}
The ring $A_X$ is Dedekind. We recall residue duality and its Casimir tensor, then describe the torsion and the finite motive of a Drinfeld module over an $A_X$-field, and construct the Weil pairing with values in the differential module. A geometric realization of finite residue Casimir tensors is given in \cite{HuYu2026}.

\subsection{Residue duality and its Casimir tensor}

For a closed point $v$, write $\operatorname{Res}_v=\Tr_{\kappa(v)/\Fq}\circ\operatorname{res}_v$. The global residue theorem gives
\begin{equation*}
 \sum_{v\in X^{(1)}}\operatorname{Res}_v(\eta)=0
 \qquad(\eta\in\Omega_{\Frac(A_X)/\Fq}).
\end{equation*}
For a nonzero proper ideal $\aideal\subset A_X$, put
$B_{\aideal}=A_X/\aideal$ and
$\Omega_{\aideal}=\aideal^{-1}\Omega_X/\Omega_X$.

\begin{definition}[Residue pairing]\label{def:general-residue-pairing}
For \(\eta\in\aideal^{-1}\Omega_X\) and \(b\in A_X\), the residue pairing is
\begin{equation}\label{eq:general-residue-pairing}
 \langle\bar\eta,\bar b\rangle_{\aideal}
 =\operatorname{Res}_{\infty}(b\eta).
\end{equation}
\end{definition}

The following classical duality is used in van der Heiden's determinant construction \cite{vanderHeiden2004}.

\begin{proposition}[Residue duality]\label{thm:general-residue-duality}
The pairing \eqref{eq:general-residue-pairing} is well defined,
\(A_X\)-balanced, and perfect. Equivalently,
\begin{align*}
 \Omega_{\aideal}\xrightarrow{\ \sim\ }
 \Hom_{\Fq}(B_{\aideal},\Fq),
 \qquad
 \bar\eta\longmapsto
 \bigl(\bar b\mapsto\langle\bar\eta,\bar b\rangle_{\aideal}\bigr),
\end{align*}
is an isomorphism of \(B_{\aideal}\)-modules. Moreover,
\begin{equation*}
 \Omega_{\aideal}
 \simeq
 (\aideal^{-1}\Omega_X)\otimes_{A_X}B_{\aideal}.
\end{equation*}
Consequently, \(\Omega_{\aideal}\) is an invertible
\(B_{\aideal}\)-module. Since \(B_{\aideal}\) is Artinian
semilocal, this module is free of rank one, but it has no preferred
generator in general.
\end{proposition}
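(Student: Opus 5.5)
The plan is to reduce everything to local residue duality at the finitely many primes dividing $\aideal$, combined with the global residue theorem. First, well-definedness. If $b\in\aideal$, then $b\eta\in\Omega_X$ is regular on $X\setminus\{\infty\}$. The residue theorem then gives $\operatorname{Res}_\infty(b\eta)=-\sum_{v\ne\infty}\operatorname{Res}_v(b\eta)=0$. Likewise, if $\eta\in\Omega_X$ and $b\in A_X$, then $b\eta$ is regular away from $\infty$, so the pairing vanishes. Balancedness, $\langle a\bar\eta,\bar b\rangle=\langle\bar\eta,a\bar b\rangle$ for $a\in A_X$, is immediate from the definition. Hence the map $\Omega_\aideal\to\Hom_{\Fq}(B_\aideal,\Fq)$ is $B_\aideal$-linear, where the target carries the module structure $(a\cdot f)(b)=f(ab)$. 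The same residue-theorem argument also lets us rewrite
\[
\operatorname{Res}_\infty(b\eta)=-\sum_{v\mid\aideal}\operatorname{Res}_v(b\eta).
\]

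Next, perfectness. Since $A_X$ is Dedekind, we factor $\aideal=\prod\pideal_v^{e_v}$. By the Chinese remainder theorem, $B_\aideal=\prod_v \OO_v/\pideal_v^{e_v}$, and similarly
\[
\Omega_\aideal\simeq\bigoplus_v \pideal_v^{-e_v}\Omega_{\OO_v}/\Omega_{\OO_v},
\]
using the completions at each $v$. The local pieces are then paired through $-\operatorname{Res}_v$. It therefore suffices to prove perfectness locally. We choose a uniformizer $\pi$ at $v$ and identify $\widehat\OO_v\simeq\kappa(v)[[\pi]]$. The pairing $\pi^{-e}\kappa(v)[[\pi]]d\pi/\kappa(v)[[\pi]]d\pi\times\kappa(v)[[\pi]]/\pi^e$ sends $(\pi^{-i-1}d\pi\cdot c,\ \pi^j c')$ to $\Tr_{\kappa(v)/\Fq}(cc')$ when $i=j$ and to $0$ when $j>i$. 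It is therefore block triangular with nondegenerate diagonal blocks, because the trace form of a separable extension of finite fields is nondegenerate. As a cross-check, the two sides have the same $\Fq$-dimension $\dim_{\Fq}B_\aideal$, so injectivity alone already suffices. The point to handle carefully is the passage from global elements of $\aideal^{-1}\Omega_X$ to local ones, i.e.\ the surjectivity of the global-to-local map. This follows from the approximation theorem, or equivalently from CRT applied to $\aideal^{-1}\Omega_X/\Omega_X$, which is a torsion module supported at the primes dividing $\aideal$.

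For the last assertions, $\aideal^{-1}\Omega_X$ is an invertible $A_X$-module, since $\Omega_X$ is a line bundle on a smooth affine curve. The exact sequence $0\to\Omega_X\to\aideal^{-1}\Omega_X\to\Omega_\aideal\to0$ and the identity $\Omega_X=\aideal\cdot\aideal^{-1}\Omega_X$ together give $\Omega_\aideal\simeq(\aideal^{-1}\Omega_X)\otimes_{A_X}A_X/\aideal$. An invertible module over the semilocal ring $B_\aideal$ is free of rank one, because it is locally free of rank one and one can lift generators at the finitely many maximal ideals by CRT. No canonical generator exists because the generator depends on the choice of local uniformizers. The main obstacle is the local perfectness computation, specifically keeping track of the trace and the sign coming from the residue theorem. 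The module-theoretic statements are routine.
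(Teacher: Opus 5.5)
Your proposal is correct and follows essentially the same route as the paper: the global residue theorem gives well-definedness and balancing, completion at the primes dividing $\aideal$ together with the monomial residue computation and nondegeneracy of the trace form gives local perfectness, the Chinese remainder theorem glues the local pieces, and $\aideal\cdot\aideal^{-1}\Omega_X=\Omega_X$ yields the tensor description and hence freeness of rank one. One small sharpening: your local matrix is in fact diagonal rather than merely triangular, since $\operatorname{res}_v(\pi^{j-i-1}\,d\pi)=0$ for every $j\neq i$, but triangularity is already enough for your argument.
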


\begin{proof}
Write $\aideal=\prod_v\mathfrak m_v^{h_v}$.
If $b\in\aideal$ and $\eta\in\aideal^{-1}\Omega_X$, then $b\eta$
is regular on $X\setminus\{\infty\}$. Its residue at $\infty$ is zero
by the global residue theorem. The same argument applies when a form
in $\Omega_X$ is added to $\eta$. This proves well-definedness, and
$A_X$-balancing follows from $(ab)\eta=a(b\eta)$.

At a point $v$ in the support of $\aideal$, choose a local parameter
$z_v$. Since $X$ is smooth and $\kappa(v)/\Fq$ is separable, completion
identifies the two local factors with
\[
 \kappa(v)[[z_v]]/(z_v^{h_v}),\qquad
 z_v^{-h_v}\kappa(v)[[z_v]]\,dz_v/
             \kappa(v)[[z_v]]\,dz_v.
\]
The local pairing is the negative trace to $\Fq$ of the coefficient
of $z_v^{-1}dz_v$ in $b\eta$. For $0\le i,j<h_v$,
\[
 \operatorname{res}_v(z_v^i z_v^{-j-1}\,dz_v)=\delta_{ij}.
\]
The field trace pairing on $\kappa(v)$ is nondegenerate, so the local
pairing is perfect.
The Chinese remainder theorem and
$\operatorname{Res}_\infty=-\sum_{v\ne\infty}\operatorname{Res}_v$
give the asserted global duality.

Finally,
$\aideal(\aideal^{-1}\Omega_X)=\Omega_X$, so tensoring
$\aideal^{-1}\Omega_X$ with $A_X/\aideal$ gives $\Omega_\aideal$.
The former is an invertible $A_X$-module. Its reduction is therefore
invertible over $B_\aideal$, and every invertible module over an
Artinian semilocal ring is free of rank one.
\end{proof}

For a field extension $k/\Fq$, put $B_{\aideal,k}=B_{\aideal}\otimes_{\Fq}k$, and write $\langle\cdot,\cdot\rangle_{\aideal,k}$ for the scalar extension of the residue pairing \eqref{eq:general-residue-pairing}. A generator $\omega$ of the $B_{\aideal,k}$-module $\Omega_{\aideal}\otimes_{\Fq}k$ defines the functional $\lambda_\omega(b)=\langle\omega,b\rangle_{\aideal,k}$. The form $(a,b)\mapsto\lambda_\omega(ab)$ is symmetric and nondegenerate. The following product of Casimir tensors expands to the higher Casimir tensor of Hu--Huang \cite[Theorem~5.9, Definition~5.10 and equation~(122)]{HuHuang2024}.

\begin{definition}[Casimir tensor and higher Casimir tensor]
Let \(e_1,\ldots,e_s\) be a \(k\)-basis of
\(B_{\aideal,k}\), and let \(e_1^\vee,\ldots,e_s^\vee\) be its dual
basis for \((a,b)\mapsto\lambda_\omega(ab)\). The Casimir tensor and the higher Casimir tensor are
\begin{align}
 C_{\aideal,\omega}
 &=\sum_{i=1}^{s}e_i\otimes e_i^\vee
 \in B_{\aideal,k}^{\otimes_k2},
 \notag\\
 W_{\aideal,\omega}^{[r]}
 &=\prod_{j=1}^{r-1}(C_{\aideal,\omega})_{j,r}
 \in B_{\aideal,k}^{\otimes_k r}
 \qquad(r\ge2).
 \label{eq:general-higher-Casimir}
\end{align}
For $r=1$, set $W_{\aideal,\omega}^{[1]}=1\in B_{\aideal,k}$, using the empty product convention.
\end{definition}

\begin{proposition}[Universal properties of the Casimir tensors]
\label{prop:general-Casimir-properties}
The tensors $C_{\aideal,\omega}$ and $W_{\aideal,\omega}^{[r]}$ are independent
of the chosen basis. For every \(b\in B_{\aideal,k}\),
\begin{align}
 (b\otimes1)C_{\aideal,\omega}
 &=(1\otimes b)C_{\aideal,\omega},
 \label{eq:Casimir-balancing-general}\\
 (\lambda_\omega\otimes1)
 \bigl((b\otimes1)C_{\aideal,\omega}\bigr)
 &=b.
 \label{eq:Casimir-reproduce-general}
\end{align}
Also, \(W_{\aideal,\omega}^{[r]}\) is symmetric and is uniquely
characterized by
\begin{equation}\label{eq:higher-Casimir-characterization}
 (\lambda_\omega^{\otimes r})
 \bigl((b_1\otimes\cdots\otimes b_r)
 W_{\aideal,\omega}^{[r]}\bigr)
 =\lambda_\omega(b_1\cdots b_r)
\end{equation}
for all \(b_1,\ldots,b_r\in B_{\aideal,k}\). In particular,
\begin{equation}\label{eq:higher-full-balancing-general}
 (b_i-b_j)W_{\aideal,\omega}^{[r]}=0
\end{equation}
for every \(b\in B_{\aideal,k}\) and all tensor positions \(i,j\).
If \(\omega'=u\omega\), with \(u\in B_{\aideal,k}^{\times}\), then
\begin{equation}\label{eq:Casimir-scaling-general}
 C_{\aideal,\omega'}
 =(1\otimes u^{-1})C_{\aideal,\omega},
 \qquad
 W_{\aideal,\omega'}^{[r]}
 =(1\otimes\cdots\otimes1\otimes u^{-(r-1)})
 W_{\aideal,\omega}^{[r]}.
\end{equation}
\end{proposition}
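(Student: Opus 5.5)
The plan is pure linear algebra over $k$, using only that $(a,b)\mapsto\lambda_\omega(ab)$ is a nondegenerate symmetric bilinear form on the finite-dimensional commutative $k$-algebra $B=B_{\aideal,k}$. The central tool is the canonical isomorphism $\Phi:B\otimes_kB\to\End_k(B)$, $a\otimes b\mapsto(y\mapsto\lambda_\omega(ya)\,b)$, which is bijective by nondegeneracy.

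\textbf{Step 1: basis independence.} Under $\Phi$, the Casimir tensor $C_{\aideal,\omega}=\sum e_i\otimes e_i^\vee$ maps to $y\mapsto\sum_i\lambda_\omega(ye_i)e_i^\vee=y$, i.e.\ to the identity. Hence $C_{\aideal,\omega}$ does not depend on the basis. The higher tensor $W^{[r]}_{\aideal,\omega}$ is a product of copies of $C_{\aideal,\omega}$ placed in positions $(j,r)$, so it is basis independent as well.

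\textbf{Step 2: balancing, reproduction and scaling.} By the symmetry of the form,
\[
\Phi\bigl((b\otimes1)C\bigr)(y)=\sum_i\lambda_\omega(ybe_i)e_i^\vee=yb,
\qquad
\Phi\bigl((1\otimes b)C\bigr)(y)=\sum_i\lambda_\omega(ye_i)\,b\,e_i^\vee=by .
\]
Commutativity of $B$ makes these equal, which gives \eqref{eq:Casimir-balancing-general}. For \eqref{eq:Casimir-reproduce-general}, note that $(\lambda_\omega\otimes1)(x)=\Phi(x)(1)$, so $(\lambda_\omega\otimes1)\bigl((b\otimes1)C\bigr)=b$.

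For the scaling rule, $\lambda_{u\omega}(y)=\lambda_\omega(uy)$. Therefore the dual basis for $\omega'=u\omega$ is $\{u^{-1}e_i^\vee\}$, which gives $C_{\aideal,\omega'}=(1\otimes u^{-1})C_{\aideal,\omega}$. The product defining $W^{[r]}$ has $r-1$ factors, each carrying $u^{-1}$ in the last slot, which gives \eqref{eq:Casimir-scaling-general}.

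\textbf{Step 3: the characterization \eqref{eq:higher-Casimir-characterization}.} We induct on $r$, peeling off the factor $(C)_{1,r}$. By \eqref{eq:Casimir-balancing-general}, multiplying $W^{[r]}$ by $b_1$ in slot $1$ is the same as moving $b_1$ into slot $r$. By \eqref{eq:Casimir-reproduce-general} applied in slots $(1,r)$, applying $\lambda_\omega$ in slot $1$ to $(b_1)_1(C)_{1,r}$ produces $b_1$ in slot $r$. This reduces the left side to
\[
\lambda_\omega^{\otimes(r-1)}\bigl((b_2\otimes\cdots\otimes b_{r-1}\otimes b_1b_r)W^{[r-1]}\bigr).
\]
The induction hypothesis then gives $\lambda_\omega(b_1\cdots b_r)$. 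The base case $r=1$ is trivial.

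\textbf{Step 4: uniqueness, symmetry and full balancing.} Uniqueness holds because $\lambda_\omega^{\otimes r}$ applied to $(b_1\otimes\cdots\otimes b_r)\,\cdot$ is the $r$-fold tensor power of the perfect pairing. So a tensor pairing to zero against all pure tensors vanishes.

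Symmetry follows from uniqueness, since the right side $\lambda_\omega(b_1\cdots b_r)$ is invariant under permuting the $b_i$.

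For \eqref{eq:higher-full-balancing-general}, pair $(b_i-b_j)W^{[r]}$ against an arbitrary pure tensor. By the characterization this gives $\lambda_\omega(b\,b_1\cdots b_r)-\lambda_\omega(b\,b_1\cdots b_r)=0$, and uniqueness forces $(b_i-b_j)W^{[r]}=0$.

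\textbf{Main obstacle.} The delicate point is the bookkeeping of tensor slots in the inductive Step 3: tracking where $b_1$ lands after contracting in slot $1$, and checking that the remaining factors $(C)_{j,r}$ for $j\ge2$ are untouched. Everything else follows formally from the isomorphism $\Phi$.
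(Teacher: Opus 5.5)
Your proposal is correct and follows essentially the paper's route: you identify $B_{\aideal,k}\otimes_k B_{\aideal,k}$ with $\End_k(B_{\aideal,k})$ so that the Casimir tensor becomes the identity, read off balancing and reproduction from this, prove the characterization by successive contraction, and deduce uniqueness, symmetry and full balancing from the perfectness of $\lambda_\omega^{\otimes r}$, with scaling coming from the new dual basis $(u^{-1}e_i^\vee)$. The only cosmetic difference is that your isomorphism contracts in the first tensor factor, whereas the paper's contracts in the second.
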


\begin{proof}
The perfect form identifies $B_{\aideal,k}\otimes_kB_{\aideal,k}$ with $\End_k(B_{\aideal,k})$ by $a\otimes c\mapsto(z\mapsto a\lambda_\omega(cz))$. The Casimir tensor maps to the identity, proving basis independence and \eqref{eq:Casimir-reproduce-general}. Multiplication by $b$ on either side gives \eqref{eq:Casimir-balancing-general}. Transposition gives symmetry.

Expanding the product \eqref{eq:general-higher-Casimir} and applying reproduction successively proves \eqref{eq:higher-Casimir-characterization}. Since $\lambda_\omega^{\otimes r}$ is perfect, this characterization is unique. Its right hand side is symmetric and unchanged when a factor $b$ is moved between two arguments, proving symmetry and \eqref{eq:higher-full-balancing-general}. Finally, $\lambda_{u\omega}(z)=\lambda_\omega(uz)$, so the new dual basis is $(u^{-1}e_i^\vee)$. This proves both scaling formulas.
\end{proof}
By \eqref{eq:higher-full-balancing-general}, the unit in \eqref{eq:Casimir-scaling-general} may be placed in any tensor factor.

\subsection{Torsion and motive quotient}

Let $L$ be an $A_X$-field whose characteristic is coprime to $\aideal$, let $\bar L=L^{\mathrm{sep}}$, and let $\varphi:A_X\to L\{\tau\}$ be a Drinfeld module of rank $r$, where $\tau$ denotes the $q$-Frobenius operator on $L$, so that $\tau c=c^q\tau$ for $c\in L$. Write $\varphi[\aideal]$ for the $\aideal$-torsion module, consisting of the common zeros of $\varphi_b$ for all $b\in\aideal$. Put $M_\varphi=L\{\tau\}$, with right action $m\cdot a=m\circ\varphi_a$, and $M_{\varphi,\aideal}=M_\varphi/\aideal M_\varphi$. We use the scalar extension
\begin{align*}
 \Omega_{\aideal,\bar L}&=\Omega_{\aideal}\otimes_{\Fq}\bar L.
\end{align*}
The duality below retains $\Omega_{\aideal}$ without choosing a generator.

\begin{lemma}[Motive and torsion modules]\label{lem:finite-motive-structure}
The $A_X\otimes_{\Fq}L$-module $M_\varphi$ is locally free of rank $r$. For $s=\dim_{\Fq}B_{\aideal}$, its quotient $M_{\varphi,\aideal}$ is free of rank $r$ over $B_{\aideal}\otimes_{\Fq}L$ and has $L$-dimension $rs$. If $\aideal$ is away from the characteristic, then $\varphi[\aideal]\simeq B_{\aideal}^r$, and the linearization of $\tau$ on the finite motive is invertible.
\end{lemma}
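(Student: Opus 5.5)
We treat the four assertions of Lemma~\ref{lem:finite-motive-structure} one after another, following the standard theory of Anderson motives for Drinfeld modules over a Dedekind ring.

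\emph{Local freeness of $M_\varphi$.} Choose a nonconstant $a\in A_X$ and consider the finite flat extension $\Fq[a]\subset A_X$; its degree is $\deg_{\Fq}(a)=[\Frac A_X:\Fq(a)]$. By the division algorithm in $L\{\tau\}$ with respect to $\varphi_a$, whose $\tau$-degree is $r\deg(a)$, the module $M_\varphi$ is free over $L[a]$ with basis $1,\tau,\ldots,\tau^{r\deg a-1}$. Hence $M_\varphi$ is finitely generated over $A_X\otimes L$. It is torsion free, because $m\circ\varphi_b=0$ with $b\neq0$ forces $m=0$ by comparing $\tau$-degrees. Since $A_X\otimes L$ is Dedekind (as $L/\Fq$ is separable over the constant field, possibly after noting that $A_X\otimes L$ is regular of dimension one), finitely generated torsion-free modules over it are locally free. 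The rank is then $r\deg(a)/\deg(a)=r$, obtained by comparing $L[a]$-ranks.

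\emph{The finite motive $M_{\varphi,\aideal}$.} Tensoring a locally free module of rank $r$ with $B_\aideal\otimes L$ yields a projective module of constant rank $r$ over the Artinian semilocal ring $B_{\aideal}\otimes L$, and such a module is free of rank $r$. Its $L$-dimension is therefore $rs$. One can check this directly: for principal $\aideal=(a)$ the quotient is $L\{\tau\}/L\{\tau\}\varphi_a$, of dimension $r\deg a=rs$.

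\emph{Torsion.} For $\aideal$ coprime to the characteristic, each $\varphi_b$ with $b\in\aideal$ is separable, since its $\tau^0$-coefficient $\partial(b)$ is nonzero for suitable $b$. I plan to reduce to prime powers $\pideal^h$ via the Chinese remainder theorem. For $\pideal$, the torsion $\varphi[\pideal]$ is a $B_\pideal$-vector space whose order is read off from the degree of the gcd of $\varphi_b$, $b\in\pideal$; using the identification $\varphi[\aideal]\simeq\Hom_{L}(M_{\varphi,\aideal},\bar L)^{\tau}$, its order is $q^{rs}$. The structure theorem for modules over the principal Artinian ring $B_{\pideal^h}$, together with the count of $\pideal$-torsion ($\#\varphi[\pideal]=\#B_\pideal^r$), then gives $\varphi[\pideal^h]\simeq B_{\pideal^h}^r$.

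\emph{Invertibility of $\tau$.} The linearized map $\tau^{\mathrm{lin}}:\sigma^*M_{\varphi,\aideal}\to M_{\varphi,\aideal}$ has cokernel $M_\varphi/\tau M_\varphi$, reduced modulo $\aideal$. This cokernel is $L$ with $a$ acting through $\partial(a)$, so it is supported at the characteristic ideal. Reduction modulo an $\aideal$ coprime to that ideal kills it. Hence $\tau^{\mathrm{lin}}$ is surjective, and therefore bijective, because both sides have $L$-dimension $rs$.

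I expect the main obstacle to be the torsion count: justifying rigorously that $\#\varphi[\aideal]=q^{rs}$ for non-principal $\aideal$. My first approach would be through Lang's theorem applied to the \'etale $\tau$-module $M_{\varphi,\aideal}\otimes\bar L$, whose $\tau$-invariants have $\Fq$-dimension $rs$.
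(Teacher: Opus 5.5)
Your proposal is correct. The freeness part follows the paper: division by $\varphi_a$, then Dedekind plus torsion-free gives projective, then constant rank over an Artinian semilocal ring gives free. For the torsion count and for invertibility of $\tau^{\mathrm{lin}}$ you take different routes.

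The obstacle you anticipate for non-principal $\aideal$ does not arise in the paper's proof, because of one observation. $\aideal M_\varphi$ is a left ideal of $L\{\tau\}$, so it is principal with a monic generator $P_\aideal$ (Euclidean division with the divisor on the right), whether or not $\aideal$ is principal in $A_X$. From this:
\begin{itemize}
\item $\deg_\tau P_\aideal=\dim_L M_{\varphi,\aideal}=rs$.
\item Writing $\varphi_a=QP_\aideal$ for some $a\in\aideal$ with $\partial(a)\neq0$ shows that the $\tau^0$-coefficient of $P_\aideal$ is nonzero.
\item The roots of $P_\aideal$ are exactly the common roots of the $\varphi_b$, $b\in\aideal$.
\end{itemize}
Separability then gives $\#\varphi[\aideal]=q^{rs}$ without Lang's theorem. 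The same generator also gives invertibility directly: in the basis $1,\tau,\ldots,\tau^{rs-1}$, $\tau^{\mathrm{lin}}$ is a companion matrix with determinant $\pm$ the $\tau^0$-coefficient of $P_\aideal$.

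Your cokernel argument for $\tau^{\mathrm{lin}}$ is also correct, and it has the merit of not depending on any torsion computation. Combined with Lang's theorem over $\bar L=L^{\rm sep}$, and your identification of $\varphi[\aideal]$ with the $\tau$-equivariant maps $M_{\varphi,\aideal}\otimes_L\bar L\to\bar L$, it gives the count $q^{rs}$. These maps are $\Hom_{\Fq}\bigl((M_{\varphi,\aideal}\otimes_L\bar L)^{\tau},\Fq\bigr)$. This route is more conceptual and extends to general \'etale $\tau$-modules. However, it needs \'etaleness before the torsion count, so you must prove your last step before the torsion step.

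Two small repairs are needed.
\begin{itemize}
\item Torsion-freeness must hold over all of $C=A_X\otimes_{\Fq}L$, not only for $b\in A_X$. For a general $c=\sum_i b_i\otimes l_i$, the leading terms of $\sum_i l_i\,m\varphi_{b_i}$ can cancel. The paper uses that $M_\varphi$ is free over $L[a]$, and that every nonzero $c\in C$ divides a nonzero element of $L[a]$, namely its norm in the finite extension of domains $C/L[a]$.
\item In the structure argument, $\#\varphi[\pideal]=\#B_\pideal^r$ only shows that there are $r$ cyclic summands. You also need $\#\varphi[\pideal^h]=\#B_{\pideal^h}^r$ to force each summand to have length $h$. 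Your general count supplies this, but you should state that you use it.
\end{itemize}
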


\begin{proof}
Choose nonconstant $a\in A_X$ and put $m=\dim_{\Fq}A_X/(a)$. The finite map defined by $a$ makes $A_X$ free of rank $m$ over $\Fq[a]$, and $\deg_\tau\varphi_a=rm$. Reduction by its leading term gives the $L[a]$-basis $1,\tau,\ldots,\tau^{rm-1}$ of $M_\varphi$, and independence follows from the distinct $\tau$-degrees modulo $rm$.

The ring $C=A_X\otimes_{\Fq}L$ is Dedekind, since $X$ is smooth and geometrically integral. Every nonzero $c\in C$ divides a nonzero element of $L[a]$. Apply the characteristic polynomial of multiplication by $c$ in the finite free domain extension $C/L[a]$, whose determinant is nonzero. Thus $M_\varphi$ is torsion free and finite over $C$, hence projective. Comparing ranks over $L[a]$ gives rank $r$. Its reduction is projective of constant rank $r$ over the Artinian semilocal ring $B_{\aideal}\otimes_{\Fq}L$, hence free.

The left ideal $\aideal M_\varphi$ has a monic generator $P_{\aideal}$, obtained by choosing least $\tau$-degree and using left division. The quotient dimension gives $\deg_\tau P_{\aideal}=rs$. Choose $a\in\aideal$ with nonzero characteristic image. Since $\varphi_a\in\aideal M_\varphi$, write $\varphi_a=Q P_{\aideal}$ with $Q\in M_\varphi$. The nonzero coefficient of $\tau^0$ in $\varphi_a$ implies the same for $P_{\aideal}$. Hence $P_{\aideal}$ is separable, with roots exactly the common zeros of all $\varphi_b$, $b\in\aideal$. Thus $\#\varphi[\aideal]=q^{rs}$ and these points lie in $L^{\rm sep}$.

For $\mathfrak m^h$ away from the characteristic, this count gives length $rh$ over $(A_X)_{\mathfrak m}$. The submodule killed by $\mathfrak m$ has dimension $r$ over $A_X/\mathfrak m$, so there are $r$ cyclic summands. Each has length at most $h$, hence exactly $h$. The Chinese remainder theorem yields $\varphi[\aideal]\simeq B_{\aideal}^r$. Finally, in the basis $1,\tau,\ldots,\tau^{rs-1}$ modulo $P_{\aideal}$, the linearized $\tau$-matrix is a companion matrix with nonzero determinant because the coefficient of $\tau^0$ in $P_{\aideal}$ is nonzero.
\end{proof}

\begin{proposition}[Torsion and motive duality]
\label{thm:torsion-motive-duality-general}
For \(c\in\bar L\), \(\mu\in\varphi[\aideal]\), and
\(m\in M_\varphi\), there is a unique differential class
\(h_{\varphi,\aideal}(c\otimes\mu)(\bar m)\) satisfying
\begin{equation}\label{eq:h-mu-defining-property}
 \left\langle
 h_{\varphi,\aideal}(c\otimes\mu)(\bar m),\bar b
 \right\rangle_{\aideal,\bar L}
 =c\,m\bigl(\varphi_b(\mu)\bigr)
 \qquad(\bar b\in B_{\aideal}).
\end{equation}
These values form a canonical, Galois-equivariant,
\(B_{\aideal}\otimes_{\Fq}\bar L\)-linear isomorphism
\begin{align*}
 h_{\varphi,\aideal}:
 \bar L\otimes_{\Fq}\varphi[\aideal]
 \xrightarrow{\ \sim\ }
 \Hom_{B_{\aideal}\otimes_{\Fq}\bar L}
 \left(M_{\varphi,\aideal}\otimes_L\bar L,
 \Omega_{\aideal,\bar L}\right).
\end{align*}
\end{proposition}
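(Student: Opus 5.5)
The plan is to build $h_{\varphi,\aideal}$ in three steps: define the value for each element by residue duality, check the required linearity and equivariance, and then prove bijectivity by a dimension count together with an injectivity argument.

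\emph{Existence and well-definedness.} Fix $c\in\bar L$, $\mu\in\varphi[\aideal]$ and $m\in M_\varphi$. Consider the map $b\mapsto c\,m(\varphi_b(\mu))$ on $A_X$. It is $\Fq$-linear, because $b\mapsto\varphi_b$ is additive and $m$ is $\Fq$-linear. It vanishes on $\aideal$, since $\varphi_b(\mu)=0$ for $b\in\aideal$. Hence it descends to an element of $\Hom_{\Fq}(B_{\aideal},\bar L)=\Hom_{\Fq}(B_{\aideal},\Fq)\otimes_{\Fq}\bar L$, using that $B_{\aideal}$ is finite-dimensional. By Proposition~\ref{thm:general-residue-duality}, extended by scalars, there is a unique class in $\Omega_{\aideal,\bar L}$ satisfying \eqref{eq:h-mu-defining-property}. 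It remains to see that this class depends only on $\bar m$. If $m=\sum m_j\circ\varphi_{a_j}$ with $a_j\in\aideal$, then $m(\varphi_b(\mu))=\sum m_j(\varphi_{a_jb}(\mu))=0$, because $a_jb\in\aideal$.

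\emph{Linearity and equivariance.} $L$-linearity in $\bar m$ is clear, and it extends to the scalar extension $M_{\varphi,\aideal}\otimes_L\bar L$ by $\bar L$-linearity in $c$. Additivity in $c\otimes\mu$ holds, and the map is balanced over $\Fq$, so it is defined on the tensor product. For $B_{\aideal}$-linearity, take $a\in A_X$. The identity $(m\cdot a)(\varphi_b\mu)=m(\varphi_{ab}\mu)=m(\varphi_b(\varphi_a\mu))$ shows that the motive action and the torsion action agree. The $A_X$-balancing of the residue pairing in Proposition~\ref{thm:general-residue-duality} then shows that both actions agree with multiplication by $\bar a$ on $\Omega_{\aideal}$. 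Galois equivariance follows from uniqueness: $\sigma$ acts on $c$, on $\mu$ and on the $\bar L$-factor of the pairing, while $m$ has coefficients in $L$.

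\emph{Bijectivity.} I expect this to be the main step. Both sides are free $B_{\aideal}\otimes\bar L$-modules of rank $r$. The source is free by Lemma~\ref{lem:finite-motive-structure}, which gives $\varphi[\aideal]\simeq B_{\aideal}^r$. For the target, $M_{\varphi,\aideal}$ is free of rank $r$ and $\Omega_{\aideal}$ is free of rank one. In particular both sides have $\bar L$-dimension $rs$, so it suffices to prove injectivity.

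Suppose $\sum_k c_k\otimes\mu_k$ maps to zero. By perfectness this means $\sum_k c_k\,m(\varphi_b(\mu_k))=0$ for all $m$ and all $b$. Taking $b=1$ and $m=\tau^i$ gives $\sum_k c_k\mu_k^{q^i}=0$ for all $i\ge0$. Choose the $\mu_k$ to be an $\Fq$-basis of $\varphi[\aideal]$. The Moore determinant of an $\Fq$-linearly independent family is nonzero, so the system $\sum_k c_k\mu_k^{q^i}=0$ for $0\le i<rs$ forces all $c_k=0$. This is the classical Artin-type independence argument. The only subtlety is to make sure enough powers $\tau^i$ are available: since $M_\varphi=L\{\tau\}$ contains every $\tau^i$, this holds. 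Injectivity together with equality of dimensions gives the isomorphism. Canonicity holds because no generator of $\Omega_{\aideal}$ and no basis was chosen; the basis used in the injectivity argument is only auxiliary.
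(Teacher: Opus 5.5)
Your proposal is correct and follows essentially the same route as the paper. Both proofs check well-definedness using that $\aideal$ kills $\mu$ and $\aideal M_\varphi$, obtain the unique class from residue duality, and get linearity from balancing. Both then prove injectivity with the Moore determinant for $\bar b=1$, $m=\tau^k$ ($0\le k<rs$), and conclude with the $\bar L$-dimension count $rs$ from Lemma~\ref{lem:finite-motive-structure} and the rank-one property of $\Omega_{\aideal,\bar L}$.
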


\begin{proof}
The right hand side of \eqref{eq:h-mu-defining-property} depends only on $\bar b$ and $\bar m$, since $\aideal$ annihilates $\mu$, and for $a\in\aideal$ one has $(m'\circ\varphi_a)(\varphi_b(\mu))=m'(\varphi_{ab}(\mu))=0$. Residue duality gives the unique differential class. Balancing proves linearity in the motive and torsion variables, and the defining formula proves Galois equivariance.

Choose an $\Fq$-basis $\mu_1,\ldots,\mu_{rs}$ of $\varphi[\aideal]$. If $\sum_jc_j\otimes\mu_j$ is in the kernel, taking $\bar b=1$ and $m=1,\tau,\ldots,\tau^{rs-1}$ gives $\sum_jc_j\mu_j^{q^k}=0$ for $0\le k<rs$. The Moore determinant is nonzero, so every $c_j$ vanishes. Both sides have dimension $rs$ over $\bar L$ by Lemma~\ref{lem:finite-motive-structure} and the rank-one property of $\Omega_{\aideal,\bar L}$. Hence the map is an isomorphism.
\end{proof}

\begin{remark}[Frobenius compatibility]
Write $F_V(c\otimes\mu)=c^q\otimes\mu$ and $F_\Omega(\eta\otimes c)=\eta\otimes c^q$ for the coefficient Frobenius maps, and let $\tau_M$ be left multiplication by $\tau$ on the finite motive. The compatibility identity is
\begin{equation}\label{eq:h-Frobenius-compatibility}
 h_{\varphi,\aideal}(F_Vv)(\tau_Mm)
 =F_\Omega\bigl(h_{\varphi,\aideal}(v)(m)\bigr).
\end{equation}
For $v=c\otimes\mu$ and $m\in M_\varphi$, pairing with $b\in B_\aideal$ reduces this to $c^q m(\varphi_b(\mu))^q=(c\,m(\varphi_b(\mu)))^q$. Residue duality and semilinearity give the general identity.
\end{remark}

\subsection{Van der Heiden's determinant pairing with values in \texorpdfstring{$\Omega_{\aideal}$}{the differential module}}

We use the determinant construction of \cite{vanderHeiden2004}.
Fix a rank-one Drinfeld module $\psi$ and a specified $\tau$-compatible
isomorphism
\[
 \bigwedge^r_{A_X\otimes_{\Fq}L}M_\varphi\simeq M_\psi.
\]
All pairing statements below are relative to these data.
Reduction modulo $\aideal$ gives
\begin{equation*}
 \delta_{\varphi,\aideal}:
 \bigwedge_{B_{\aideal}\otimes_{\Fq}L}^{r}M_{\varphi,\aideal}
 \xrightarrow{\ \sim\ }M_{\psi,\aideal}.
\end{equation*}
For a rank-$r$ projective module $M$ and an invertible module $\Omega$ over a commutative ring $B$, the exterior Hom isomorphism
\begin{equation}\label{eq:exterior-Hom-isomorphism}
 \bigwedge^{r}\Hom_B(M,\Omega)
 \xrightarrow{\ \sim\ }
 \Hom_B(\det M,\Omega^{\otimes r})
\end{equation}
sends $f_1\wedge\cdots\wedge f_r$ to the map taking $m_1\wedge\cdots\wedge m_r$ to $\det(f_i(m_j))$. This gives van der Heiden's pairing in the residue realization.

\begin{proposition}
\label{thm:Weil-isomorphism}
For $\aideal$ away from the characteristic, the determinant Weil pairing is a Galois-equivariant isomorphism
\begin{equation}\label{eq:Omega-Weil-general}
 \operatorname{Weil}^{\Omega}_{\aideal}:
 \bigwedge_{B_{\aideal}}^r\varphi[\aideal]
 \xrightarrow{\ \sim\ }
 \psi[\aideal]\otimes_{B_{\aideal}}\Omega_{\aideal}^{\otimes(r-1)}.
\end{equation}
It is relative to the chosen $\tau$-compatible determinant realization and natural under extension of the definition field.
\end{proposition}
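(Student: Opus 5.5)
The plan is to assemble the pairing as a chain of canonical isomorphisms. Start from Proposition~\ref{thm:torsion-motive-duality-general}, which gives a Galois-equivariant $B_{\aideal}\otimes_{\Fq}\bar L$-linear isomorphism
\[
 h_{\varphi,\aideal}:\bar L\otimes_{\Fq}\varphi[\aideal]\xrightarrow{\ \sim\ }\Hom_{B_{\aideal,\bar L}}\bigl(M_{\varphi,\aideal}\otimes_L\bar L,\Omega_{\aideal,\bar L}\bigr),
\]
where $B_{\aideal,\bar L}=B_{\aideal}\otimes_{\Fq}\bar L$. Take the $r$-th exterior power over $B_{\aideal,\bar L}$. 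Lemma~\ref{lem:finite-motive-structure} says $M_{\varphi,\aideal}$ is free of rank $r$, and Proposition~\ref{thm:general-residue-duality} says $\Omega_{\aideal}$ is invertible. So \eqref{eq:exterior-Hom-isomorphism} applies and identifies the target with $\Hom(\det M_{\varphi,\aideal,\bar L},\Omega_{\aideal,\bar L}^{\otimes r})$. Compose with $\delta_{\varphi,\aideal}^{-1}$ to get $\Hom(M_{\psi,\aideal,\bar L},\Omega^{\otimes r}_{\aideal,\bar L})$. This module equals $\Hom(M_{\psi,\aideal,\bar L},\Omega_{\aideal,\bar L})\otimes\Omega_{\aideal,\bar L}^{\otimes(r-1)}$, because $\Omega$ is free of rank one. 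Finally apply $h_{\psi,\aideal}^{-1}$ (the rank-one case of Proposition~\ref{thm:torsion-motive-duality-general}) in the first factor. This yields an isomorphism
\[
 \bigwedge^r_{B_{\aideal,\bar L}}\bigl(\bar L\otimes\varphi[\aideal]\bigr)\xrightarrow{\ \sim\ }\bigl(\bar L\otimes\psi[\aideal]\bigr)\otimes_{B_{\aideal,\bar L}}\Omega_{\aideal,\bar L}^{\otimes(r-1)}.
\]
Note that $\bigwedge^r_{B_{\aideal,\bar L}}(\bar L\otimes V)=\bar L\otimes\bigwedge^r_{B_{\aideal}}V$, since exterior powers commute with flat base change.

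The second step is descent from $\bar L$ to $\Fq$-rational points, which is where the $\tau$-compatibility of the determinant realization is used. By \eqref{eq:h-Frobenius-compatibility}, each $h$ intertwines $F_V$ with the pair $(\tau_M,F_\Omega)$. Because the chosen isomorphism $\bigwedge^rM_\varphi\simeq M_\psi$ is $\tau$-compatible, $\delta_{\varphi,\aideal}$ sends $\tau_M^{\wedge r}$ (the action $\tau m_1\wedge\cdots\wedge\tau m_r$) to $\tau_M$ on $M_\psi$. The exterior Hom map \eqref{eq:exterior-Hom-isomorphism} is compatible with these semilinear operators, as one checks directly on $\det(f_i(m_j))$. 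Hence the composite isomorphism commutes with $c\mapsto c^q$ on coefficients. The $F$-invariants of $\bar L\otimes_{\Fq}W$ are exactly $W$ for any finite-dimensional $\Fq$-space $W$. Taking invariants on both sides therefore gives an isomorphism of $B_{\aideal}$-modules $\bigwedge^r\varphi[\aideal]\simeq\psi[\aideal]\otimes\Omega_{\aideal}^{\otimes(r-1)}$.

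For Galois equivariance, note that each map in the chain is Galois-equivariant: the $h$'s by Proposition~\ref{thm:torsion-motive-duality-general}, and $\delta$ and the exterior isomorphism because they are defined over $L$. The restriction to invariants is therefore equivariant as well. Naturality under extension $L\subset L'$ follows because every construction commutes with base change of the motive and $\bar L\subset\bar L'$.

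The step most likely to need care is the Frobenius intertwining through \eqref{eq:exterior-Hom-isomorphism} and the factor $\Omega^{\otimes r}$. It has to be checked that $F_\Omega^{\otimes r}$ on $\Omega^{\otimes r}_{\bar L}$ splits as $F_\Omega\otimes F_\Omega^{\otimes(r-1)}$, and that the semilinear Frobenius on $\Hom$-modules is well defined. The latter needs invertibility of the linearized $\tau$, which Lemma~\ref{lem:finite-motive-structure} supplies away from the characteristic.
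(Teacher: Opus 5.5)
Your proposal is correct and follows essentially the same route as the paper. You build the $\bar L$-level isomorphism from $\bigwedge^r h_{\varphi,\aideal}$, the exterior Hom isomorphism \eqref{eq:exterior-Hom-isomorphism}, $\delta_{\varphi,\aideal}$ and $h_{\psi,\aideal}^{-1}$, and then descend by checking compatibility with coefficient Frobenius through \eqref{eq:h-Frobenius-compatibility}, the $\tau$-compatibility of $\delta_{\varphi,\aideal}$, and the invertible linearized $\tau$ of Lemma~\ref{lem:finite-motive-structure}, followed by taking fixed points; the paper does the same and writes the Hom-module Frobenius explicitly as $F_\Omega^{\rm lin}\circ F_C^*f\circ(\tau_M^{\rm lin})^{-1}$, which is exactly the well-definedness point you flag.
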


\begin{proof}
Extend scalars to $C=B_{\aideal}\otimes_{\Fq}\bar L$, and compose $\bigwedge^r h_{\varphi,\aideal}$ with the exterior Hom isomorphism \eqref{eq:exterior-Hom-isomorphism} and the determinant realization $\delta_{\varphi,\aideal}$. Since $M_\psi$ has rank one, the target is
\begin{align*}
 \Hom_C(M_{\psi,\aideal,\bar L},\Omega_{\aideal,\bar L})
 \otimes_C\Omega_{\aideal,\bar L}^{\otimes(r-1)}.
\end{align*}
Applying $h_{\psi,\aideal}^{-1}$ to the first factor identifies the scalar extensions in \eqref{eq:Omega-Weil-general}. Write $F_C=\operatorname{id}\otimes(c\mapsto c^q)$ and $F_C^*P=C\otimes_{F_C,C}P$. Lemma~\ref{lem:finite-motive-structure} gives the invertible linearization $\tau_M^{\rm lin}:F_C^*(M_{\varphi,\aideal}\otimes_L\bar L)\to M_{\varphi,\aideal}\otimes_L\bar L$, and the analogous $F_\Omega^{\rm lin}$ is invertible because $\Omega_{\aideal,\bar L}$ is extended from $\Fq$. By \eqref{eq:h-Frobenius-compatibility}, Frobenius on the Hom module is $F_{\rm Hom}(f)=F_\Omega^{\rm lin}\circ F_C^*f\circ(\tau_M^{\rm lin})^{-1}$, which is defined even though $\bar L$ need not be perfect, and $h_{\varphi,\aideal}\circ F_V=F_{\rm Hom}\circ h_{\varphi,\aideal}$. Together with the exterior determinant formula and the $\tau$-compatibility of $\delta_{\varphi,\aideal}$, this shows that the isomorphism commutes with coefficient Frobenius. Both sides are scalar extensions of finite-dimensional $\Fq$-spaces. In an $\Fq$-basis, fixed points satisfy $c^q=c$ on each coefficient. Taking fixed points gives precisely \eqref{eq:Omega-Weil-general}. All maps are Galois equivariant and compatible with extension of $L$.
\end{proof}

For scalar coordinates, choose a finite separable extension $k/L$ in $\bar L$ containing the $\aideal$-torsion of $\varphi$ and $\psi$ and defining the motive bases below. For a generator $\omega$ of the $B_{\aideal,k}$-module $\Omega_{\aideal}\otimes_{\Fq}k$, writing $\nu\otimes b\omega^{\otimes(r-1)}\mapsto b\cdot\nu$ for the corresponding trivialization, define
\begin{align*}
 \operatorname{Weil}_{\aideal,\omega}
 =(\nu\otimes b\omega^{\otimes(r-1)}\mapsto b\cdot\nu)\circ(\operatorname{Weil}^{\Omega}_{\aideal}\otimes k).
\end{align*}
Let $\mathbf m=(m_1,\ldots,m_r)$ be a $B_{\aideal,k}$-basis of the finite motive, and put $m_\psi=\delta_{\varphi,\aideal}(m_1\wedge\cdots\wedge m_r)$. For $c=\sum_\nu b_\nu\otimes\gamma_\nu\in B_{\aideal,k}$, its torsion action is
$c\diamond_{\varphi,k}(1\otimes\mu)=\sum_\nu\gamma_\nu\otimes\varphi_{b_\nu}(\mu)$, extended $k$-linearly. This is independent of the presentation of $c$. For $m\in M_\varphi$ and $c\in k$, evaluation on the formal
scalar extension means
\[
 m(c\otimes\mu)=c\,m(\mu).
\]
Extend this formula $k$-linearly in $m$ as well. In particular, the
scalar in the first tensor factor is kept outside the additive
polynomial. This convention is used in the determinant below. For $\Xi=\sum_\kappa c_{\kappa,1}\otimes\cdots\otimes c_{\kappa,r}\in B_{\aideal,k}^{\otimes_kr}$, define
\begin{equation*}
 \operatorname{DEv}_{\varphi,\mathbf m}
 (\Xi;\mu_1,\ldots,\mu_r)
 =\sum_\kappa\det\!\left(
 m_i\!\left(c_{\kappa,j}\diamond_{\varphi,k}(1\otimes\mu_j)\right)
 \right)_{1\le i,j\le r}.
\end{equation*}
Multilinearity makes this independent of the tensor presentation. We also write $m_\psi$ for its induced $k$-linear evaluation on determinant torsion. The following is the dual basis formula of \cite[Theorem~5.9 and equation~(122)]{HuHuang2024}, expressed through the residue Casimir tensor.

\begin{proposition}[General dual basis formula]
\label{thm:general-dual-basis-formula}
For \(\mu_1,\ldots,\mu_r\in\varphi[\aideal]\), put
\(\boldsymbol\mu=(\mu_1,\ldots,\mu_r)\). Then
\begin{align*}
 m_\psi\!
 \left(\operatorname{Weil}_{\aideal,\omega}
 (\mu_1,\ldots,\mu_r)\right)
 =
 \operatorname{DEv}_{\varphi,\mathbf m}
 \left(W_{\aideal,\omega}^{[r]};
 \mu_1,\ldots,\mu_r\right).
\end{align*}
The formula is independent of every basis used to write the Casimir tensor.
If \(\omega'=u\omega\), with
\(u\in B_{\aideal,k}^{\times}\), then the pairing with values in $\Omega_{\aideal}$ is unchanged and
\begin{equation}\label{eq:scalar-Weil-scaling-correct}
 \operatorname{Weil}_{\aideal,\omega'}
 =u^{-(r-1)}\cdot
 \operatorname{Weil}_{\aideal,\omega}
 \quad\text{in }k\otimes_{\Fq}\psi[\aideal].
\end{equation}
If $u^{-(r-1)}\in k^\times$, in particular if $u\in k^\times$, this factor may be pulled through the $k$-linear evaluation map $m_\psi$. For a general unit $u$, the factor acts on the module inside the evaluation map as in \eqref{eq:scalar-Weil-scaling-correct}.
\end{proposition}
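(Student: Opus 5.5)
The argument unwinds the construction of $\operatorname{Weil}^{\Omega}_{\aideal}$ in Proposition~\ref{thm:Weil-isomorphism} after scalar extension to $k$ and trivialization by $\omega$, and compares the resulting determinant with $\operatorname{DEv}$ of the higher Casimir tensor.

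\textbf{Step 1: coordinates of $h_{\varphi,\aideal}$.} Fix a $k$-basis $e_1,\dots,e_s$ of $B_{\aideal,k}$ and its $\lambda_\omega$-dual basis $e_i^\vee$. With $\omega$ fixed, $\Omega_{\aideal}\otimes k=B_{\aideal,k}\omega$, so an element $\eta=b\omega$ is determined by its functional $\bar c\mapsto\lambda_\omega(bc)$. By \eqref{eq:h-mu-defining-property} and the reproducing property \eqref{eq:Casimir-reproduce-general},
\[
h_{\varphi,\aideal}(1\otimes\mu)(m)=\Bigl(\sum_i e_i^\vee\, m(e_i\diamond_{\varphi,k}(1\otimes\mu))\Bigr)\omega .
\]
In tensor form this reads $(\mathrm{id}\otimes \mathrm{ev}_{m,\mu})(C_{\aideal,\omega})\,\omega$, which shows that the $\omega$-coordinate of $h$ is obtained by contracting the Casimir tensor.

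\textbf{Step 2: the determinant.} Apply the exterior Hom isomorphism \eqref{eq:exterior-Hom-isomorphism} to $f_j=h(1\otimes\mu_j)$ and evaluate on $m_1\wedge\cdots\wedge m_r$. This gives $\det(f_j(m_i))\in\Omega^{\otimes r}$. Each entry is $\sum_\kappa c_{\kappa}\,m_i(c'_\kappa\diamond\mu_j)\,\omega$, where $C_{\aideal,\omega}=\sum c'_\kappa\otimes c_\kappa$. Expanding the determinant multilinearly produces a sum over $r$-tuples of Casimir summands: the scalars $e_{i_j}$ act on $\mu_j$, and the coefficient is the product $\prod_j e_{i_j}^\vee$ times $\omega^{\otimes r}$.

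\textbf{Step 3: separating the $\psi$-factor.} To pass to $\psi[\aideal]\otimes\Omega^{\otimes(r-1)}$, write $\Omega^{\otimes r}=\Omega\otimes\Omega^{\otimes(r-1)}$. The first $\Omega$-factor, paired with $m_\psi$, is read back via $h_{\psi,\aideal}^{-1}$. By the defining property of $h_\psi$ this amounts to applying $\lambda_\omega$ to one coordinate, while the remaining product $\prod e^\vee$ becomes the scalar $b$ against $\omega^{\otimes(r-1)}$. The main obstacle is the bookkeeping showing that collapsing the $r$ dual factors $e_{i_1}^\vee\cdots e_{i_r}^\vee$ under one $\lambda_\omega$ and moving them onto the torsion via $\diamond$ (using balancing \eqref{eq:Casimir-balancing-general} and \eqref{eq:higher-full-balancing-general}) yields exactly $\prod_{j<r}(C_{\aideal,\omega})_{j,r}$. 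To do this, I would test both sides against arbitrary $b_1\otimes\cdots\otimes b_r$ and invoke the characterization \eqref{eq:higher-Casimir-characterization}: both tensors satisfy $\lambda_\omega^{\otimes r}((b_1\otimes\cdots\otimes b_r)\,\cdot)=\lambda_\omega(b_1\cdots b_r)$, so they coincide. Multilinearity of $\operatorname{DEv}$ together with $k$-linearity of $m_\psi$ (using the convention $m(c\otimes\mu)=c\,m(\mu)$) then gives the displayed identity.

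\textbf{Step 4: independence and scaling.} Basis independence follows from Proposition~\ref{prop:general-Casimir-properties}, and independence of the tensor presentation from multilinearity. For the scaling statement, $\operatorname{Weil}^\Omega_{\aideal}$ does not involve $\omega$ at all. The trivialization sends $b\omega^{\otimes(r-1)}=b\,u^{-(r-1)}\omega'^{\otimes(r-1)}$ to $u^{-(r-1)}b$, which gives \eqref{eq:scalar-Weil-scaling-correct}; this is consistent with \eqref{eq:Casimir-scaling-general}. When $u^{-(r-1)}\in k^\times$ it commutes with the $k$-linear map $m_\psi$. Otherwise it acts through $\diamond_{\psi,k}$ inside the evaluation.
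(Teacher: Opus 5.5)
Your proposal is correct and follows essentially the paper's route: it uses the dual-basis coordinates $u_{ij}$ of $h_{\varphi,\aideal}(1\otimes\mu_j)(m_i)$, the identity $m_\psi(\operatorname{Weil}_{\aideal,\omega}(\boldsymbol\mu))=\lambda_\omega(\det(u_{ij}))$ obtained by pairing the $h_{\psi,\aideal}$-value with $\bar b=1$, and then the characterization \eqref{eq:higher-Casimir-characterization} to turn $\lambda_\omega$ of the expanded determinant into $\operatorname{DEv}_{\varphi,\mathbf m}(W^{[r]}_{\aideal,\omega};\boldsymbol\mu)$. The paper applies that characterization to the representing elements $v_j$ of the functionals $c\mapsto m_{\sigma(j)}(c\cdot\mu_j)$, whereas you use it to identify the tensor $\sum\lambda_\omega(e_{i_1}^\vee\cdots e_{i_r}^\vee)\,e_{i_1}\otimes\cdots\otimes e_{i_r}$ with $W^{[r]}_{\aideal,\omega}$; this is the same computation. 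Two small fixes to your Step~3: say plainly that the whole product $\det(u_{ij})$ goes under a single $\lambda_\omega$ (not that $\lambda_\omega$ is applied to ``one coordinate''), and drop the appeal to the balancing identities, which is not needed.
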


\begin{proof}
Let $(b_\ell)$ and $(b_\ell^\vee)$ be dual $k$-bases of $B_{\aideal,k}$ for $\lambda_\omega$, and write $h_{\varphi,\aideal}(1\otimes\mu_j)(m_i)=u_{ij}\omega$. By \eqref{eq:h-mu-defining-property}, $\lambda_\omega(u_{ij}c)=m_i(c\cdot\mu_j)$, so
\begin{equation}\label{eq:uij-dual-expansion}
 u_{ij}=\sum_{\ell=1}^{s}m_i(b_\ell\cdot\mu_j)b_\ell^\vee.
\end{equation}
Put $Q_{\boldsymbol\mu}=\det(u_{ij})$. Exterior Hom gives
$h_{\psi,\aideal}(\operatorname{Weil}_{\aideal,\omega}(\boldsymbol\mu))(m_\psi)=Q_{\boldsymbol\mu}\omega$. Pairing with $1$ yields
\begin{equation*}
 m_\psi\!\left(\operatorname{Weil}_{\aideal,\omega}
 (\mu_1,\ldots,\mu_r)\right)=\lambda_\omega(Q_{\boldsymbol\mu}).
\end{equation*}
For a permutation $\sigma$, set $\ell_j(c)=m_{\sigma(j)}(c\cdot\mu_j)$ and $v_j=\sum_\ell\ell_j(b_\ell)b_\ell^\vee$. Then $\ell_j(c)=\lambda_\omega(v_jc)$, and \eqref{eq:higher-Casimir-characterization} gives
\begin{align*}
 \lambda_\omega(v_1\cdots v_r)
 =\sum_\kappa\prod_j\ell_j(c_{\kappa,j}),
 \qquad W_{\aideal,\omega}^{[r]}
 =\sum_\kappa c_{\kappa,1}\otimes\cdots\otimes c_{\kappa,r}.
\end{align*}
Sum over $\sigma$ with determinant signs and use \eqref{eq:uij-dual-expansion}. The left side becomes $\lambda_\omega(Q_{\boldsymbol\mu})$, and the right side becomes $\operatorname{DEv}_{\varphi,\mathbf m}(W_{\aideal,\omega}^{[r]};\boldsymbol\mu)$, proving the formula.

Basis independence follows from Proposition~\ref{prop:general-Casimir-properties}. If $\omega'=u\omega$, the two trivializations differ by the unit $u^{-(r-1)}$, giving \eqref{eq:scalar-Weil-scaling-correct}. The Casimir tensor scaling law gives the corresponding evaluation identity.
\end{proof}

\section{The residue pairing and the Weil pairing on the punctured line}\label{sec:pairing}

This section specializes the general construction to $\A$. We compute the residue dual basis and expand the kernel $K_p$. We then identify the higher Casimir tensor with the higher Weil operator and with the determinant Weil pairing.

\subsection{The quotient, the dual basis, and the Casimir tensor}

Let \(p\in\A\setminus\Fq\) have exact pole order \(d\ge1\) at
\(P_\rho\), and put \(n=Nd=\dim_{\Fq}(\A/p\A)\). Its expansion in $(T_k)$ is
\begin{align*}
 p=\sum_{k=0}^{n}\alpha_kT_k,
 \qquad
 \alpha_n\ne0.
\end{align*}
The quotient is \(B_p=A_H/(p)\), and the representative functions are
\begin{align*}
 e_i=T_i^{(-1)}
 \qquad(0\le i<n).
\end{align*}

The divisor $D_p=d(P_0+\cdots+P_{N-1})-P_0$ equals
$D_{n-1}^{(-1)}$. The periodic list $\zeta^{(-1)},\ldots,\zeta^{(-(n-1))}$
contains $P_0$ exactly $d-1$ times and every other split point exactly
$d$ times. By Proposition~\ref{prop:factorial-basis-imrn} with
$s=-1$, the functions $e_0,\ldots,e_{n-1}$ form an $H$-basis of
$L(D_p)$. The following proposition carries this basis to the quotient
$B_p$.

\begin{proposition}\label{prop:quotient-basis-imrn}
The classes of \(e_0,\ldots,e_{n-1}\) form an \(H\)-basis of \(B_p\).
\end{proposition}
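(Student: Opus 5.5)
The plan is a dimension count followed by an injectivity argument. First, $\dim_H B_p=n$. Since $p\in\A$, we have $B_p=A_H/pA_H=(\A/p\A)\otimes_{\Fq}H$, and $\dim_{\Fq}(\A/p\A)=n=Nd$: the zero divisor of $p$ on $\mathbf P^1_{\Fq}$ has degree equal to the degree $Nd$ of its pole divisor $dP_\rho$, and $\A/p\A$ is the product of the local quotients at the finite zeros. Because there are exactly $n$ classes $e_0,\ldots,e_{n-1}$, it then suffices to show that they are linearly independent in $B_p$, i.e.\ that $L(D_p)\cap pA_H=0$, where $L(D_p)$ is regarded inside $A_H$.

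For the injectivity, suppose $f=\sum_i a_ie_i\in L(D_p)$ satisfies $f=pg$ with $g\in A_H$. Then
\begin{align*}
 \operatorname{div}(g)=\operatorname{div}(f)-\operatorname{div}(p)\ge -D_p-\operatorname{div}(p).
\end{align*}
Over $H$, the divisor of $p$ is $Z-d(P_0+\cdots+P_{N-1})$, where $Z\ge0$ is its zero divisor. The zeros of $p$ lie in $\mathbf P^1\setminus\{P_0,\ldots,P_{N-1}\}$, possibly including $P_\infty$, which is permitted since $A_H$ does not require regularity there. Substituting $D_p=d(P_0+\cdots+P_{N-1})-P_0$ gives
\begin{align*}
 \operatorname{div}(g)\ge -Z+P_0+Z=P_0 .
\end{align*}
More carefully, $\operatorname{div}(g)\ge -d\sum P_s+P_0-Z+d\sum P_s=P_0-Z$. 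This bound only says that $g$ vanishes at $P_0$ and may have poles along $Z$; but $g\in A_H$ is regular away from the $P_s$, so at points of $Z$ we get $\operatorname{div}(g)\ge0$ there. Hence $g$ has no poles at all on $\mathbf P^1_H$ and vanishes at $P_0$. A pole-free function on $\mathbf P^1_H$ is constant, so $g$ is a constant vanishing at a point, hence $g=0$ and $f=0$.

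The only delicate step is the divisor bookkeeping. One has to check that $P_\infty$, when it is a zero of $p$, causes no trouble: $e_i$ vanishes at $P_\infty$ for $i\ge1$ but $e_0=1$ does not, and this is harmless because the argument only uses that $g$ is regular on the affine part together with the pole bounds at the $P_s$. One also has to check that $P_0$ appears in $D_p$ with multiplicity $d-1$, which is exactly the statement $D_p=D_{n-1}^{(-1)}$ recorded before the proposition. Combining the dimension count with this injectivity shows that $L(D_p)\to B_p$ is an isomorphism, so the classes of $e_0,\ldots,e_{n-1}$ form an $H$-basis of $B_p$.
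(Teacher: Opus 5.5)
Your proof is correct and follows the paper's argument: you show $L(D_p)\to B_p$ is injective via the bounds $\operatorname{ord}_{P_0}(g)\ge1$ and $\operatorname{ord}_{P_s}(g)\ge0$ for $g=f/p$, and then compare with $\dim_H B_p=n$, the degree of the zero divisor of $p$. For cleanup, delete your first (incorrect) divisor inequality and the remark that $A_H$ ``does not require regularity'' at $P_\infty$: it does, since $P_\infty$ lies in $\mathbf P^1_H\setminus\{P_0,\ldots,P_{N-1}\}$, and your final step uses exactly this regularity of $g$ when $P_\infty$ is a zero of $p$.
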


\begin{proof}
The quotient map \(L(D_p)\to B_p\) is injective. If \(F\) lies in its
kernel, then \(F=ph\) for some \(h\in A_H\). At \(P_0\),
\begin{align*}
 \operatorname{ord}_{P_0}(F)\ge -(d-1),
 \qquad
 \operatorname{ord}_{P_0}(p)=-d,
\end{align*}
so \(\operatorname{ord}_{P_0}(h)\ge1\), and \(h\) vanishes at \(P_0\).
For \(s\ne0\), one has \(\operatorname{ord}_{P_s}(F)\ge-d\) and
\(\operatorname{ord}_{P_s}(p)=-d\), so \(\operatorname{ord}_{P_s}(h)\ge0\).
Since \(h\in A_H\) is regular away from the \(P_s\), it is a global regular
function on \(\mathbf P^1_H\) vanishing at \(P_0\), hence \(h=0\). Finally,
the zero divisor of \(p\) has degree \(n\), so \(\dim_H B_p=n\). Therefore
the injective map is an isomorphism.
\end{proof}

Since $A_H$ is a localization of $H[x]$, one has
$\Omega_{A_H}=A_H\,dx$. Put
$\Omega_p=p^{-1}\Omega_{A_H}/\Omega_{A_H}$ and choose its
generator $\omega_p=-dx/p$.
Multiplication by \(\omega_p\) identifies \(B_p\) with \(\Omega_p\).
For \(f,g\in A_H\), the residue pairing is
\begin{equation}\label{eq:res-pair-imrn}
 \langle\bar f\omega_p,\bar g\rangle_H
 =
 \sum_{s=0}^{N-1}\operatorname{Res}_{P_s}(fg\omega_p).
\end{equation}
The associated residue functional is
\begin{align*}
 \ell_p:B_p\longrightarrow H,
 \qquad
 \ell_p(\bar F)=\sum_{s=0}^{N-1}\operatorname{Res}_{P_s}(F\omega_p).
\end{align*}
It satisfies
\(\langle\bar f\omega_p,\bar g\rangle_H=\ell_p(\bar f\bar g)\).

Before computing the dual basis, we verify that \eqref{eq:res-pair-imrn}
depends only on the indicated quotient classes.

\begin{lemma}\label{lem:res-pair-well-defined}
The expression \eqref{eq:res-pair-imrn} depends only on the classes of
\(f\omega_p\) in \(\Omega_p\) and of \(g\) in \(B_p\).
\end{lemma}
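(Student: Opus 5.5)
We must show that changing $f\omega_p$ by an element of $\Omega_{A_H}$, or changing $g$ by an element of $pA_H$, leaves the sum \eqref{eq:res-pair-imrn} unchanged. The tool is the residue theorem on $\mathbf P^1_H$: the sum of residues over all closed points vanishes. The points of $\mathbf P^1_H$ where an element of $\Frac(A_H)\,dx$ can have poles while being regular on $\Spec A_H$ are only the split points $P_0,\ldots,P_{N-1}$, since $A_H$ is exactly the ring of functions regular away from them. The sum over $s$ in \eqref{eq:res-pair-imrn} is therefore the sum over all "infinite" points of $A_H$, and is minus the sum of residues over the affine part $\Spec A_H$.

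For the first claim, we replace $f\omega_p$ by $f\omega_p+\eta$ with $\eta=h\,dx\in\Omega_{A_H}$, $h\in A_H$. The change is $\sum_s\operatorname{Res}_{P_s}(gh\,dx)$. The form $gh\,dx$ is regular on $\Spec A_H$, because $gh\in A_H$ and $\Omega_{A_H}=A_H\,dx$. By the residue theorem this sum of residues at the $P_s$ therefore equals minus the sum over points of $\Spec A_H$, which is $0$.

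For the second claim, we replace $g$ by $g+pk$ with $k\in A_H$. The change is $\sum_s\operatorname{Res}_{P_s}(fpk\,\omega_p)=-\sum_s\operatorname{Res}_{P_s}(fk\,dx)$, since $p\omega_p=-dx$. Again $fk\,dx\in\Omega_{A_H}$, so the same argument gives $0$. Combining the two claims handles simultaneous changes by bilinearity.

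The only point needing care is the use of the residue theorem over the non-algebraically closed field $H$ at points of $\Spec A_H$ of higher degree. There we use $\operatorname{Res}_v=\Tr_{\kappa(v)/H}\circ\operatorname{res}_v$, or equivalently pass to $\bar H$. The $P_s$ are $H$-rational, so their residues need no trace. We should also note that $P_\infty$ lies in $\Spec A_H$, since $x$ is regular there, so it contributes nothing for regular forms. This bookkeeping is the main potential obstacle, and it is routine. Alternatively, the lemma follows directly from the well-definedness part of Proposition~\ref{thm:general-residue-duality} after base change to $H$, with $\infty$ replaced by the divisor $P_0+\cdots+P_{N-1}$.
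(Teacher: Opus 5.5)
Your proof is correct and is essentially the paper's argument: in each case the change in \eqref{eq:res-pair-imrn} is the sum of residues at $P_0,\ldots,P_{N-1}$ of a differential that is regular on $\Spec A_H$, and this sum vanishes by the global residue theorem on $\mathbf P^1_H$. The paper treats separately the replacement of $f$ by $f+ph_1$ and the addition of a regular differential, while you handle an arbitrary regular form $h\,dx$ in one step; this is only a cosmetic difference.
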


\begin{proof}
If \(f\) is replaced by \(f+ph_1\), with \(h_1\in A_H\), the change in the
differential is \(-h_1g\,dx\). This differential is regular away from
\(P_0,\ldots,P_{N-1}\). The sum of its residues at these points is zero by
the global residue theorem. The same argument applies when \(g\) is replaced
by \(g+ph_2\), with $h_2\in A_H$, and when a regular differential is added to \(f\omega_p\).
\end{proof}

To obtain the dual basis, we first isolate the coefficient of \(e_i(v)\) in the expansion of $K_p$. The following functions collect these coefficients.
For \(0\le i<n\), define
\begin{equation}\label{eq:D-i-imrn}
 \DD_i(t)
 =
 \sum_{k=i+1}^{n}
 \alpha_k\frac{T_{k-1}^{(-1)}(t)}{T_i(t)}.
\end{equation}
Formula \eqref{eq:shift-identity-imrn} gives
\begin{equation}\label{eq:D-i-closed-imrn}
 \DD_i(t)
 =
 \frac{1}{1+c_ix(t)}
 \sum_{m=0}^{n-i-1}
 \alpha_{i+m+1}T_m^{(-i-1)}(t).
\end{equation}
This closed form shows that \(\DD_i\in A_H\).

For the kernel $K_p$ of \eqref{eq:K-p-def}, the functions $\DD_i$
collect the coefficients of the basis functions. The next proposition
gives the finite expansion of $K_p$ and identifies the dual basis.

\begin{proposition}\label{thm:dual-Casimir-imrn}
The function \(K_p\) has the finite expansion
\begin{equation}\label{eq:K-dual-exp-imrn}
 K_p(u,v)
 =
 \sum_{i=0}^{n-1}\DD_i(u)e_i(v).
\end{equation}
The coefficients satisfy the duality relations
\begin{equation}\label{eq:dual-pair-imrn}
 \langle\DD_j\omega_p,e_i\rangle_H=\delta_{ij}.
\end{equation}
Thus $(\DD_j\omega_p)$ is the basis of $\Omega_p$ dual to the basis $(e_i)$ of $B_p$ relative to \eqref{eq:res-pair-imrn}. In particular, \eqref{eq:res-pair-imrn} is perfect, and the class of $K_p$ in $B_p\otimes_H B_p$ is the Casimir tensor of $B_p$.
\end{proposition}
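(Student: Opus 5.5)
The proof has three parts: first the expansion \eqref{eq:K-dual-exp-imrn}, then the duality relations \eqref{eq:dual-pair-imrn}, and finally the conclusions, which are formal consequences of these two.

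\textbf{The expansion.} By linearity in $p$, it suffices to expand $(T_k(u)-T_k(v))/(x(u)-x(v))$ for each $k\ge1$. For this I use the shift identity \eqref{eq:shift-identity-imrn} in the form $T_k=T_1\,T_{k-1}^{(-1)}=x\,T_{k-1}^{(-1)}$. I will prove the telescoping identity
\begin{equation*}
 \frac{T_k(u)-T_k(v)}{x(u)-x(v)}=\sum_{i=0}^{k-1}\frac{T_{k-1}^{(-1)}(u)}{T_i(u)}\,e_i(v)
\end{equation*}
by induction on $k$. The base case $k=1$ reads $1=1\cdot e_0$. For the inductive step, write $T_k=T_{k-1}\cdot x/(1+c_{k-1}x)$. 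Apply the two-point Leibniz rule \eqref{eq:dd-Leibniz-imrn}, using \eqref{eq:B-ad-imrn} with $m=1$ for the last factor. The new factors $e_i(v)=T_i^{(-1)}(v)$ must be generated by the successive factors $(t-\zeta^{(-\ell)})^{-1}$. Comparing with the product formula in Proposition~\ref{prop:factorial-dd-imrn}, specialized to $r=2$ with nodes $u,v$, should make this bookkeeping routine. An equivalent check is to verify that both sides agree after multiplying by $x(u)-x(v)$, using $x(u)-x(v)=(v-u)x(u)x(v)$ and a telescoping sum over $i$. Summing over $k$ with weights $\alpha_k$ and exchanging the order of summation then gives exactly $\sum_i \DD_i(u)e_i(v)$ with $\DD_i$ as in \eqref{eq:D-i-imrn}.

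\textbf{The duality relations.} The key input is that the pairing $\langle f\omega_p,g\rangle_H$ is a sum of residues at the $P_s$, which equals minus the sum of residues at the zeros of $p$. Consider the one-form in $u$
\begin{equation*}
 \eta_v=K_p(u,v)\,g(u)\,\omega_p(u)=-\frac{p(u)-p(v)}{p(u)\,(x(u)-x(v))}\,g(u)\,dx(u).
\end{equation*}
Apply the global residue theorem to $\eta_v$ as a function of $u$, and take $g=e_i$. The term $p(v)/p(u)$ vanishes modulo $p(v)$. What remains is the reproducing property
\begin{equation*}
 \sum_s\operatorname{Res}_{P_s}\bigl(K_p(\cdot,v)\,g\,\omega_p\bigr)\equiv g(v)\pmod{p(v)},
\end{equation*}
which arises from the simple pole at $x(u)=x(v)$ with residue $-g(v)$ relative to $-dx$, together with the contributions at $P_\infty$ and the $P_s$, where the term $\frac{-g\,dx}{x(u)-x(v)}$ must be tracked. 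By \eqref{eq:K-dual-exp-imrn}, the left side equals $\sum_j\langle \DD_j\omega_p,g\rangle_H\,e_j(v)$. Taking $g=e_i$, the identity reads $\sum_j\langle\DD_j\omega_p,e_i\rangle_H\,e_j(v)\equiv e_i(v)$ modulo $p(v)$. Proposition~\ref{prop:quotient-basis-imrn} says the $e_j$ are independent modulo $p$, so coefficients can be compared, and this yields $\delta_{ij}$.

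\textbf{The conclusions and the main obstacle.} Once the pairing matrix is the identity, it is invertible, so the pairing is perfect. The class of $K_p$, namely $\sum_i \DD_i\otimes e_i$, is then the Casimir tensor under the identification of $B_p$ with $\Omega_p$ given by $\omega_p$. The step I expect to be hardest is the residue bookkeeping in the reproducing identity, specifically verifying that $g\,dx/(x(u)-x(v))$ contributes nothing extra at $P_\infty$ and at the $P_s$ for $g=e_i\in L(D_p)$. This is exactly where the missing pole at $P_0$ in $D_p$, and the fact that $e_i$ vanishes at $P_\infty$ for $i>0$, should enter. If this bookkeeping becomes messy, the fallback is to compute $\langle\DD_j\omega_p,e_i\rangle_H$ directly as a single residue at $x=0$, i.e.\ at $P_0$, working in the local coordinate $x$.
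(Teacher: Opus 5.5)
Your argument is correct in outline and follows the paper's proof. The expansion comes from the telescoping identity \eqref{eq:T-rank2-telescope-imrn}. The reproducing property comes from the residue $-g(v)$ of $-g\,dx/(x(u)-x(v))$ along the diagonal. The duality relations then follow by comparing coefficients, using Proposition~\ref{prop:quotient-basis-imrn}.

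The step you single out as hardest is not an obstacle, and you have described it wrongly.
\begin{itemize}
\item $P_\infty$ contributes nothing, for every $g\in A_H$. The function $x$ is a local parameter there, $dx$ is regular, and $x(u)-x(v)=-x(v)\neq0$ at $u=P_\infty$.
\item The residues of $-g\,dx/(x(u)-x(v))$ at the $P_s$ are not supposed to vanish. They are the left side of the reproducing identity, and they sum to exactly $g(v)$.
\item The reproducing identity holds for all $g\in A_H$. The pole bound $D_p$ enters only through Proposition~\ref{prop:quotient-basis-imrn}, and the behaviour of $e_i$ at $P_\infty$ plays no role.
\end{itemize}

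The point that does need a line is discarding the $p(v)/p(u)$ term. You must check that $\sum_s\operatorname{Res}_{P_s}\bigl(g\,dx/(p(u)(x(u)-x(v)))\bigr)$ is regular in $v$, i.e.\ lies in $A_H$. This holds because near $P_0$ the kernel $1/(x(u)-x(v))$ expands in powers of $x(v)$. Near $P_s$ with $s\ge1$ it expands in powers of $(1+c_sx(v))^{-1}$. The residue extracts only finitely many of these coefficients. Computing the same quantity through the other poles would not work: the diagonal residue of that term is $g(v)$, which is not divisible by $p(v)$.

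The paper avoids this check by working relative to $\operatorname{Spec}B_p$. There the units $1+c_sx(v)$ make the diagonal section disjoint from the punctures.

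Your fallback is also misplaced. The point $x=0$ is $P_\infty$, not $P_0$. For $N>1$ the pairing is a sum of residues over all $N$ punctures, so no single residue computes it.
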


\begin{proof}
For \(k\ge1\), the factors defining $T_k$ satisfy the
summation identity
\begin{equation}\label{eq:T-rank2-telescope-imrn}
 \frac{T_k(u)-T_k(v)}{x(u)-x(v)}
 =
 T_{k-1}^{(-1)}(u)
 \sum_{i=0}^{k-1}
 \frac{T_i^{(-1)}(v)}{T_i(u)}.
\end{equation}
Indeed, successive cancellation in the sum gives the identity.
This identity is also the case \(r=2\) of
Proposition~\ref{prop:factorial-dd-imrn}. Multiplying
\eqref{eq:T-rank2-telescope-imrn} by \(\alpha_k\), summing over \(k\),
and collecting the coefficient of \(e_i(v)\) gives
\eqref{eq:K-dual-exp-imrn}.

The reproducing property of \(K_p\) in the
finite \(H\)-algebra \(B_p\) is
\begin{equation}\label{eq:reproducing-imrn}
 (\ell_p\tensor1)\bigl(F(u)K_p(u,v)\bigr)=F(v)
 \qquad(F\in A_H).
\end{equation}
Modulo \(p(v)\), one has
\begin{align*}
 F(u)K_p(u,v)\omega_p(u)
 =
 -\frac{F(u)\,dx(u)}{x(u)-x(v)}.
\end{align*}
Regard this as a differential on
\(\mathbf P^1_H\times\Spec B_p\), relative to \(\Spec B_p\).
It is regular away from the sections associated with the
punctures \(P_0,\ldots,P_{N-1}\) and from the section defined by
\(x(u)=x(v)\). Put \(a=x(v)\in B_p\). Since every \(1+c_sa\) is a unit,
this section is disjoint from the removed punctures. In the relative
parameter \(y=x(u)-a\), one has \(F(a+y)\in B_p[[y]]\). Thus the differential
is \(-F(a+y)\,dy/y\), whose residue is \(-F(a)=-F(v)\).
The relative residue theorem, applied coefficientwise after a choice
of \(H\)-basis of \(B_p\), gives \eqref{eq:reproducing-imrn}, also when
\(B_p\) is nonreduced, since relative residues commute with finite base change.

Taking \(F=e_j\) in \eqref{eq:reproducing-imrn} and inserting
\eqref{eq:K-dual-exp-imrn}, comparison of coefficients in the basis
\(e_0(v),\ldots,e_{n-1}(v)\) gives \eqref{eq:dual-pair-imrn}. Hence the
pairing is perfect.
\end{proof}

\begin{remark}[Constant extension and scalar coordinates]\label{rem:trace-descent-imrn}
The canonical identifications
\[
 \Omega_{(p)}\otimes_{\Fq}H\simeq\Omega_p,
 \qquad (\A/(p))\otimes_{\Fq}H\simeq B_p
\]
identify \eqref{eq:res-pair-imrn} with the scalar extension of
\eqref{eq:general-residue-pairing}. Indeed, the residue trace at
$P_\rho$ becomes the sum of the residues at $P_0,\ldots,P_{N-1}$.
This explains descent of the differential-valued pairing.

The chosen scalar coordinate depends on $\zeta$. More precisely,
\[
 \omega_{p,\zeta^q}
 =\left(\frac{t-\zeta}{t-\zeta^q}\right)^2\omega_{p,\zeta}.
\]
The factor is a unit of $B_p$. Thus
\eqref{eq:Casimir-scaling-general} and
\eqref{eq:scalar-Weil-scaling-correct} give the changes in the tensor
and in the scalar coordinate.
\end{remark}

\subsection{The explicit pairing formula}

This subsection reduces the higher Casimir tensor to the
higher Weil operator. The representative space is
\begin{align*}
 V_p
 =
 L\!\left(d\sum_{s=0}^{N-1}P_s-P_0\right)
 =
 \Span_H\{e_0,\ldots,e_{n-1}\},
\end{align*}
the quotient map \(V_p\to B_p\) is an isomorphism, and its
inverse is denoted \(\operatorname{rem}_{p}\).

\begin{definition}\label{def:pairing-operator-imrn}
Write $\operatorname{rem}_{p,t_r}$ for the remainder map
$\operatorname{rem}_p$ applied in the last variable $t_r$.
For \(r\ge2\), the higher tensor is
\begin{equation}\label{eq:W-pairing-imrn}
 \WW_{p,\zeta}^{[r]}
 =
 \operatorname{rem}_{p,t_r}\!\left(
 \prod_{j=1}^{r-1}K_p(t_j,t_r)
 \right)
 \in
 A_H^{\tensor_H(r-1)}\tensor_H V_p.
\end{equation}
It is the unique element of
$A_H^{\tensor_H(r-1)}\tensor_H V_p$ congruent to the product modulo $p(t_r)$.
\end{definition}

\begin{proposition}\label{thm:pairing-equals-O-imrn}
Let $\OO_{p,\zeta}^{[r]}$ be the higher Weil operator of
\eqref{eq:intro-O-revised}.
For every \(r\ge2\),
\begin{equation}\label{eq:W-equals-O-imrn}
 \WW_{p,\zeta}^{[r]}=\OO_{p,\zeta}^{[r]}.
\end{equation}
Moreover, $\OO_{p,\zeta}^{[r]}\in V_p^{\otimes_H r}$.
\end{proposition}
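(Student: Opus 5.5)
By Proposition~\ref{prop:anchored-congruence-imrn} with $\nu=r$, we have $\OO_p^{[r]}\equiv\prod_{j<r}K_p(t_j,t_r)\pmod{p(t_r)A_{H,r}}$, and $\OO_p^{[r]}\in A_{H,r}$ by Theorem~\ref{thm:universal-O-imrn}. Since $\WW_{p,\zeta}^{[r]}$ is characterized in Definition~\ref{def:pairing-operator-imrn} as the unique element of $A_H^{\otimes_H(r-1)}\otimes_H V_p$ congruent to that product modulo $p(t_r)$, the identity \eqref{eq:W-equals-O-imrn} follows once we know $\OO_p^{[r]}\in A_H^{\otimes(r-1)}\otimes_H V_p$. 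By symmetry of $\OO_p^{[r]}$, the same statement in every variable gives the stronger claim $\OO_p^{[r]}\in V_p^{\otimes_H r}$. This uses the fact that $\bigcap_i(A_H^{\otimes(i-1)}\otimes V_p\otimes A_H^{\otimes(r-i)})=V_p^{\otimes r}$ inside $A_{H,r}$, which is elementary linear algebra since $V_p\subset A_H$ is an $H$-subspace. So the whole proof reduces to a pole-order bound in one variable.

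\textbf{Key step.} Fix $t_1,\dots,t_{r-1}$ generic, so we work over the field $k=H(t_1,\dots,t_{r-1})$, and view $\OO_p^{[r]}$ as a function of $u=t_r$. By Proposition~\ref{prop:factorial-basis-imrn} base-changed to $k$, membership in $k\otimes V_p=L(D_p)$ means two things:
\begin{itemize}
\item the pole order is at most $d$ at each $P_s$ with $s\neq0$;
\item the pole order is at most $d-1$ at $P_0$, i.e.\ at $x=\infty$.
\end{itemize}
Also, elements of $A_{H,r}$ lying in $k\otimes V_p$ lie in $A_H^{\otimes(r-1)}\otimes V_p$, because the coefficients in the basis $e_i(t_r)$ are determined by finitely many evaluations/Laurent coefficients. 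Alternatively, use that $V_p$ has a complement spanned by $T$-type elements.

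\textbf{Computing the pole orders.} Use the barycentric form \eqref{eq:O-bary-imrn}. In the variable $u$, the term $\nu=r$ is $\prod_{i<r}p(t_i)/(x_i-x(u))$. It is regular at every $P_s$ with $s\ne0$ and vanishes at $x=\infty$ to order $r-1\ge1$. A term with $\nu\ne r$ equals
\[
\frac{p(u)}{x(u)-x_\nu}\prod_{i\ne\nu,r}\frac{p(t_i)}{x_i-x_\nu}.
\]
At $P_s$ with $s\ne0$, $x$ is finite, so its pole order is that of $p$, namely at most $d$. At $P_0$, $p$ has a pole of order $d$ in $x$, while $1/(x(u)-x_\nu)$ vanishes to order one. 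The pole order is therefore at most $d-1$. Summing the terms gives $\OO_p^{[r]}(\cdot,u)\in L(D_p)\otimes k$, as needed.

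\textbf{Main obstacle.} The delicate point is the descent from "coefficients in $k$" to "coefficients in $A_H^{\otimes(r-1)}$". I would handle it by expanding $\OO_p^{[r]}\in A_{H,r}$ in an $H$-basis of $A_H$ in the last variable adapted to $V_p$. Take $e_0,\dots,e_{n-1}$ followed by $p\cdot(\text{basis of }A_H)$; this is valid because $A_H=V_p\oplus pA_H$ by Proposition~\ref{prop:quotient-basis-imrn}. Uniqueness of the expansion over $k$ then forces the $pA_H$-components to vanish. This is because over $k$ we also have $k\otimes A_H=k\otimes V_p\oplus p\,(k\otimes A_H)$, and we have shown that $\OO_p^{[r]}$ lies in the first summand.
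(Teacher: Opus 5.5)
Your proposal is correct and follows the paper's proof. The anchored congruence of Proposition~\ref{prop:anchored-congruence-imrn}, together with uniqueness, reduces the statement to pole bounds in the last variable: at most $d-1$ at $P_0$ and at most $d$ at $P_s$ for $s\neq0$. Symmetry then upgrades this to membership in $V_p^{\otimes_H r}$.

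The differences are minor. You read off the pole orders from the barycentric form \eqref{eq:O-bary-imrn} rather than from the alternant divided by the Vandermonde. You also spell out two steps that the paper only asserts: the descent from coefficients in $H(t_1,\ldots,t_{r-1})$ to coefficients in $A_H^{\otimes_H(r-1)}$ via $A_H=V_p\oplus pA_H$, and the identity $\bigcap_i\bigl(A_H^{\otimes(i-1)}\otimes V_p\otimes A_H^{\otimes(r-i)}\bigr)=V_p^{\otimes r}$.
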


\begin{proof}
Proposition~\ref{prop:anchored-congruence-imrn} shows that the product
in \eqref{eq:W-pairing-imrn} and \(\OO_{p,\zeta}^{[r]}\) have the same
class modulo \(p(t_r)\). We show that the latter already belongs to
the selected representative space in the last variable.

At \(P_0\), the entries in the last column of the alternant
\(\Alt(1,p,xp,\ldots,x^{r-2}p)\) have pole orders at most
$0,d,d+1,\ldots,d+r-2$.
The factor \(\prod_{i<r}(x_r-x_i)\) in the Vandermonde has pole order
\(r-1\), so the quotient has pole order at most \(d-1\) at \(P_0\).
At \(P_s\), \(s\ne0\), the coordinate \(x_r\) is regular and the numerator has pole order at most \(d\). After specialization at \(P_s\), each \(x_r-x_i\) is a nonzero scalar multiple of \(1+c_sx_i\). So it is a unit in the coefficient ring \(A_H^{\tensor_H(r-1)}\). The pole order is therefore at most \(d\). There are no other poles by
Theorem~\ref{thm:universal-O-imrn}. These pole bounds define the
$H$-linear subspace $V_p\subset A_H$. Since the operator belongs to
$A_{H,r}$, the bounds imply
\[
 \OO_{p,\zeta}^{[r]}
 \in A_H^{\tensor_H(r-1)}\tensor_H V_p.
\]
Uniqueness proves \eqref{eq:W-equals-O-imrn}.
Symmetry gives the same pole bounds in every variable, hence
$\OO_{p,\zeta}^{[r]}\in V_p^{\otimes_H r}$.
\end{proof}

Let $\pideal=(p)$ be a principal prime away from the
characteristic, let $\varphi$ be a rank $r$ Drinfeld $\A$-module, and
fix the determinant data $(\psi,\delta_{\varphi,\pideal})$ of
Section~\ref{sec:motive-Weil}. After the constant extension to $H$,
Proposition~\ref{thm:dual-Casimir-imrn} identifies, for the generator
$\omega_p$ of $\Omega_p$, the Casimir tensor
$C_{\pideal,\omega_p}=\overline{K_p(u,v)}$. Consequently,
\begin{equation}\label{eq:general-W-is-split-O}
 W_{\pideal,\omega_p}^{[r]}
 =\prod_{j=1}^{r-1}\overline{K_p(t_j,t_r)}
 =\overline{\OO}_{p,\zeta}^{[r]},
\end{equation}
where the last equality is Proposition~\ref{thm:pairing-equals-O-imrn}.
Specializing the general dual basis formula to $\pideal$ gives a
determinant evaluation formula for any motive basis. If $1,\tau,\ldots,\tau^{r-1}$ is a basis of the finite motive,
the corresponding determinant is a Moore determinant. We do not need
this additional basis assumption.

The formula does not require a Frobenius basis. Write $\Cinf$ for the completion of an algebraic closure of $K=\Fq(t)$ at $P_\rho$, let $L\subset\Cinf$ be a definition field for $\varphi$, $\psi$, and the chosen determinant realization, and take a finite separable extension $k/(LH)$ containing the $\pideal$-torsion of both modules. Over $B_{\pideal,k}$, choose a basis $\mathbf m=(m_1,\ldots,m_r)$ of the scalar-extended finite motive and put $m_\psi=\delta_{\varphi,\pideal}(m_1\wedge\cdots\wedge m_r)$. With $m_\psi$ as the $k$-linear evaluation of Section~\ref{sec:motive-Weil}, this is the specialization of Proposition~\ref{thm:general-dual-basis-formula}.

\begin{proposition}[A formula for the Weil pairing]
\label{thm:actual-Weil-formula}
For \(\mu_1,\ldots,\mu_r\in\varphi[\pideal]\), the scalar coordinate of the determinant Weil pairing determined by \(\omega_p\) satisfies
\begin{equation}\label{eq:actual-Weil-formula}
 m_\psi\!\left(
 \operatorname{Weil}_{\pideal,\omega_p}
 (\mu_1,\ldots,\mu_r)
 \right)
 =
 \operatorname{DEv}_{\varphi,\mathbf m}
 \left(
 \overline{\OO}_{p,\zeta}^{[r]};
 \mu_1,\ldots,\mu_r
 \right).
\end{equation}
\end{proposition}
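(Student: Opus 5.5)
The proof will be a specialization of Proposition~\ref{thm:general-dual-basis-formula} to $X=\mathbf P^1_{\Fq}$, $\infty=P_\rho$, $A_X=\A$ and $\aideal=\pideal=(p)$, followed by a substitution of the explicit higher Casimir tensor from \eqref{eq:general-W-is-split-O}. First I would check the hypotheses of the general formula. $\pideal$ is a principal prime away from the characteristic, so Lemma~\ref{lem:finite-motive-structure} and Proposition~\ref{thm:Weil-isomorphism} apply. The chosen $k$ is a finite separable extension of $LH$ containing $\varphi[\pideal]$ and $\psi[\pideal]$. The basis $\mathbf m$ of the finite motive over $B_{\pideal,k}$ exists because that module is free of rank $r$. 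Next I would use Remark~\ref{rem:trace-descent-imrn} to identify $B_{\pideal,k}=(\A/(p))\otimes_{\Fq}k$ with $B_p\otimes_H k$ and $\Omega_{\pideal}\otimes_{\Fq}k$ with $\Omega_p\otimes_Hk$. Under these identifications the scalar-extended residue pairing \eqref{eq:general-residue-pairing} becomes \eqref{eq:res-pair-imrn}, and $\omega_p=-dx/p$ is a generator, since multiplication by $\omega_p$ identifies $B_p$ with $\Omega_p$. The functional $\lambda_{\omega_p}$ is then $\ell_p$, extended $k$-linearly.

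Once the hypotheses are in place, Proposition~\ref{thm:general-dual-basis-formula} gives
\[
 m_\psi\bigl(\operatorname{Weil}_{\pideal,\omega_p}(\mu_1,\ldots,\mu_r)\bigr)
 =\operatorname{DEv}_{\varphi,\mathbf m}\bigl(W^{[r]}_{\pideal,\omega_p};\mu_1,\ldots,\mu_r\bigr).
\]
It remains to replace $W^{[r]}_{\pideal,\omega_p}$ by $\overline{\OO}^{[r]}_{p,\zeta}$.

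For this I would invoke Proposition~\ref{thm:dual-Casimir-imrn}. The functions $e_i$ and $\DD_i$ form dual bases for $(a,b)\mapsto\ell_p(ab)$, because $\langle\DD_j\omega_p,e_i\rangle_H=\ell_p(\DD_je_i)=\delta_{ij}$. Hence $C_{\pideal,\omega_p}=\sum_i\overline{\DD_i}\otimes\overline{e_i}$. Expansion \eqref{eq:K-dual-exp-imrn} shows that this is the class of $K_p(u,v)$. By basis independence in Proposition~\ref{prop:general-Casimir-properties}, the same tensor is obtained after extending scalars from $H$ to $k$. Definition \eqref{eq:general-higher-Casimir} then expresses $W^{[r]}_{\pideal,\omega_p}$ as the class of $\prod_{j<r}K_p(t_j,t_r)$. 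Proposition~\ref{thm:pairing-equals-O-imrn}, or directly the congruence of Proposition~\ref{prop:anchored-congruence-imrn} reduced modulo $p(t_1),\ldots,p(t_r)$, shows that this class equals $\overline{\OO}^{[r]}_{p,\zeta}$. This is \eqref{eq:general-W-is-split-O}. Since $\operatorname{DEv}$ is multilinear and independent of the chosen tensor presentation, substituting it yields \eqref{eq:actual-Weil-formula}.

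The main obstacle I expect is the bookkeeping of the constant extension. The general construction is over $\Fq$, with the residue taken at the closed point $P_\rho$ and traced to $\Fq$. The explicit tensors live over $H$, with residues summed over $P_0,\ldots,P_{N-1}$. One must check that the scalar coordinate relative to the $H$-rational generator $\omega_p$ is exactly what the general formula computes after extension to $k\supset H$. Three points need care: $\Tr_{\kappa(P_\rho)/\Fq}\circ\operatorname{res}_{P_\rho}$, extended $H$-linearly, equals $\sum_s\operatorname{Res}_{P_s}$; the torsion action $\diamond_{\varphi,k}$ is compatible with the identification $B_{\pideal,k}\simeq B_p\otimes_Hk$; and the dual-basis characterization is insensitive to this base change. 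I would handle all three by working with the characterization \eqref{eq:higher-Casimir-characterization}, which is linear over $k$ and therefore commutes with scalar extension.
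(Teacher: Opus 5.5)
Your proposal is correct and takes the same route as the paper. The paper applies Proposition~\ref{thm:general-dual-basis-formula} with $\omega=\omega_p$ and the chosen basis $\mathbf m$, then substitutes \eqref{eq:general-W-is-split-O}, which it derives just before the statement from Proposition~\ref{thm:dual-Casimir-imrn} and Proposition~\ref{thm:pairing-equals-O-imrn} exactly as you do; your constant-extension bookkeeping via Remark~\ref{rem:trace-descent-imrn} spells out what the paper leaves implicit.
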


\begin{proof}
Apply Proposition~\ref{thm:general-dual-basis-formula} with
\(\omega=\omega_p\) and the chosen motive basis \(\mathbf m\).
Identity \eqref{eq:general-W-is-split-O} replaces the general higher
Casimir tensor by \(\overline{\OO}_{p,\zeta}^{[r]}\), giving
\eqref{eq:actual-Weil-formula}.
\end{proof}

\section{Analytic remainders and prime torsion}
\label{sec:drinfeld}
This section proves an exact division identity for the generating function of the specified normalized rank-one module $\Phi$ modulo a nonconstant element $p$. Its remainder coefficients are explicit operator expressions in an exponential division value. The last coefficient is a nonzero scalar multiple of that value. At a finite principal prime, after adjoining the residue-field element $\lambda$, the special value $\omega_\Phi(\lambda)$ generates the full prime-torsion extension.

\subsection{The normalized rank-one Drinfeld module}
\label{subsec:normalized-dual}

Let $\theta$ be the generic characteristic image of $t$, and
put $\Theta=(\theta-\zeta)^{-1}$ and
$f_\zeta(t)=(t-\zeta)^{-1}-\Theta$. The superscript $(j)$ applies $q^j$-Frobenius to all
coefficients, including $\theta$, and fixes $t$. Explicitly
$f_\zeta^{(j)}(t)=(t-\zeta^{q^j})^{-1}-\Theta^{q^j}$.

Let $\sigma$ act on $H(\theta)$ by $\sigma(\zeta)=\zeta^q$
and $\sigma(\theta)=\theta$. Put $s_0=1$ and
$s_j(t)=\prod_{k=0}^{j-1}f_\zeta^{(k)}(t)$. For $0\ne a\in\A$, let
$v_{P_\rho}(a)$ be the valuation at $P_\rho$ and put
$n(a)=-Nv_{P_\rho}(a)$. Each $a\in\A$ has a finite expansion
\begin{align*}
 a(t)=\sum_{j=0}^{n(a)}b_j(a)s_j(t),\qquad b_j(a)\in H(\theta),
\end{align*}
and its coefficients define the additive polynomial
$\Psi_a=\sum_{j=0}^{n(a)}b_j(a)\tau^j$ of
\cite[Definition~3.7]{HuHuangYau2026}, with $\Psi_0=0$.

Write $\Psi_a^{\sigma^2}=\sum_j\sigma^2(b_j(a))\tau^j$,
where $\sigma^2$ raises the elements of $H$ to their $q^2$-th powers
but fixes $\theta$, and choose $c\in\Cinf^\times$. The
constant $C$ and the normalized module $\Phi$ are then given by
\begin{align}\label{eq:Phi-explicit-normalization}
 c^{q-1}=\frac{\sigma^2(\Theta)}{\Theta},\qquad
 C=c\frac{\sigma(\Theta)}{\Theta},\qquad
 \Phi_a(z)=C^{-1}\Psi_a^{\sigma^2}(Cz).
\end{align}
Thus $C$ is the constant $C_{0,2}$ of
\cite[Definition~6.6]{HuHuangYau2026}, and changing $c$ by an element
of $\Fq^\times$ leaves $\Phi$ unchanged.

In power series, the formal logarithm and its inverse are
\begin{align*}
 \log_\Phi(z)
 &=z+\sum_{m\ge1}
 \frac{z^{q^m}}{\prod_{j=1}^m f_\zeta^{(j)}(\theta)},&
 \exp_\Phi(z)&=\sum_{m\ge0}\frac{z^{q^m}}{D_m^\Phi},
 \quad D_0^\Phi=1,
\end{align*}
where $D_m^\Phi\in\Cinf^\times$ are the denominators of the
exponential coefficients. The logarithm is used here as a formal power
series; the exponential converges on $\Cinf$.
$\Phi_a\exp_\Phi(z)=\exp_\Phi(a(\theta)z)$
\cite[Equations~(134)--(138)]{HuHuangYau2026}.

\begin{proposition}\label{prop:Psi-Phi-comparison}
Choose $\kappa\in\Cinf$ with $\kappa^{q-1}=-\Theta$. The period is
\begin{align}\label{eq:Phi-period-explicit}
 \wt\pi_\Phi
 =\kappa\Theta\prod_{m\ge1}
 \frac{\Theta^{q^m}}{\Theta^{q^m}-\sigma^m(\Theta)}.
\end{align}
Then the exponential lattice is
\begin{equation*}
 \ker\exp_\Phi=\wt\pi_\Phi\iota(\A).
\end{equation*}
\end{proposition}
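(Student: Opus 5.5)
The plan is to transport the known period of the unnormalized module $\Psi$ of \cite{HuHuangYau2026} through the normalization \eqref{eq:Phi-explicit-normalization}, and then check the product formula directly from the logarithm.

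First, I would relate the exponentials. Since $\Phi_a(z)=C^{-1}\Psi_a^{\sigma^2}(Cz)$, uniqueness of the exponential gives $\exp_\Phi(z)=C^{-1}\exp_{\Psi^{\sigma^2}}(Cz)$. Here the uniqueness is that of the $\Fq$-linear entire function tangent to the identity and satisfying the functional equation $\Phi_a\exp_\Phi=\exp_\Phi(a(\theta)\,\cdot)$. Hence $\ker\exp_\Phi=C^{-1}\ker\exp_{\Psi^{\sigma^2}}$. The module $\Psi^{\sigma^2}$ is obtained from $\Psi$ by applying $\sigma^2$ to the constants in $H$, which corresponds to replacing the chosen root $\zeta$ by $\zeta^{q^2}$. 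So its lattice is the conjugate lattice $\wt\pi_{\Psi,\zeta^{q^2}}\iota(\A)$. Note that $\iota(\A)$ is stable, since $\A$ is defined over $\Fq$ and $\sigma$ fixes $\theta$.

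Second, rather than tracking all constants through the cited equations, I would verify \eqref{eq:Phi-period-explicit} intrinsically. I would use the standard rank-one fact that the lattice is $\wt\pi\,\iota(\A)$, where $\wt\pi$ is the unique zero of $\exp_\Phi$ of the right norm. This is obtained, as in \cite[Equations~(134)--(138)]{HuHuangYau2026}, from the product expansion of $\log_\Phi$: the radius of convergence of $\log_\Phi$ together with the identity $\exp_\Phi(\log_\Phi z)=z$ pins down a shortest lattice vector. Concretely, I would compute the valuation of the coefficients $\prod_{j=1}^m f_\zeta^{(j)}(\theta)^{-1}$. Using
\[
f_\zeta^{(j)}(\theta)=(\theta-\zeta^{q^j})^{-1}-\Theta^{q^j}=\sigma^j(\Theta)-\Theta^{q^j},
\]
I would match the formula via the Anderson--Thakur-type infinite product
\[
\kappa\Theta\prod_{m\ge1}\Bigl(1-\sigma^m(\Theta)/\Theta^{q^m}\Bigr)^{-1}.
\]
This product converges because $|\Theta|>1$ at $P_\rho$, so that $|\Theta^{q^m}|\gg|\sigma^m(\Theta)|$. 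The check is that this value is annihilated by $\exp_\Phi$. I would do this through the generating-function route: the function $\omega(t)=\kappa\prod_{m\ge0}\bigl(1-f_\zeta^{(m)}(t)/\ldots\bigr)^{-1}$ satisfies the $\tau$-difference equation $\omega^{(1)}=f_\zeta(t)\,\omega$, up to the normalization $\kappa^{q-1}=-\Theta$. Its residue, or its value, then gives the period, exactly as for Carlitz.

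Third, I would show that the lattice is the full $\wt\pi\,\iota(\A)$ and not a sublattice. The kernel is a rank-one projective $\iota(\A)$-module, and the normalization via $C$ in \cite[Definition~6.6]{HuHuangYau2026} is chosen precisely so that it is free and generated by this element. So I would invoke that definition: the constant $C_{0,2}$ is designed to make the lattice $\wt\pi\,\iota(\A)$.

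The main obstacle is bookkeeping the constant $C=c\,\sigma(\Theta)/\Theta$ and the $\sigma^2$-twist so that the resulting product has exactly the displayed form, including the factor $\kappa\Theta$ and the sign in $\kappa^{q-1}=-\Theta$. I would first test the case $N=1$, where $\sigma$ is trivial and the formula must reduce to the Carlitz period, before doing the general bookkeeping.
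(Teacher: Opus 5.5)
\paragraph{Verdict: the proposal has a genuine gap.} Your first step, $\exp_\Phi(z)=C^{-1}\exp_{\Psi^{\sigma^2}}(Cz)$ and hence $\ker\exp_\Phi=C^{-1}\ker\exp_{\Psi^{\sigma^2}}$, is exactly the conjugation the paper uses. The gap is in everything after it. Two facts carry the proposition:
\begin{itemize}
\item $\ker\exp_{\Psi^{\sigma^2}}$ is free of rank one over $\iota(\A)$;
\item it is generated by $\wt\pi=c\kappa\,\sigma(\Theta)\prod_{m\ge1}\Theta^{q^m}/(\Theta^{q^m}-\sigma^m(\Theta))$.
\end{itemize}
The paper takes both from \cite[Theorem~3.32]{HuHuangYau2026}. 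After that, your ``main obstacle'' is one line: $\wt\pi/C=\kappa\Theta\prod_{m\ge1}\cdots$, since $C=c\,\sigma(\Theta)/\Theta$. Your proposal never supplies these two facts.

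\paragraph{The conjugation shortcut and the source of freeness.} The ``conjugate lattice'' shortcut is not valid. The automorphism $\sigma^2$ fixes $\theta$ and sends $\zeta$ to $\zeta^{q^2}$. We have $|\theta-\zeta|_\rho<1$, while $|\theta-\zeta^{q^2}|_\rho=1$ for $N\ge3$. So $\sigma^2$ is not continuous for $|\cdot|_\rho$ and cannot transport analytic periods. Computing the lattice of $\Psi^{\sigma^2}$ inside the fixed $\Cinf$ is precisely the content of the cited theorem.

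Step~3 also misplaces where freeness comes from. Conjugation by $C$ is a homothety, hence an $\iota(\A)$-module isomorphism of lattices, so it cannot make a lattice free. Since $\operatorname{Pic}(\A)\cong\mathbb Z/N\mathbb Z$, rank-one lattices need not be free for $N\ge2$. Freeness is therefore a genuine property of the twist $\Psi^{\sigma^2}$, and it needs a theorem, not an appeal to Definition~6.6.

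\paragraph{The intrinsic route in Step~2.} This route does not close the gap either. Your computation gives $|f_\zeta^{(j)}(\theta)|_\rho=q^{q^j}$, so $\log_\Phi$ has radius $q^{q/(q-1)}=|\wt\pi_\Phi|_\rho$. That is a correct consistency check. However, it only shows that $\wt\pi_\Phi$ has the norm of a shortest nonzero period, not that $\exp_\Phi(\wt\pi_\Phi)=0$.

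Membership is deferred to ``exactly as for Carlitz'', which is the entire content when $N\ge2$. Moreover, solutions of $\tau\omega=f_\zeta\omega$ are special functions of $\Psi$, not of $\Phi$ (see Section~\ref{sec:gauss}). The natural Anderson--Thakur correspondence, e.g.\ \cite[Theorem~3.11]{GazdaMaurischat2021}, relates them to $\Lambda_\Psi$, not to $\ker\exp_\Phi$.

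To reach $\exp_\Phi$ you would need two further ingredients:
\begin{itemize}
\item the inhomogeneous identity $\tau H_\Phi(z;t)=f_\zeta H_\Phi(z;t)+\exp_\Phi(z)$;
\item a rigidity argument showing that the product equals $H_\Phi(z_0;\cdot)$, with $z_0$ fixed by the residue at $t=\theta$. This requires propagating pole cancellation along $\theta^{q^m}$ and controlling behaviour near the punctures.
\end{itemize}
You cannot borrow this from Proposition~\ref{prop:rank-one-inputs}, because its proof uses the present statement to obtain homogeneity. Even with membership established, generation would still require the freeness discussed above.
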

\begin{proof}
For $N=1$, one has $\A=\Fq[x]$ and $\Phi_x=\Theta+\tau$. The displayed product and lattice are the usual Carlitz period and its kernel \cite{Goss1996}. Assume now that $N\ge2$.
The constant field twist $\Psi^{\sigma^2}$ has exponential
$\exp_{(2)}$ and period
\begin{align*}
 \wt\pi=c\kappa\,\sigma(\Theta)
 \prod_{m\ge1}\frac{\Theta^{q^m}}
 {\Theta^{q^m}-\sigma^m(\Theta)}
\end{align*}
with $\ker\exp_{(2)}=\wt\pi\iota(\A)$
by \cite[Theorem~3.32]{HuHuangYau2026}.
Conjugation gives
$\exp_\Phi(z)=C^{-1}\exp_{(2)}(Cz)$ and
$\wt\pi/C=\wt\pi_\Phi$, proving the assertion.
\end{proof}

\subsection{The generating function}
\label{subsec:generating-function}

Write $|\cdot|_\rho$ for the absolute value on $\Cinf$ that
extends the $P_\rho$-adic valuation, normalized on the branch with
$|\theta-\zeta|_\rho=q^{-1}$. Write $\iota(a)=a(\theta)$ for the
generic characteristic embedding and put
\begin{align*}
 \beta(u)=\frac{u-\zeta^{(1)}}{u-\zeta},\qquad
 \gamma_\zeta=\beta(\theta)^{-1}.
\end{align*}

The analytic arguments below take place on the affinoid domain
\begin{align*}
 \mathbb D_\zeta=\{t\mid |(t-\zeta^{(s)})^{-1}|_\rho\le1\ (0\le s<N)\}.
\end{align*}
It contains the ordinary point $P_\infty$, and
$\mathbb T_\zeta=\mathcal O(\mathbb D_\zeta)$ denotes the algebra
of analytic functions on it. The domain $\mathbb D_\zeta$ is
one-dimensional, and the functions $Y_s=(t-\zeta^{(s)})^{-1}$ generate
$\mathbb T_\zeta$ as an affinoid algebra and satisfy
$Y_s-Y_r=(\zeta^{(s)}-\zeta^{(r)})Y_sY_r$.

For $g\in\A\otimes_{\Fq}H$ and a fixed $\mu\in\Cinf$, the $H$-linear evaluation map is
\begin{equation*}
 g\diamond_{\varphi,H}\mu
 =
 \sum_{\nu=1}^{s}\lambda_\nu\,\varphi_{b_\nu}(\mu),
 \qquad
 g=\sum_{\nu=1}^{s}b_\nu\otimes\lambda_\nu\in\A\otimes_{\Fq}H,
\end{equation*}
extended \(\Fq\)-linearly from \(b\mapsto\varphi_b(\mu)\). It is
\(H\)-linear in \(g\) and satisfies
\((ga)\diamond_{\varphi,H}\mu
=g\diamond_{\varphi,H}\varphi_a(\mu)\) for $a\in\A$.

\begin{definition}[Exponential evaluation map \cite{HuHuangYau2026}]\label{def:exponential-action-revised}
For $z\in\Cinf$, regard $K\otimes_{\Fq}H$ as the copy
$\Fq(u)\otimes_{\Fq}H$ with independent variable $u$. For
$\xi(u)=g(u)/a(u)$, where $0\ne a\in\A$ and $g\in A_H$, set
\begin{align*}
 \EE_{\varphi,z}(\xi(u))=g\diamond_{\varphi,H}
 \exp_\varphi\left(\frac{z}{a(\theta)}\right).
\end{align*}
\end{definition}

Throughout, $z\in\Cinf$ is the input of the exponential,
$u$ is the variable of the rational kernel functions, and $t$ is the
analytic variable. The notation $\EE_{\varphi,z}(\xi(u))$ means that
the evaluation is applied in $u$, with $z$ fixed, and any dependence on $t$
is left unchanged.

\begin{lemma}[The evaluation series]\label{lem:exponential-action-series}
The evaluation map $\EE_{\varphi,z}$ is independent of the
chosen fraction, is $H$-linear in $\xi$, and satisfies
\begin{align}\label{Eq:EEefandds}
 \EE_{\Phi,z}(\xi)=\sum_{m\ge0}\frac{z^{q^m}}{D_m^\Phi}\,
 \xi(\theta^{q^m}),\qquad
 \EE_{\Phi,z}(\xi/a)=\EE_{\Phi,z/a(\theta)}(\xi)
 \quad(0\ne a\in\A).
\end{align}
In $\xi(\theta^{q^m})$, the coefficients in $H$ are fixed, not raised to $q^m$.
\end{lemma}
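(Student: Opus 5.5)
We have to prove Lemma~\ref{lem:exponential-action-series}: well-definedness, $H$-linearity, the series formula, and the division rule.

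\emph{Independence of the fraction.} If $g/a=g'/a'$ with $0\ne a,a'\in\A$ and $g,g'\in A_H$, then $ga'=g'a$ in $A_H$. Put $w=\exp_\varphi(z/(aa')(\theta))$. The functional equation $\varphi_b\exp_\varphi(y)=\exp_\varphi(b(\theta)y)$ gives
\[
 \exp_\varphi\bigl(z/a(\theta)\bigr)=\varphi_{a'}(w),\qquad \exp_\varphi\bigl(z/a'(\theta)\bigr)=\varphi_a(w).
\]
The compatibility $(ga)\diamond_{\varphi,H}\mu=g\diamond_{\varphi,H}\varphi_a(\mu)$ then yields
\[
 g\diamond\varphi_{a'}(w)=(ga')\diamond w=(g'a)\diamond w=g'\diamond\varphi_a(w).
\]
$H$-linearity follows by bringing two fractions to a common denominator and using the $H$-linearity of $g\mapsto g\diamond\mu$.

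\emph{Division rule.} For $\xi=g/b$ we have $\xi/a=g/(ab)$. Since $z/(ab)(\theta)=(z/a(\theta))/b(\theta)$, the rule $\EE_{\Phi,z}(\xi/a)=\EE_{\Phi,z/a(\theta)}(\xi)$ is immediate from the definition.

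\emph{Series formula.} This is the substantive part, and the strategy is to reduce it to the case $a=1$ and then to generators of $A_H$. By the division rule, the right-hand side should satisfy the same rule. Indeed,
\[
 \sum_m \frac{(z/a(\theta))^{q^m}}{D_m^\Phi}\,\xi(\theta^{q^m})=\sum_m \frac{z^{q^m}}{D_m^\Phi}\,\frac{\xi(\theta^{q^m})}{a(\theta^{q^m})},
\]
because $a$ has coefficients in $\Fq$, so $a(\theta)^{q^m}=a(\theta^{q^m})$. It therefore suffices to prove $g\diamond_{\Phi,H}\exp_\Phi(z)=\sum_m z^{q^m}\,g(\theta^{q^m})/D_m^\Phi$ for $g\in A_H$, with the $H$-coefficients of $g$ fixed.

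Both sides are $H$-linear in $g$, so we may take $g=b\in\A$. This reduces the claim to
\[
 \Phi_b(\exp_\Phi(z))=\sum_m \frac{z^{q^m}\,b(\theta)^{q^m}}{D_m^\Phi}=\exp_\Phi(b(\theta)z),
\]
which is exactly the functional equation. The order of operations matters here: the $H$-scalars must be placed outside the additive polynomial, with $g=\sum b_\nu\otimes\lambda_\nu$ acting as $\sum\lambda_\nu\Phi_{b_\nu}(\mu)$. This matches "coefficients in $H$ fixed."

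The main obstacle is convergence of $\sum_m z^{q^m}\xi(\theta^{q^m})/D_m^\Phi$ when $\xi$ has poles. The values $\theta^{q^m}$ stay away from the poles $\zeta^{(s)}$, since $|\theta^{q^m}-\zeta^{(s)}|_\rho$ is controlled by $|\theta-\zeta|_\rho$ under Frobenius conjugation. Hence $|\xi(\theta^{q^m})|_\rho$ grows at most like $C^{q^m}$. The factorial-type decay of $1/D_m^\Phi$ (entireness of $\exp_\Phi$) then gives absolute convergence and justifies the term-by-term manipulations above.
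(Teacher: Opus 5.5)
Your proof is correct and is essentially the paper's argument. The functional equation $\varphi_b\exp_\varphi(V)=\exp_\varphi(b(\theta)V)$ gives independence of the fraction and $H$-linearity, and the series formula comes from expanding the finitely many entire exponentials $\exp_\Phi(b_\nu(\theta)z/a(\theta))$; you only reorganize this through a preliminary reduction to $a=1$.

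Your separate convergence paragraph is unnecessary, because the series is a finite $H$-combination of convergent exponential series. Its claim that the points $\theta^{q^m}$ stay away from the punctures is also false, since $|\theta^{q^m}-\zeta^{(m)}|_\rho=q^{-q^m}\to0$. The bound $|\xi(\theta^{q^m})|_\rho\le C^{q^m}$ still holds, for example because $g(\theta^{q^m})=\sum_\nu\lambda_\nu b_\nu(\theta)^{q^m}$ and $a(\theta^{q^m})=a(\theta)^{q^m}\neq0$.
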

\begin{proof}
Write the two fractions $\xi=g_1/a_1=g_2/a_2$ of
Definition~\ref{def:exponential-action-revised} with a common
denominator. The functional equation
$\varphi_b\exp_\varphi(V)=\exp_\varphi(b(\theta)V)$ of the
exponential then gives the same value for both and proves additivity
and $H$-linearity. Write $g=\sum_\nu b_\nu\otimes c_\nu$ with $b_\nu\in\A$ and $c_\nu\in H$. Expanding each of the finitely many exponentials gives
\begin{align*}
 \sum_\nu c_\nu\exp_\Phi\left(\frac{b_\nu(\theta)z}{a(\theta)}\right)
 &=\sum_{m\ge0}\frac{z^{q^m}}{D_m^\Phi}
 \frac{\sum_\nu c_\nu b_\nu(\theta^{q^m})}{a(\theta)^{q^m}}.
\end{align*}
Since $a\in\Fq(t)$, $a(\theta)^{q^m}=a(\theta^{q^m})$. This proves \eqref{Eq:EEefandds}. Convergence follows from the finite sum of entire exponentials.
\end{proof}

\begin{definition}[The generating function {\cite[Definitions~6.20--6.21]{HuHuangYau2026}}]\label{def:H-Phi-revised}
For $z\in\Cinf$, the generating function is
\begin{equation}\label{eq:H-Pellarin-series}
 H_\Phi(z;t)=\gamma_\zeta\sum_{m\ge0}\frac{z^{q^m}}{D_m^\Phi}
 \frac{\beta(\theta^{q^m})}{x(\theta^{q^m})-x(t)}.
\end{equation}
On domains of convergence we abbreviate this series by
\begin{align*}
 \gamma_\zeta\EE_{\Phi,z}\left(\frac{\beta(u)}{x_u-x_t}\right).
\end{align*}
\end{definition}

\begin{proposition}[Analytic domain and convergence]\label{prop:H-convergence}
For every $z\in\Cinf$, \eqref{eq:H-Pellarin-series} converges in $\mathbb T_\zeta$. It defines a meromorphic function on the rigid-analytic space
\begin{align}\label{eq:mero-domain}
 \left(\mathbf P^1_{\Cinf}\setminus\{\zeta^{(s)}:0\le s<N\}\right)^{\rm an},
\end{align}
obtained from the projective line by removing the $N$ punctures,
with at most simple poles at $\theta^{q^m}$.
\end{proposition}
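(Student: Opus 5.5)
The plan is to reduce convergence to two estimates: a growth bound on the exponential coefficients, and a uniform bound on the kernel factor
\[
 k_m(t)=\frac{\beta(\theta^{q^m})}{x(\theta^{q^m})-x(t)}
\]
on $\mathbb D_\zeta$. Then $\sum_m (z^{q^m}/D_m^\Phi)\,k_m$ converges in the supremum norm of $\mathbb T_\zeta$.

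\textbf{Step 1: rewrite the kernel.} Since $x(u)-x(t)=(t-u)/((u-\zeta)(t-\zeta))$, we get
\[
 k_m(t)=\frac{(\theta^{q^m}-\zeta^{(1)})(t-\zeta)}{t-\theta^{q^m}}
 =(\theta^{q^m}-\zeta^{(1)})\Bigl(1+\frac{\theta^{q^m}-\zeta}{t-\theta^{q^m}}\Bigr).
\]
In terms of the generators $Y_s$, we have $1/(t-\theta^{q^m})=Y_0/(1-(\theta^{q^m}-\zeta)Y_0)$.

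\textbf{Step 2: size of $\theta^{q^m}-\zeta$.} Note that $\theta^{q^m}-\zeta^{(m)}=(\theta-\zeta)^{q^m}$, which has absolute value $q^{-q^m}$. Hence $|\theta^{q^m}-\zeta|_\rho=|\zeta^{(m)}-\zeta|_\rho$, and this equals $1$ whenever $N\nmid m$, because distinct constants in $H$ differ by units. When $N\mid m$ it is tiny. So for $N\nmid m$, the point $\theta^{q^m}$ lies in a residue disc of $\mathbb D_\zeta$ near the puncture $\zeta^{(m)}$. That disc is not removed, since $\mathbb D_\zeta$ only requires $|t-\zeta^{(s)}|\ge1$. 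Consequently $k_m$ has a pole inside the affinoid unless $m$ is large.

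This forces a correction to the naive claim of convergence in $\mathbb T_\zeta$: $k_m$ is \emph{not} in $\mathbb T_\zeta$ for any finite $m$. I would therefore instead show that $\theta^{q^m}\notin\mathbb D_\zeta$. Indeed, $|t-\zeta^{(m)}|=q^{-q^m}<1$ at $t=\theta^{q^m}$. So each $k_m$ is a rational function with its single pole outside $\mathbb D_\zeta$, hence $k_m\in\mathbb T_\zeta$.

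\textbf{Step 3: uniform bound on $k_m$.} On $\mathbb D_\zeta$, write $t-\theta^{q^m}=(t-\zeta^{(m)})-(\theta-\zeta)^{q^m}$. The first term has absolute value at least $1$ and the second is tiny, so $|t-\theta^{q^m}|=|t-\zeta^{(m)}|$. Likewise, $|t-\zeta|/|t-\zeta^{(m)}|\le\max(1,|t-\zeta|)/|t-\zeta^{(m)}|$ is bounded by $1$ on $\mathbb D_\zeta$, because the ratio of two such linear forms is a unit-type function there. This gives $\|k_m\|\le|\theta^{q^m}-\zeta^{(1)}|\le 1$.

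\textbf{Step 4: coefficient growth.} The remaining input is the bound $|z^{q^m}/D_m^\Phi|\to0$ for every $z$. This is exactly the entireness of $\exp_\Phi$ on $\Cinf$, which was asserted above. Combining with Step 3, the series converges in $\mathbb T_\zeta$.

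\textbf{Step 5: meromorphic continuation.} For the extension to the space \eqref{eq:mero-domain}, I cover it by the affinoids
\[
 \mathbb D_\zeta(R)=\{|(t-\zeta^{(s)})^{-1}|\le R\ \text{for all } s\}.
\]
On $\mathbb D_\zeta(R)$, only finitely many $\theta^{q^m}$ lie inside, namely those with $q^{-q^m}\ge R^{-1}$. For those finitely many $m$ the term $k_m$ contributes a simple pole at $\theta^{q^m}$. The tail converges uniformly by the same estimate, now with $\|k_m\|\le R\,|\theta^{q^m}-\zeta^{(1)}|$ up to a constant, which is still dominated by the decay of $z^{q^m}/D_m^\Phi$. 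Compatibility on overlaps is automatic since all pieces are given by the same series.

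\textbf{Main obstacle.} The key obstacle is Step 3's uniform estimate near the punctures for $m$ with $N\nmid m$. The poles $\theta^{q^m}$ accumulate toward the punctures $\zeta^{(m\bmod N)}$, so I would carefully use the identity $\theta^{q^m}-\zeta^{(m)}=(\theta-\zeta)^{q^m}$ there.
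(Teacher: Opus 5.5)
Your argument is correct and is essentially the paper's proof. It uses the same closed form $k_m(t)=(\theta^{q^m}-\zeta^{(1)})(t-\zeta)/(t-\theta^{q^m})$, the same ultrametric comparisons $|t-\theta^{q^m}|=|t-\zeta^{(m)}|\ge1$ and $|t-\zeta|\le|t-\zeta^{(m)}|$ on $\mathbb D_\zeta$ giving $\|k_m\|\le1$, and entireness of $\exp_\Phi$. For the continuation it likewise exhausts the punctured line by pieces containing only finitely many poles $\theta^{q^m}$; your explicit family $\mathbb D_\zeta(R)$ with tail bound $\|k_m\|\le R$ is a tidy version of the paper's subdomain argument.

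Do delete the first half of Step~2, including the claim that $k_m\notin\mathbb T_\zeta$. The open disc $|t-\zeta^{(m)}|<1$ containing $\theta^{q^m}$ is precisely what $\mathbb D_\zeta$ excludes, as your own next sentence shows. So every $k_m$ lies in $\mathbb T_\zeta$, including at $P_\infty$, where $k_m$ takes the value $\theta^{q^m}-\zeta^{(1)}$, and the statement needs no correction.
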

\begin{proof}
Let $a_m=\theta^{q^m}$ and let
\begin{align*}
 k_m(t)=\frac{\beta(a_m)}{x(a_m)-x(t)}
 =\frac{(a_m-\zeta^{(1)})(t-\zeta)}{t-a_m}.
\end{align*}
The expression on the right is regular at $P_\infty$, with value $a_m-\zeta^{(1)}$. The distance from $a_m$ to $\zeta^{(m)}$ is $q^{-q^m}<1$, while $|a_m-\zeta^{(1)}|\le1$. On $\mathbb D_\zeta$, the ultrametric inequality gives
$|t-a_m|=|t-\zeta^{(m)}|\ge1$ and $|t-\zeta|\le|t-\zeta^{(m)}|$. Hence $\|k_m\|\le1$. Since the exponential is entire, $|z^{q^m}/D_m^\Phi|\to0$, and the series converges in the complete affinoid algebra.

On an affinoid subdomain of \eqref{eq:mero-domain} that
avoids the poles $\theta^{q^m}$, the finitely many initial terms
$k_0,\ldots,k_{m_0}$ are bounded. For $m>m_0$ each $a_m$ lies in a
small disc around one of the removed points $\zeta^{(s)}$, and the
rational functions $k_m$ are then uniformly bounded on the subdomain.
Near $P_\infty$ the same bound holds in the coordinate $t^{-1}$.
Together with $|z^{q^m}/D_m^\Phi|\to0$ this gives normal convergence
on \eqref{eq:mero-domain}. For each $m$, a sufficiently small disc around $a_m$ contains
no other $a_j$. On this disc the sum over $j\ne m$ is analytic, and
the $m$-th term has at most a simple pole. This proves the meromorphic assertion.
The accumulation $\theta^{q^{s+kN}}\to\zeta^{(s)}$ explains why the
punctures $\zeta^{(s)}$ are removed in \eqref{eq:mero-domain}.
\end{proof}

The proof of Proposition~\ref{prop:rank-one-inputs} below works in
$\mathbb T_\zeta$ and uses its Frobenius fixed ring. For this we
compare $\mathbb T_\zeta$ with the completed tensor product used in
\cite[Definitions~2.1 and~2.4]{GazdaMaurischat2021}. The comparison
identifies the two $\A$-actions, determines the fixed ring
$\mathbb T_\zeta^{\tau=1}$, and justifies the evaluation maps used
in Section~\ref{sec:gauss}.

Choose an $\Fq$-basis $(a_i)_{i\ge0}$ of $\A$. For a
finite sum in $\A\otimes_{\Fq}\Cinf$ put
\[
 \left\|\sum_i a_i\otimes c_i\right\|_{\mathrm G}
 =\max_i|c_i|_\rho.
\]
This maximum is a norm on $\A\otimes_{\Fq}\Cinf$, independent of
the chosen basis. $\A\widehat\otimes_{\Fq}\Cinf$ denotes its
completion.

\begin{lemma}[Comparison of the analytic algebras]
\label{lem:Tate-algebra-comparison}
 The map
\begin{equation}\label{eq:Tate-algebra-comparison}
 \jmath:\A\widehat\otimes_{\Fq}\Cinf
 \xrightarrow{\ \sim\ }\mathbb T_\zeta,
 \qquad a\otimes c\longmapsto c\,a(t),
\end{equation}
is an isometric $\Cinf$-algebra isomorphism. For every
$a\in\A$,
\begin{equation*}
 \jmath((ab)\otimes c)=a(t)\,\jmath(b\otimes c),
 \qquad
 \jmath\bigl(b\otimes\Psi_a(c)\bigr)=\Psi_a\bigl(\jmath(b\otimes c)\bigr),
\end{equation*}
where the second $\Psi_a$ acts on the coefficients of the function,
and
\(
 \jmath(b\otimes c^{q})=\tau\bigl(\jmath(b\otimes c)\bigr).
\)
\end{lemma}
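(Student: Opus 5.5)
The plan is to build $\jmath$ on the algebraic level first, show it is an isometry for $\|\cdot\|_{\mathrm G}$ against the sup norm on $\mathbb D_\zeta$, and then pass to completions.

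\emph{Step 1: a convenient basis.} Extend scalars to $H\subset\Cinf$; since $\Cinf\supset H$, we have $\A\otimes_{\Fq}\Cinf=A_H\otimes_H\Cinf$. Basis independence of $\|\cdot\|_{\mathrm G}$ is routine: any two $\Fq$-bases differ by an invertible matrix over $\Fq$, whose entries all have absolute value at most $1$. For the same reason, passing to an $H$-basis of $A_H$ with $\Fq$-structure preserves the norm up to equality; here I would use that $H/\Fq$ is unramified at $P_\rho$ and that the norm is invariant under $\mathrm{GL}_N(\Fq)$ changes of basis of $H$ over $\Fq$. I would then take the $H$-basis of $A_H$ consisting of the monomials $Y_s^k$, that is, $1$ together with $Y_s^{k}$ for $0\le s<N$ and $k\ge1$. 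This is a basis by partial fractions, since Proposition~\ref{prop:factorial-basis-imrn} shows $A_H$ is spanned by functions with poles only at the $\zeta^{(s)}$.

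\emph{Step 2: the isometry on finite sums.} I must show that for $f=c_0+\sum_{s,k\ge1}c_{s,k}Y_s^k$ the sup norm on $\mathbb D_\zeta$ equals $\max|c|$. The inequality $\le$ is immediate, since $|Y_s|\le1$ on the domain. For $\ge$ I would use the reduction of $\mathbb D_\zeta$. The residue classes are obtained by removing the $N$ open unit discs around the $\zeta^{(s)}$; these discs are distinct because the $\zeta^{(s)}$ are distinct in the residue field $\mathbb F_{q^N}$. The canonical reduction of the domain is therefore $\mathbf P^1_{\bar{\mathbb F}_q}$ minus $N$ points. The reductions $\bar Y_s=(\bar t-\bar\zeta^{(s)})^{-1}$ then have linearly independent powers over $\bar{\mathbb F}_q$, again by partial fractions. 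So if $\max|c|=1$, the reduction $\bar f\ne0$, which forces the sup norm to be $1$. This is the main obstacle: I must make sure the reduction argument is carried out cleanly, using that the sup norm equals the spectral norm and that $\mathbb D_\zeta$ is a connected affinoid with the stated reduction.

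\emph{Step 3: completion and surjectivity.} Since $\jmath$ is an isometry on a dense-image map, it extends to an isometric embedding of the completion. Surjectivity follows because the $Y_s$ generate $\mathbb T_\zeta$ as an affinoid algebra, so polynomials in the $Y_s$, which equal $A_H\otimes\Cinf$, are dense in $\mathbb T_\zeta$. Multiplicativity holds on the dense subalgebra and passes to limits.

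\emph{Step 4: the compatibilities.} These are checked on pure tensors and then extended by continuity. The identity $\jmath(ab\otimes c)=a(t)c\,b(t)$ is clear. For $\tau$, acting on coefficients, $\jmath(b\otimes c^q)=c^q b(t)=\tau(c\,b(t))$, because $b$ has coefficients in $\Fq$. Note that in the $H$-basis $\tau$ also moves the $H$-coefficients, but on the $\Fq$-form this is fine. Finally $\Psi_a$ is a finite combination of the $\tau^j$ with coefficients acting on $c$, so it commutes with $\jmath$ by $\Cinf$-linearity together with the $\tau$ identity. Continuity of $\tau$, which is isometric, gives the identities on completions.
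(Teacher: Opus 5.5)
Your proof is correct. It differs from the paper's mainly in the basis used to prove the isometry.

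\textbf{The paper's route.} The paper stays with an arbitrary $\Fq$-basis $(a_i)$ of $\A$. It proves the upper bound by showing $\|a\|_{\mathbb D_\zeta}\le1$ for every $a\in\A$: it writes $a=f/\rho^d$ with $\deg f\le Nd$ and compares absolute values for $|t|_\rho\le1$, for $|t|_\rho>1$, and at $P_\infty$. For the lower bound, it reduces the normalized coefficients to $\widetilde c_i\in\overline{\Fq}$. It notes that $\sum_i\widetilde c_ia_i$ is a nonzero rational function, because linear independence of the $a_i$ survives extension to $\overline{\Fq}$. It then evaluates at a point $\eta\in\overline{\Fq}$ away from the zeros of this function and from the punctures.

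\textbf{Your route.} You pass to the partial-fraction $H$-basis $\{1\}\cup\{Y_s^k\}$ of $A_H$. This makes the upper bound trivial. After completion it also gives the explicit description
\[
\mathbb T_\zeta=\Bigl\{c_0+\sum c_{s,k}Y_s^k : |c_{s,k}|_\rho\to0\Bigr\},
\]
so density of the image is visible directly. The price is an extra comparison of norms between $\Fq$-bases of $\A$ and $H$-bases of $A_H$.

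\textbf{Your ``main obstacle'' needs no canonical-reduction theory.} Argue as the paper does. Pick $\eta\in\overline{\Fq}$ with $\eta\ne\zeta^{(s)}$ for all $s$ and $\bar f(\eta)\ne0$. Then $\eta\in\mathbb D_\zeta$ and each $Y_s(\eta)\in\overline{\Fq}$. Hence $f(\eta)$ has nonzero residue $\bar f(\eta)$, so $|f(\eta)|_\rho=1$.

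\textbf{Two small corrections.}

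In Step 1, the reason the norm is unchanged is not that $H/\Fq$ is unramified. Under $\A\otimes_{\Fq}\Cinf=A_H\otimes_H\Cinf$, an $\Fq$-basis $(a_i)$ of $\A$ becomes the $H$-basis $(a_i\otimes1)$ with the same coefficients. Any two $H$-bases of $A_H$ differ by matrices, and their inverses, with entries in $H$. The nonzero elements of $H$ are roots of unity, hence of absolute value $1$. The ultrametric inequality, applied in both directions, then gives equality of the norms.

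In Step 4, $\tau$ is not isometric; instead $\|\tau F\|_{\mathrm G}=\|F\|_{\mathrm G}^q$. Since $\tau$ is additive, this gives $\|\tau F-\tau G\|_{\mathrm G}=\|F-G\|_{\mathrm G}^q$. So $\tau$ is uniformly continuous, which is all you need to extend the identities to the completion. This is the norm identity the paper uses.
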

\begin{proof}
A change of $\Fq$-basis has coefficients of absolute value at
most one. Applying the ultrametric inequality
$|x+y|_\rho\le\max\{|x|_\rho,|y|_\rho\}$ to the change matrix
and to its inverse proves that the displayed maximum is basis
independent. This is the Gauss norm of
\cite[Proposition~2.2]{GazdaMaurischat2021}.

Every $a\in\A$ can be written as $f(t)/\rho(t)^d$, with
$f\in\Fq[t]$ and $\deg f\le Nd$. At a finite point of
$\mathbb D_\zeta$ with $|t|_\rho\le1$, all the distances
$|t-\zeta^{(s)}|_\rho$ equal one, so $|a(t)|_\rho\le1$.
If $|t|_\rho>1$, the denominator has absolute value $|t|_\rho^{Nd}$
and the numerator has absolute value at most that number. At
$P_\infty$, the same bound follows by taking the value of $a(t)$ in
the coordinate $t^{-1}$. Thus $\|a\|_{\mathbb D_\zeta}\le1$,
and for $F=\sum_i a_i\otimes c_i$ the supremum norm of its image in $\mathbb T_\zeta$ is at most $\|F\|_{\mathrm G}$.

For the reverse inequality, let $F\ne0$ and pick a coefficient
$c_{i_0}$ of maximal absolute value. The scalars $c_i/c_{i_0}$ lie in
the valuation ring of $\Cinf$, and $c_{i_0}/c_{i_0}=1$ has nonzero
residue. Let $\widetilde c_i$ be the residue of $c_i/c_{i_0}$ in
$\overline{\Fq}$. The rational function
$\sum_i\widetilde c_i a_i(t)$ is nonzero. Indeed, linear independence
of the $a_i$ over $\Fq$ survives the constant extension to
$\overline{\Fq}$. To see this, clear a common power of $\rho$ and
extend scalars in the resulting finite-dimensional polynomial space.
Choose $\eta\in\overline{\Fq}$ away from the zeros of
$\sum_i\widetilde c_i a_i(t)$ and from the punctures. Viewed as a
constant of $\Cinf$, this point belongs to $\mathbb D_\zeta$.
Evaluation of $\jmath(F)(\eta)/c_{i_0}$ has nonzero residue
$\sum_i\widetilde c_i a_i(\eta)$, hence absolute value one. This
proves equality of the two norms.

The image of the finite tensor product $\A\otimes_{\Fq}\Cinf$ is
$\Cinf\otimes_H A_H$. It contains every
$Y_s=(t-\zeta^{(s)})^{-1}$ and all scalars in $\Cinf$. These functions
topologically generate $\mathbb T_\zeta$, and hence the image is dense. The isometry
therefore extends to the isomorphism~\eqref{eq:Tate-algebra-comparison}.

On finite tensors, multiplication by $a$ on
$\A\otimes_{\Fq}\Cinf$ becomes multiplication by $a(t)$ on
$\mathbb T_\zeta$, and $\operatorname{id}_{\A}\otimes\tau$
becomes coefficient Frobenius because $a(t)$ has coefficients in
$\Fq$. The norm identity $\|\tau F\|_{\mathrm G}=\|F\|_{\mathrm G}^{q}$
proves continuity of $\tau$. Consequently,
$\operatorname{id}_{\A}\otimes\Psi_a$ becomes the coefficientwise
polynomial $\Psi_a$ on $\mathbb T_\zeta$. These two identities
extend to the completion $\A\widehat\otimes_{\Fq}\Cinf$.
Every element of the completion has a unique expansion
$\sum_{i\ge0}a_i\otimes c_i$ with $|c_i|_\rho\to0$.
It is fixed by coefficient Frobenius precisely when $c_i^q=c_i$
for every $i$. Thus all $c_i$ lie in $\Fq$, and only finitely many
are nonzero. This proves $\mathbb T_\zeta^{\tau=1}=\A$.
Evaluation at $\eta$ is bounded by the supremum norm and sends
$a\otimes c$ to $a(\eta)c$ on finite tensors.
For the tensor-valued version, put $F_\eta=\Fq(\eta)$,
$\chi_\eta(a)=a(\eta)$, and write
$m\colon F_\eta\otimes_{\Fq}\Cinf\to\Cinf$ for the
multiplication map $c'\otimes z\mapsto c'z$. The map
\[
 \chi_\eta\otimes\operatorname{id}:
 \A\otimes_{\Fq}\Cinf\longrightarrow F_\eta\otimes_{\Fq}\Cinf,
 \qquad a\otimes c\longmapsto a(\eta)\otimes c,
\]
is bounded for the Gauss norms. Indeed, in an $\Fq$-basis of
$F_\eta$, every $\chi_\eta(a_i)$ has coefficients in $\Fq$, so the
output norm is at most $\|F\|_{\mathrm G}$. The target
$F_\eta\otimes_{\Fq}\Cinf$ is a finite direct sum of copies of
$\Cinf$ and is complete. Thus $\chi_\eta\otimes\operatorname{id}$
extends continuously, and its composite
$m\circ(\chi_\eta\otimes\operatorname{id})$ with the
multiplication map is evaluation at $\eta$.
\end{proof}

For a finite constant $\eta$ outside the punctures, evaluation is the
composite of the continuous map $\chi_\eta\otimes\operatorname{id}$
with scalar multiplication. In Section~\ref{sec:gauss}, the tensor
factors are retained until this last multiplication.

The following identities are stated in
\cite[Proposition~6.18, Theorem~6.19 and Theorem~6.26]{HuHuangYau2026},
with the same choice of $\kappa$ as in \eqref{eq:Phi-period-explicit}.
Put $\omega_\Phi=H_\Phi(\wt\pi_\Phi;\cdot)$.
We prove these identities in $\mathbb T_\zeta$, using the
fixed-ring assertion of Lemma~\ref{lem:Tate-algebra-comparison}.

\begin{proposition}[Frobenius and period identities]\label{prop:rank-one-inputs}
For every $z\in\Cinf$,
\begin{align*}
 \tau H_\Phi(z;t)&=f_\zeta(t)H_\Phi(z;t)+\exp_\Phi(z),
\end{align*}
and
\begin{equation}\label{eq:omega-H-equality}
 \omega_\Phi(t)
 =\kappa\prod_{m\ge0}
 \frac{\Theta^{q^m}}{\Theta^{q^m}-(t-\zeta^{(m)})^{-1}}.
\end{equation}
In particular,
\begin{equation}\label{eq:omega-special-equation}
 \tau\omega_\Phi=f_\zeta\omega_\Phi.
\end{equation}
\end{proposition}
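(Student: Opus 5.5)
Three things must be proved: the Frobenius functional equation, the product formula for $\omega_\Phi$, and the special equation \eqref{eq:omega-special-equation}. All computations take place in $\mathbb T_\zeta$, where \eqref{eq:H-Pellarin-series} converges by Proposition~\ref{prop:H-convergence}.

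\emph{Step 1: the functional equation.} Here $\tau$ acts on coefficients only. It raises $z^{q^m}/D_m^\Phi$ to $z^{q^{m+1}}/(D_m^\Phi)^q$, raises $\theta^{q^m}$ to $\theta^{q^{m+1}}$, and raises $\zeta$ to $\zeta^q$ inside $\beta$, $x$ and $\gamma_\zeta$, while $t$ is fixed. So $\tau$ of the $m$-th term is a rational function in $t$ with conjugated constants. Write $k_m(t)=(a_m-\zeta^{(1)})(t-\zeta)/(t-a_m)$ with $a_m=\theta^{q^m}$, as in the proof of Proposition~\ref{prop:H-convergence}. A partial-fraction computation should give an identity of the shape
\[
 (\tau k_m)(t)\cdot\frac{\gamma_\zeta^{q}}{(D_m^\Phi)^q}
 =f_\zeta(t)\,\frac{\gamma_\zeta k_{m+1}(t)}{D_{m+1}^\Phi}+\text{(constant in $t$)},
\]
using the recursion for $D_m^\Phi$. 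That recursion comes from $\Phi_x\exp_\Phi=\exp_\Phi(\Theta z)$, or equivalently from inverting $\log_\Phi$ via the product $\prod f_\zeta^{(j)}(\theta)$. The constants in $t$ should sum to $\exp_\Phi(z)$ by the series in Lemma~\ref{lem:exponential-action-series}. I would first identify both sides as rational functions of $t$ with simple poles at the $a_{m+1}$, then compare residues at $t=a_{m+1}$ and values at $P_\infty$. A cleaner route is to run the computation at the level of $\EE_{\Phi,z}$. The identity $\tau\exp_\Phi(w)=\Phi$-type relations reduce the claim to the rational identity
\[
 \sigma\!\left(\frac{\beta(u)}{x(u)-x(t)}\right)\ \text{vs.}\ f_\zeta(t)\frac{\beta(u)}{x(u)-x(t)},
\]
whose difference should be $u$-independent modulo terms killed by the evaluation. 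I expect this step to be the main obstacle. The normalization constants $C$, $c$ and $\sigma^2$ in \eqref{eq:Phi-explicit-normalization} have to be tracked exactly, so I would check the case $N=1$ (Carlitz, $\Phi_x=\Theta+\tau$) first to calibrate $\gamma_\zeta$.

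\emph{Step 2: the special equation.} Set $z=\wt\pi_\Phi$. By Proposition~\ref{prop:Psi-Phi-comparison}, $\exp_\Phi(\wt\pi_\Phi)=0$, so Step 1 yields $\tau\omega_\Phi=f_\zeta\omega_\Phi$. This is \eqref{eq:omega-special-equation}.

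\emph{Step 3: the product formula.} Let $\Pi(t)$ denote the right-hand side of \eqref{eq:omega-H-equality}. Each factor is $\Theta^{q^m}/(\Theta^{q^m}-Y_m)$ with $|Y_m|\le1<|\Theta^{q^m}|$ on $\mathbb D_\zeta$, so the product converges to a unit of $\mathbb T_\zeta$. Applying $\tau$ shifts $m\mapsto m+1$ and replaces $\kappa$ by $\kappa^q=-\Theta\kappa$. This gives $\tau\Pi=-\Theta\cdot\frac{\Theta-Y_0}{\Theta}\Pi=f_\zeta\Pi$, since $f_\zeta=Y_0-\Theta$. Hence $\omega_\Phi/\Pi$ is fixed by $\tau$, and by Lemma~\ref{lem:Tate-algebra-comparison} it lies in $\A$. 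Both functions are regular on $\mathbb D_\zeta$, which contains $P_\infty$, so the ratio is a function in $\A$ that is regular on the affinoid. A growth or degree argument then forces the ratio to be a constant in $\Fq$. Concretely, $\Pi$ is a unit, and $\omega_\Phi$ extends meromorphically with simple poles only at the $a_m$ (Proposition~\ref{prop:H-convergence}), whereas a nonconstant element of $\A$ has poles at the punctures of bounded order. Finally I fix the constant by comparing values at $P_\infty$. There $\Pi(P_\infty)=\kappa$, and $\omega_\Phi(P_\infty)=\gamma_\zeta\sum_m \wt\pi_\Phi^{q^m}(a_m-\zeta^{(1)})/D_m^\Phi$, which should reduce to $\kappa$ using \eqref{eq:Phi-period-explicit} and the same choice of $\kappa$.
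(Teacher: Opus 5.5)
Your treatment of $\Pi$ is correct and matches the paper: the product converges to a unit, $\tau\Pi=f_\zeta\Pi$, and the ratio $a=\omega_\Phi/\Pi$ is $\tau$-fixed and so lies in $\A$ by Lemma~\ref{lem:Tate-algebra-comparison}. Step~2 is also fine.

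The gap is your claim that $a$ is constant. Every element of $\A$ is regular on $\mathbf P^1_{\Cinf}\setminus\{\zeta^{(s)}\}$. On that domain $\Pi=\kappa\prod_{m\ge0}(t-\zeta^{(m)})/(t-\theta^{q^m})$ has no zeros and has simple poles exactly at the $\theta^{q^m}$. So the pole data you cite are consistent with \emph{every} $a\in\A$ and exclude nothing. A growth argument at the punctures would need independent estimates for $\omega_\Phi$ exactly where its poles and the zeros of $\Pi$ accumulate, and you have none.

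Comparing values at $P_\infty$ cannot repair this. It determines only $a(P_\infty)$, and $a$ and $a+1/\rho$ take the same value there. Moreover, the identity $\omega_\Phi(P_\infty)=\kappa$ you hope for is itself an unproved series identity.

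The missing idea is to evaluate at the transcendental point $\theta$. Continue $\Pi$ meromorphically to the same punctured line and extend $\omega_\Phi=a\Pi$ by the identity principle. Then compare residues at $t=\theta$, where only the $m=0$ term of \eqref{eq:H-Pellarin-series} and the $m=0$ factor of $\Pi$ have a pole. Both residues equal $\wt\pi_\Phi(\theta-\zeta)^2$, the second by \eqref{eq:Phi-period-explicit}. Hence $a(\theta)=1$, and injectivity of $\iota$ gives $a=1$ with no constancy argument at all.

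Step~1 is only a plan, and its proposed input is wrong for $N\ge2$. The shape you write is the paper's identity $h_{m-1}^{(1)}=f_\zeta h_m+(D_m^\Phi)^{-1}$, with $h_m=\gamma_\zeta(D_m^\Phi)^{-1}k_m$. You also need the base identity $f_\zeta h_0=-1$. The constants in $t$ sum to $\exp_\Phi(z)-z$, not $\exp_\Phi(z)$, and the missing $z$ comes from $-f_\zeta h_0z$.

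More seriously, the recursion for $D_m^\Phi$ cannot come from $\Phi_x\exp_\Phi=\exp_\Phi(\Theta z)$. When $N\ge2$ one has $x\notin\A$, so there is no $\Phi_x$. Inverting $\log_\Phi$ gives only the triangular coefficient recursion, not a product formula.

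The input the paper uses is the product formula $\exp_\Psi(z)=\sum_m z^{q^m}/d_m$ with $d_m=s_m(\theta^{q^m})$. Since $s_m=f_\zeta s_{m-1}^{(1)}$, this gives $d_m=d_{m-1}^q(\theta-\theta^{q^m})/\bigl((\theta^{q^m}-\zeta)(\theta-\zeta)\bigr)$. Two facts from \eqref{eq:Phi-explicit-normalization} convert this into the recursion for $D_m^\Phi$: the conjugation $(D_m^\Phi)^{-1}=C^{q^m-1}/\sigma^2(d_m)$ and $C^{q-1}=\gamma_\zeta^q(\theta-\zeta^q)/(\theta-\zeta^{q^2})$. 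After that, the termwise identities are a direct check.
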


\begin{proof}
We verify the identities in the affinoid algebra $\mathbb T_\zeta$.
Write $d_m=s_m(\theta^{q^m})$, with $d_0=1$.
The exponential formula for $\Psi$ is
$\exp_\Psi(z)=\sum_{m\ge0}z^{q^m}/d_m$
\cite[Theorem~3.14 and Corollary~3.15]{HuHuangYau2026}.
This formula can also be checked directly from the specified model.
Substituting the identities \(d_m=s_j(\theta^{q^m})d_{m-j}^{q^j}\) for \(0\le j\le m\),
together with \(s_j(\theta^{q^m})=0\) for \(j>m\), into the defining expansion of
\(a(t)\) gives, for every \(0\ne a\in\A\),
\begin{align*}
 \sum_{j=0}^{\min\{m,n(a)\}}
 \frac{b_j(a)}{d_{m-j}^{q^j}}
 =\frac{a(\theta^{q^m})}{d_m}
 =\frac{a(\theta)^{q^m}}{d_m}.
\end{align*}
This is the coefficient identity for
$\Psi_a\exp_\Psi(z)=\exp_\Psi(a(\theta)z)$.
Since $s_m=f_\zeta s_{m-1}^{(1)}$, we have
\begin{align*}
 d_m=d_{m-1}^{q}
 \frac{\theta-\theta^{q^m}}{(\theta^{q^m}-\zeta)(\theta-\zeta)}.
\end{align*}
Conjugation in
\eqref{eq:Phi-explicit-normalization} gives
$(D_m^\Phi)^{-1}=C^{q^m-1}/\sigma^2(d_m)$, where $\sigma$ is the
automorphism fixing $\theta$. Moreover,
\begin{align*}
 C^{q-1}
 &=\gamma_\zeta^q\frac{\theta-\zeta^q}{\theta-\zeta^{q^2}},&
 (D_m^\Phi)^{-1}
 &=\gamma_\zeta^q (D_{m-1}^\Phi)^{-q}
 \frac{(\theta^{q^m}-\zeta^{q^2})(\theta-\zeta^q)}{\theta-\theta^{q^m}}
 \quad(m\ge1).
\end{align*}
Set
\begin{align*}
 h_m(t)=\gamma_\zeta (D_m^\Phi)^{-1}
 \frac{(\theta^{q^m}-\zeta^q)(t-\zeta)}{t-\theta^{q^m}},
 \qquad H_\Phi(z;t)=\sum_{m\ge0}h_m(t)z^{q^m}.
\end{align*}
Direct calculation using this recurrence gives
\begin{align*}
 f_\zeta h_0&=-1,&
 f_\zeta h_m+(D_m^\Phi)^{-1}
 &=(D_m^\Phi)^{-1}\frac{\theta-\theta^{q^m}}{\theta-\zeta^q}
 \frac{t-\zeta^q}{t-\theta^{q^m}}
 =h_{m-1}^{(1)}\quad(m\ge1).
\end{align*}

For fixed $z\in\Cinf$, coefficient Frobenius also acts on the scalar $z$, so
$\tau H_\Phi(z;t)=\sum_{m\ge0}h_m^{(1)}(t)z^{q^{m+1}}$.
Summing proves $\tau H_\Phi=f_\zeta H_\Phi+\exp_\Phi(z)$.
Proposition~\ref{prop:H-convergence} justifies convergence, and
Lemma~\ref{lem:Tate-algebra-comparison} proves continuity of $\tau$.

Let $\Pi$ denote the product on the right of
\eqref{eq:omega-H-equality}. Since
$\|Y_m/\Theta^{q^m}\|\le q^{-q^m}$, the product of the units
$(1-Y_m/\Theta^{q^m})^{-1}$ converges to a unit of
$\mathbb T_\zeta$. Shifting indices and using
$\kappa^{q-1}=-\Theta$ gives
\begin{align*}
 \tau\Pi=-\Theta(1-x/\Theta)\Pi=f_\zeta\Pi.
\end{align*}
Both $H_\Phi(\wt\pi_\Phi;t)$ and $\Pi(t)$ satisfy the
homogeneous Frobenius equation, since $\exp_\Phi(\wt\pi_\Phi)=0$.
Their quotient is fixed by $\tau$. The fixed-ring assertion of
Lemma~\ref{lem:Tate-algebra-comparison} therefore gives
$H_\Phi(\wt\pi_\Phi;t)/\Pi(t)=a(t)\in\A$.
Each factor of the product is
$1+(\theta-\zeta)^{q^m}/(t-\theta^{q^m})$. Hence the same separation from
the punctures $\zeta^{(s)}$ used in Proposition~\ref{prop:H-convergence}
gives meromorphic continuation of $\Pi$ to the same domain.
The identity $H_\Phi(\wt\pi_\Phi;t)=a(t)\Pi(t)$ extends there
by the identity principle on this connected curve.
Only the $m=0$ summand, respectively factor, contributes a pole at
$t=\theta$. Using \eqref{eq:Phi-period-explicit}, their residues are
\begin{align*}
 \operatorname{Res}_{t=\theta}H_\Phi(\wt\pi_\Phi;t)\,dt
 &=\wt\pi_\Phi\gamma_\zeta
 (\theta-\zeta^q)(\theta-\zeta)
 =\wt\pi_\Phi(\theta-\zeta)^2,\\
 \operatorname{Res}_{t=\theta}\Pi(t)\,dt
 &=\kappa(\theta-\zeta)
 \prod_{m\ge1}\frac{\Theta^{q^m}}
 {\Theta^{q^m}-\sigma^m(\Theta)}
 =\wt\pi_\Phi(\theta-\zeta)^2.
\end{align*}
Thus $a(\theta)=1$. The generic characteristic embedding is
injective, so $a=1$, proving \eqref{eq:omega-H-equality} and hence
\eqref{eq:omega-special-equation}.
\end{proof}

\subsection{Remainder of the generating function}
\label{subsec:H-remainder}

Keep the element $p$ and the integer $n=Nd$ of Section~\ref{sec:pairing}.
Over $\Cinf$, write the complete zero divisor as
\[
 \operatorname{div}_0(p)
 =
 \sum_{\nu=1}^{h}m_\nu[\lambda_\nu]
 +m_\infty[P_\infty],
 \qquad
 m_\nu\in\mathbb Z_{>0},\quad m_\infty\in\mathbb Z_{\ge0}.
 \]
Here $\lambda_\nu$ are distinct finite constants outside the punctures
$P_s$. Equality of the degrees of the zero and
pole divisors gives $\sum_{\nu=1}^{h}m_\nu+m_\infty=n$.
Write $y_\nu=t-\lambda_\nu$ and $y_\infty=w=t^{-1}$.
Choose pairwise disjoint closed discs $\{|y_i|_\rho\le r_i\}$
with $r_i\in|\Cinf^\times|_\rho$ and $0<r_i<1$, sufficiently small that
$p=y_i^{m_i}u_i$ with $u_i$ a unit. These discs lie in $\mathbb D_\zeta$
and avoid every Frobenius pole $\theta^{q^m}$. The affinoid algebra on their disjoint union is
\begin{align*}
 \mathscr A_p
 =\prod_i\Cinf\!\left\langle\frac{y_i}{r_i}\right\rangle,
\end{align*}
where $i$ runs over $\{1,\ldots,h\}$, with $\infty$ adjoined when $m_\infty>0$.

\begin{proposition}\label{prop:analytic-quotient}
For every $z$, one has $H_\Phi(z;t)\in\mathscr A_p$ and
\begin{equation*}
 \mathscr A_p/p\mathscr A_p
 \simeq\prod_i\Cinf[y_i]/(y_i^{m_i}).
\end{equation*}
The factors include the jet algebra at $P_\infty$ when $m_\infty>0$,
and the quotient is independent of the admissible radii.
\end{proposition}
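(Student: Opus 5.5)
The proof has two independent parts: membership of $H_\Phi$ in $\mathscr A_p$, and the structure of the quotient.

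\emph{Membership.} By Proposition~\ref{prop:H-convergence}, the series \eqref{eq:H-Pellarin-series} converges in $\mathbb T_\zeta$. The admissible discs are affinoid subdomains of $\mathbb D_\zeta$, so restriction $\mathbb T_\zeta\to\Cinf\langle y_i/r_i\rangle$ is a bounded homomorphism of affinoid algebras. Each summand $k_m(t)z^{q^m}/D_m^\Phi$ is a rational function with its only pole at $\theta^{q^m}$, which the discs avoid. The restricted series therefore converges in each factor, and $H_\Phi(z;t)\in\mathscr A_p$. Near $P_\infty$ one works in the coordinate $w=t^{-1}$; the bound $\|k_m\|\le1$ on $\mathbb D_\zeta$ already covers the disc $|w|\le r_\infty$, since $\mathbb D_\zeta$ contains $P_\infty$.

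\emph{Quotient.} Since $\mathscr A_p$ is a finite product, $p\mathscr A_p$ is the product of the ideals $p\,\Cinf\langle y_i/r_i\rangle$. It suffices to treat one disc. On the $i$-th disc, the choice of radius gives $p=y_i^{m_i}u_i$ with $u_i$ a unit of $\Cinf\langle y_i/r_i\rangle$, so $p\Cinf\langle y_i/r_i\rangle=y_i^{m_i}\Cinf\langle y_i/r_i\rangle$. Next, $\Cinf\langle y_i/r_i\rangle/(y_i^{m_i})\simeq\Cinf[y_i]/(y_i^{m_i})$. The map from polynomials is surjective: write $F=\sum_k c_ky_i^k$ with $|c_k|r_i^k\to0$, and note that the tail $\sum_{k\ge m_i}c_ky_i^k=y_i^{m_i}\sum_k c_{k+m_i}y_i^{k}$ lies in the ideal, because the shifted series still converges on the disc. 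The map is injective because $y_i^{m_i}G$ with $G$ a convergent series has no monomials of degree below $m_i$. When $m_\infty>0$, the same argument in the variable $w$ gives the jet algebra at $P_\infty$.

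\emph{Independence of radii and the unit claim.} For $r_i'<r_i$, the restriction map $\Cinf\langle y_i/r_i\rangle\to\Cinf\langle y_i/r_i'\rangle$ induces the identity on $\Cinf[y_i]/(y_i^{m_i})$, since both quotients are read off from the truncated Taylor expansion. The only point needing care is the existence of radii with $u_i$ a unit, which the setup asserts by choice; I would justify it directly. Over $\Cinf$, $p$ factors as a rational function with zeros $\lambda_\nu$ and $P_\infty$ (with multiplicity) and poles only at the punctures. Hence $u_i=p/y_i^{m_i}$ is a rational function with no zero or pole in a small enough disc. A rational function without zeros or poles on a closed disc has constant absolute value there, so it is a unit of the Tate algebra. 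I expect this to be the main, though mild, obstacle, together with the bookkeeping at $P_\infty$ in the coordinate $w$.
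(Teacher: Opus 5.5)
Your proposal is correct and follows the paper's proof: analyticity on each disc comes from Proposition~\ref{prop:H-convergence}, the fact that $u_i$ is a unit gives $p\mathscr A_p=\prod_i y_i^{m_i}\Cinf\langle y_i/r_i\rangle$, and restriction to smaller discs induces the identity on the jet algebras. Beyond the paper, you spell out the truncation of Tate series modulo $y_i^{m_i}$ and justify why $u_i$ is a unit: its zeros and poles lie outside the closed disc, so it has constant absolute value there. The paper simply takes both points from the choice of admissible radii.
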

\begin{proof}
Proposition~\ref{prop:H-convergence} gives analyticity on every chosen disc.
Dividing by $p=y_i^{m_i}u_i$ gives the displayed jet algebra, since $u_i$
is a unit. Restriction to smaller discs induces the identity on each
jet algebra.
\end{proof}

Recall the representative space $V_p=\Span_H\{e_0,\ldots,e_{n-1}\}$.
\begin{proposition}\label{prop:Vp-analytic-representatives}
The natural map $\Cinf\otimes_H V_p\longrightarrow\mathscr A_p/p\mathscr A_p$ is an isomorphism.
\end{proposition}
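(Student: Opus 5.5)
The plan is to compare dimensions and then prove injectivity, or equivalently surjectivity, by an explicit argument. By Proposition~\ref{prop:analytic-quotient} the target $\mathscr A_p/p\mathscr A_p\simeq\prod_i\Cinf[y_i]/(y_i^{m_i})$ has $\Cinf$-dimension $\sum_\nu m_\nu+m_\infty=n$. The source $\Cinf\otimes_H V_p$ also has dimension $n$, since $e_0,\ldots,e_{n-1}$ is an $H$-basis of $V_p$. It therefore suffices to show that the map is well defined and injective.

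First I check that the map is well defined. Each $e_i$ lies in $L(D_n^{(-1)})\subset A_H$ and has poles only at the punctures $\zeta^{(s)}$. Since the discs lie in $\mathbb D_\zeta$, which avoids the punctures, and also contain the neighbourhood of $P_\infty$ when $m_\infty>0$, every $e_i$ restricts to an element of each $\Cinf\langle y_i/r_i\rangle$. Hence $\Cinf\otimes_H V_p\to\mathscr A_p$ is defined, and we compose with reduction modulo $p$.

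For injectivity, let $F=\sum_i c_ie_i$ with $c_i\in\Cinf$, and suppose its image vanishes. Then on each disc $F=p\,g_i$ with $g_i$ analytic. Equivalently, $F$ vanishes to order at least $m_\nu$ at each $\lambda_\nu$ and to order at least $m_\infty$ at $P_\infty$. Because $F$ is a rational function on $\mathbf P^1_{\Cinf}$, the quotient $h=F/p$ is a rational function whose only possible poles lie at the punctures $P_s$. This reduces to the algebraic argument of Proposition~\ref{prop:quotient-basis-imrn}, now over $\Cinf$. The pole bounds defining $V_p$ give $\operatorname{ord}_{P_0}(h)\ge1$ and $\operatorname{ord}_{P_s}(h)\ge0$ for $s\ne0$. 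So $h$ is a regular function on $\mathbf P^1_{\Cinf}$ that vanishes at $P_0$, hence $h=0$ and $F=0$.

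The one point needing care is the step ``local divisibility on discs implies global divisibility.'' The zeros of $p$ on $\mathbf P^1_{\Cinf}$ away from the punctures are exactly the $\lambda_\nu$ and $P_\infty$ with the listed multiplicities, and the discs cover all of them. So $F/p$ has no poles outside the punctures. Here I use that divisibility of $F$ by $y_i^{m_i}u_i$ in the Tate algebra, with $u_i$ a unit, is equivalent to vanishing of the first $m_i$ Taylor coefficients at the centre. This also handles the point at infinity, using the coordinate $w=t^{-1}$. Dimension count then gives surjectivity, which completes the isomorphism.
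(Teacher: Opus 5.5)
Your proposal is correct and follows the paper's proof. Both sides have $\Cinf$-dimension $n=\sum_\nu m_\nu+m_\infty$, and injectivity comes from showing that $F/p$ is regular at every zero of $p$ and, by the pole bounds defining $V_p$, at every puncture, so it is a constant vanishing at $P_0$. Your extra checks (that each $e_i$ is analytic on the chosen discs, and that divisibility by $p=y_i^{m_i}u_i$ in the Tate algebra is vanishing to order $m_i$ at the centre) only make explicit what the paper leaves implicit.
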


\begin{proof}
Let $F\in\Cinf\otimes_H V_p$ map to zero in
$\mathscr A_p/p\mathscr A_p$. Then $F/p$ is regular at every zero of
$p$, including $P_\infty$ when $m_\infty>0$. At the punctures the pole
orders of $F$ are bounded by those of $V_p$. At $P_0$ the pole order of
$F$ is at most $d-1$ and that of $p$ is exactly $d$. At every other
$P_s$ the pole order of $F$ is at most $d$ and that of $p$ is exactly
$d$. Hence $F/p$ is regular at $P_0,\ldots,P_{N-1}$ as well, so it is
regular everywhere on $\mathbf P^1_{\Cinf}$ and is therefore a
constant. It vanishes at $P_0$, so $F=0$ and the map is injective.

The source has $\Cinf$-dimension $n$, since
$e_0,\ldots,e_{n-1}$ is an $H$-basis of $V_p$. The target has
$\Cinf$-dimension $n$ by the degree equality
$\sum_{\nu=1}^{h}m_\nu+m_\infty=n$. An injective map between spaces of
the same finite dimension is an isomorphism.
\end{proof}

The dual functions $\DD_i$ of \eqref{eq:D-i-imrn}
determine the operator coefficients used below. For $0\le i<n$ put
\begin{equation*}
 \mathscr D_{p,i}=\beta\,\DD_i/\alpha_n.
\end{equation*}

For $i=n-1$, \eqref{eq:D-i-closed-imrn} gives
$\DD_{n-1}=\alpha_n(1+c_{n-1}x)^{-1}$. Since $n=Nd$, periodicity gives
$c_{n-1}=c_{N-1}$, while $\beta=1+c_{N-1}x$. Dividing by
$\alpha_n$ gives $\mathscr D_{p,n-1}=1$, recorded as
\begin{equation}\label{eq:D-last-normalized}
 \mathscr D_{p,n-1}=1.
\end{equation}

\begin{definition}[The $p$-remainder]\label{def:p-remainder-imrn}
For $F\in\mathscr A_p$, its $p$-remainder is the unique element of $\Cinf\otimes_H V_p$ with the same image as $F$ in $\mathscr A_p/p\mathscr A_p$, written
\begin{align*}
 [F]_p
 =\sum_{i=0}^{n-1}C_i(F)e_i(t)
 \in\Cinf\otimes_H V_p.
\end{align*}
\end{definition}

At a finite zero, equality of remainders is equivalent to equality of
the first $m_\nu$ Hasse--Schmidt coefficients in $t-\lambda_\nu$. At
infinity, it is equivalent to equality of the first $m_\infty$
coefficients in $w=t^{-1}$.

For $z\in\Cinf$, define the remainder
\begin{equation}\label{eq:R-p-def}
 R_p(z;t)=\gamma_\zeta\EE_{\Phi,z}\left(\frac{\beta(u)}{p(u)}K_p(u,t)\right).
\end{equation}

For every $H$-algebra $S$, scalar extension of Proposition~\ref{prop:quotient-basis-imrn} gives an isomorphism $S\otimes_H V_p\xrightarrow{\sim}S\otimes_H B_p$. Its inverse is denoted by $\operatorname{rem}^{\mathrm{alg}}_{p,t}$.

The universal Cauchy identity uses a second copy of the coefficient ring.
Let $S_u$ be the copy of $A_H$ with $t$ replaced by an
independent variable $u$, localized at $p(u)$. In particular, $x_u$,
$\beta(u)$, and $p(u)$ are elements of $S_u$.

\begin{lemma}[Remainder of the Cauchy kernel]\label{lem:Cauchy-remainder-imrn}
In $S_u\otimes_H B_p$, the element $x_u-x_t$ is a unit and
\begin{align*}
 \operatorname{rem}^{\mathrm{alg}}_{p,t}
 \left(\frac1{x_u-x_t}\right)
 =\frac{K_p(u,t)}{p(u)}
 =\frac{\OO^{[2]}_{p,\zeta}(u,t)}{p(u)}.
\end{align*}
Multiplying by $\beta(u)$ gives
\begin{equation}\label{eq:beta-Cauchy-coefficients}
 \operatorname{rem}^{\mathrm{alg}}_{p,t}
 \left(\frac{\beta(u)}{x_u-x_t}\right)
 =\frac{\beta(u)}{p(u)}K_p(u,t)
 =\sum_{i=0}^{n-1}
 \frac{\alpha_n\mathscr D_{p,i}(u)}{p(u)}e_i(t).
\end{equation}
If $u$ is specialized to $u_0\in\Cinf$ outside the punctures, with
$p(u_0)\ne0$, these formulas agree with the analytic $p$-remainders
whenever $x(u_0)-x(t)$ is a unit of $\mathscr A_p$.
\end{lemma}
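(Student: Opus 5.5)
The plan is to work directly in $S_u\otimes_H B_p$ and use the algebraic identity
\[
\frac{1}{x_u-x_t}-\frac{K_p(u,t)}{p(u)}=\frac{p(t)}{p(u)(x_u-x_t)},
\]
which follows immediately from the definition \eqref{eq:K-p-def} of $K_p$.

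The first step is to show that $x_u-x_t$ is a unit in $S_u\otimes_H B_p$. From \eqref{eq:K-p-def}, $(x_u-x_t)K_p(u,t)=p(u)-p(t)$ holds in $S_u\otimes_H A_H$, and $K_p(u,t)$ is regular there by Lemma~\ref{lem:inverse-linear-dd-imrn} (the case $r=2$). Reducing modulo $p(t)$ gives
\[
(x_u-x_t)\cdot\frac{K_p(u,t)}{p(u)}=1
\]
in $S_u\otimes_H B_p$, because $p(u)$ is invertible in $S_u$. So $x_u-x_t$ is a unit, and its inverse is the class of $K_p(u,t)/p(u)$.

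The second step identifies this class with its canonical representative. By Proposition~\ref{thm:dual-Casimir-imrn}, $K_p(u,t)=\sum_i\DD_i(u)e_i(t)$, so
\[
\frac{K_p(u,t)}{p(u)}\in S_u\otimes_H V_p .
\]
The uniqueness built into $\operatorname{rem}^{\mathrm{alg}}_{p,t}$ (scalar extension of Proposition~\ref{prop:quotient-basis-imrn}) then gives the first equality. The equality $K_p=\OO^{[2]}_{p,\zeta}$ is the recursion start in Theorem~\ref{thm:universal-O-imrn}. Multiplying by $\beta(u)\in S_u$ commutes with $\operatorname{rem}^{\mathrm{alg}}_{p,t}$, since that map is $S_u$-linear. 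Substituting $\DD_i=\alpha_n\mathscr D_{p,i}/\beta$ yields \eqref{eq:beta-Cauchy-coefficients}.

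The third step, the analytic specialization, is where the care lies. Specializing $u=u_0$ is a ring map from $S_u$ to $\Cinf$, defined because $p(u_0)\ne0$ and $u_0$ avoids the punctures. Composing with Proposition~\ref{prop:analytic-quotient} gives a map
\[
\Cinf\otimes_H B_p\to\mathscr A_p/p\mathscr A_p .
\]
I would check that this map is compatible with the isomorphism of Proposition~\ref{prop:Vp-analytic-representatives}: both send $e_i$ to its image, and the zeros of $p$ over $\Cinf$ account for all of $B_p\otimes\Cinf$ by the degree count.

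Given that compatibility, the argument closes as follows. The element $K_p(u_0,t)/p(u_0)$ lies in $\Cinf\otimes_H V_p$. The identity $(x(u_0)-x(t))K_p(u_0,t)/p(u_0)\equiv1$ holds modulo $p\mathscr A_p$. When $x(u_0)-x(t)$ is a unit of $\mathscr A_p$, this identifies $K_p(u_0,t)/p(u_0)$ as the analytic remainder of $1/(x(u_0)-x(t))$, and multiplying by the scalar $\beta(u_0)$ gives the second formula.

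The main obstacle I expect is justifying that the algebraic quotient maps correctly into the analytic one, including the jet at $P_\infty$. I would handle it by noting that $e_i$ and $p$ are rational functions regular on the discs, so the reduction map is simply restriction of rational functions.
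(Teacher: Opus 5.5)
Your proposal is correct and follows the paper's proof: the unit property and the inverse $K_p(u,t)/p(u)$ come from $(x_u-x_t)K_p(u,t)\equiv p(u)$ modulo $p(t)$, and the expansion \eqref{eq:K-dual-exp-imrn} places this inverse in $S_u\otimes_H V_p$. The specialization is settled by uniqueness of analytic remainders, and your congruence multiplied by the unit $(x(u_0)-x(t))^{-1}$ is, up to the scalar $\beta(u_0)$, the specialization of the paper's exact identity \eqref{eq:exact-kernel-division}; the compatibility of $\Cinf\otimes_H B_p$ with $\mathscr A_p/p\mathscr A_p$ that you call the main obstacle is harmless but unnecessary, since your closing argument uses only Proposition~\ref{prop:Vp-analytic-representatives}.
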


\begin{proof}
By the definition of $K_p$, in $S_u\otimes_H B_p$ one has
\[
 (x_u-x_t)K_p(u,t)=p(u).
\]
The right side is invertible in $S_u$, so $K_p(u,t)/p(u)$ is the
inverse of $x_u-x_t$. Formula~\eqref{eq:K-dual-exp-imrn} puts this
inverse in $S_u\otimes_H V_p$, which is the selected representative
space. This proves the first assertion. Multiplication by $\beta(u)$
and the expansion~\eqref{eq:K-dual-exp-imrn} prove
\eqref{eq:beta-Cauchy-coefficients}.

Before reduction, the corresponding identity of rational functions is
\begin{equation}\label{eq:exact-kernel-division}
 \frac{\beta(u)}{x_u-x_t}
 -\frac{\beta(u)}{p(u)}K_p(u,t)
 =p(t)\frac{\beta(u)}{p(u)(x_u-x_t)}.
\end{equation}
Under the stated numerical specialization, the right side of
\eqref{eq:exact-kernel-division} is $p(t)$ times an element of
$\mathscr A_p$, and its middle term belongs to $\Cinf\otimes_H V_p$.
Here $x(u_0)-x(t)$ is a unit of $\mathscr A_p$ exactly so that the
Cauchy kernel has no pole on any chosen disc. Uniqueness in
Definition~\ref{def:p-remainder-imrn} therefore identifies the
algebraic and analytic remainders.
\end{proof}

\begin{theorem}[Exact division and remainder]
\label{thm:H-remainder-imrn}
For every $z\in\Cinf$, the remainder $R_p(z;t)$ of
\eqref{eq:R-p-def} lies in $\Cinf\otimes_H V_p$, and in $\mathscr A_p$
one has the exact identity
\begin{equation}\label{eq:exact-analytic-division}
 H_\Phi(z;t)-R_p(z;t)
 =p(t)H_\Phi\left(\frac{z}{p(\theta)};t\right),\qquad
 [H_\Phi(z;t)]_p=R_p(z;t).
\end{equation}
The coefficient formula is
\begin{equation}\label{eq:H-remainder-coeff-imrn}
 [H_\Phi(z;t)]_p
 =\gamma_\zeta\sum_{i=0}^{n-1}
 \alpha_n(\mathscr D_{p,i}\diamond_{\Phi,H}\exp_\Phi(z/p(\theta)))e_i(t).
\end{equation}
The last coefficient in the basis $(e_i)$ is
\begin{equation*}
 C_{n-1}(H_\Phi(z;\cdot))
 =\alpha_n\gamma_\zeta\exp_\Phi(z/p(\theta)).
\end{equation*}
\end{theorem}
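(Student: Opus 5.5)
The plan is to apply the exponential evaluation map $\EE_{\Phi,z}$ in the variable $u$ to the exact rational identity \eqref{eq:exact-kernel-division} of Lemma~\ref{lem:Cauchy-remainder-imrn}, multiplied by $\gamma_\zeta$. On the left, the first term evaluates to $H_\Phi(z;t)$ by Definition~\ref{def:H-Phi-revised}, and the second term is $R_p(z;t)$ by \eqref{eq:R-p-def}. On the right, the factor $p(t)$ does not involve $u$, so it can be pulled out. The division rule $\EE_{\Phi,z}(\xi/p)=\EE_{\Phi,z/p(\theta)}(\xi)$ of Lemma~\ref{lem:exponential-action-series}, applied with $\xi=\beta(u)/(x_u-x_t)$, turns the remaining term into $p(t)H_\Phi(z/p(\theta);t)$.

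To make this rigorous, I would work termwise with the series of Lemma~\ref{lem:exponential-action-series}. Each of the three terms of \eqref{eq:exact-kernel-division} is a rational function in $u$. Its evaluation is $\sum_m z^{q^m}(D_m^\Phi)^{-1}\xi(\theta^{q^m})$, with the substitution $u=\theta^{q^m}$ and coefficients in $H$ fixed. Substituting $u=\theta^{q^m}$ into \eqref{eq:exact-kernel-division} gives an identity of functions of $t$ for each $m$. One point needs checking first: $p(\theta^{q^m})=p(\theta)^{q^m}\neq0$, since $\iota$ is injective and $p$ is nonconstant. For the right-hand series, $z^{q^m}/(D_m^\Phi p(\theta)^{q^m})=(z/p(\theta))^{q^m}/D_m^\Phi$. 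This term is exactly the $m$-th term of $p(t)^{-1}$ times the right side, because $\xi$ has coefficients in $\Fq(u)\otimes H$ and the denominator $p(u)\in\A$ has $\Fq$-coefficients.

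Convergence in $\mathscr A_p$ then follows in two pieces. The series for $H_\Phi(z;\cdot)$ and $H_\Phi(z/p(\theta);\cdot)$ converge in $\mathbb T_\zeta$ by Proposition~\ref{prop:H-convergence}, and the discs defining $\mathscr A_p$ lie in $\mathbb D_\zeta$. The middle series lies in the finite-dimensional space $\Cinf\otimes_H V_p$ by \eqref{eq:beta-Cauchy-coefficients}. Its coefficient of $e_i(t)$ is $\EE_{\Phi,z}(\alpha_n\mathscr D_{p,i}(u)/p(u))$, and by Definition~\ref{def:exponential-action-revised} with $g=\alpha_n\mathscr D_{p,i}\in A_H$ this equals $\alpha_n\,\mathscr D_{p,i}\diamond_{\Phi,H}\exp_\Phi(z/p(\theta))$. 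We need $\mathscr D_{p,i}\in A_H$; this holds because $\beta=1+c_{N-1}x\in A_H$ and $\DD_i\in A_H$ by \eqref{eq:D-i-closed-imrn}.

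This gives \eqref{eq:H-remainder-coeff-imrn} and shows $R_p\in\Cinf\otimes_H V_p$. The exact identity then places $H_\Phi-R_p$ in $p\mathscr A_p$, so uniqueness in Definition~\ref{def:p-remainder-imrn} (via Proposition~\ref{prop:Vp-analytic-representatives}) yields $[H_\Phi]_p=R_p$. The last coefficient follows from \eqref{eq:D-last-normalized}: since $\mathscr D_{p,n-1}=1=T_0$, we have $1\diamond\mu=\mu$. The main obstacle I expect is justifying that the termwise identity sums correctly in $\mathscr A_p$. Concretely, one must confirm that the right-hand series $\sum_m(z/p(\theta))^{q^m}/D_m^\Phi\cdot k_m(t)$ is precisely the convergent series for $H_\Phi(z/p(\theta);t)$. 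The middle series converges coefficientwise because it is the difference of two convergent series, so the rearrangement is legitimate.
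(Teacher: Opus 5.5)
Your proposal is correct and follows the paper's proof essentially step for step. Like the paper, you evaluate the rational identity \eqref{eq:exact-kernel-division} at $u=\theta^{q^m}$, multiply by $\gamma_\zeta z^{q^m}/D_m^\Phi$, and sum, using $p(\theta^{q^m})=p(\theta)^{q^m}\neq0$ and Proposition~\ref{prop:H-convergence}. You then read off the coefficients from \eqref{eq:beta-Cauchy-coefficients} and Definition~\ref{def:exponential-action-revised}, get uniqueness from Proposition~\ref{prop:Vp-analytic-representatives}, and obtain the last coefficient from \eqref{eq:D-last-normalized}.
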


\begin{proof}
The finite expansion \eqref{eq:beta-Cauchy-coefficients} and
Lemma~\ref{lem:exponential-action-series} give the convergent series
\begin{align*}
 R_p(z;t)=\gamma_\zeta\sum_{m\ge0}\frac{z^{q^m}}{D_m^\Phi}
 \frac{\beta(\theta^{q^m})}{p(\theta^{q^m})}K_p(\theta^{q^m},t),
\end{align*}
in the finite dimensional space $\Cinf\otimes_H V_p$.
Generic characteristic ensures $p(\theta^{q^m})\ne0$, and $p\in\Fq(t)$ gives
$p(\theta^{q^m})=p(\theta)^{q^m}$. Evaluate \eqref{eq:exact-kernel-division}
at $u=\theta^{q^m}$, with the input $z$ still fixed, multiply by $\gamma_\zeta z^{q^m}/D_m^\Phi$, and sum.
The other two series are those of $H_\Phi(z;t)$ and
$p(t)H_\Phi(z/p(\theta);t)$. Proposition~\ref{prop:H-convergence}
proves convergence in $\mathscr A_p$. This proves the exact identity
\eqref{eq:exact-analytic-division}, and Proposition~\ref{prop:Vp-analytic-representatives}
identifies $R_p$ with the unique remainder.

Finally,
$\EE_{\Phi,z}(\mathscr D_{p,i}/p)=\mathscr D_{p,i}\diamond_{\Phi,H}\exp_\Phi(z/p(\theta))$
gives \eqref{eq:H-remainder-coeff-imrn}. The last coefficient follows
from \eqref{eq:D-last-normalized}.
The identity holds on every chosen disc, with its full multiplicity.
\end{proof}

\begin{remark}
The proof uses only divisibility by $p(t)$. If a zero has
multiplicity $m_\nu$, it simultaneously proves equality of the first
$m_\nu$ Hasse--Schmidt coefficients at that zero.
\end{remark}

\subsection{Prime torsion}
\label{subsec:torsion-special}

From now on in this subsection, let $\pideal=(p)\subset\A$ be a nonzero
principal prime ideal, different from the characteristic ideal of
$\Phi$. Then $\A/\pideal\cong\mathbb F_{q^n}$. We keep the
principal period normalization $\ker(\exp_\Phi)=\wt\pi_\Phi\iota(\A)$
from Proposition~\ref{prop:Psi-Phi-comparison}, and define
\begin{align*}
 \mu_p=
 \exp_\Phi\!\left(\frac{\wt\pi_\Phi}{p(\theta)}\right).
\end{align*}
The $p$-remainder of $\omega_\Phi$ is
\begin{align*}
 [\omega_\Phi]_p
 =
 \sum_{i=0}^{n-1}C_i e_i.
\end{align*}
Its last coefficient is denoted by $L_p(\omega_\Phi)=C_{n-1}$.

Fix a subfield $\KPhi\subset\Cinf$ containing $\iota(K)$ and the coefficients of all $\Phi_a$.

\begin{proposition}\label{prop:rank-one-p-torsion}
The map
\begin{equation}\label{eq:rank-one-p-torsion-map}
 \A/\pideal\longrightarrow\Phi[\pideal],
 \qquad
 \bar a\longmapsto\Phi_a(\mu_p),
\end{equation}
is an isomorphism of $\A/\pideal$-modules. In particular,
$\mu_p\neq0$ and it generates $\Phi[\pideal]$. Also,
\begin{equation}\label{eq:leading-coeff-torsion}
 L_p(\omega_\Phi)=\alpha_n\gamma_\zeta\mu_p,
\end{equation}
and consequently
\begin{equation}\label{eq:coefficient-torsion-field}
 \KPhi(\zeta)(C_0,\dots,C_{n-1})
 =
 \KPhi(\zeta)(\Phi[\pideal]).
\end{equation}
\end{proposition}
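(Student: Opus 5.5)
The plan is to treat the three assertions in order: the torsion isomorphism, the leading coefficient, and the field equality.

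\emph{The isomorphism \eqref{eq:rank-one-p-torsion-map}.} I use the lattice description $\ker\exp_\Phi=\wt\pi_\Phi\iota(\A)$ from Proposition~\ref{prop:Psi-Phi-comparison} and the functional equation $\Phi_a\exp_\Phi(z)=\exp_\Phi(a(\theta)z)$. These give
\[
 \Phi_a(\mu_p)=\exp_\Phi\bigl(a(\theta)\wt\pi_\Phi/p(\theta)\bigr).
\]
The map $a\mapsto\Phi_a(\mu_p)$ is $\A$-linear. Its kernel consists of those $a$ with $a(\theta)/p(\theta)\in\iota(\A)$. Since $\iota$ is injective, this says $a\in p\A=\pideal$. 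So the map factors through $\A/\pideal$ and is injective on it; in particular $\mu_p\ne0$.

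For surjectivity I compare cardinalities. Since $\pideal$ is prime and away from the characteristic, Lemma~\ref{lem:finite-motive-structure} with $r=1$ gives $\#\Phi[\pideal]=q^{\dim_{\Fq}\A/\pideal}=q^n$. Here $n=Nd=\dim_{\Fq}\A/p\A$, as in Section~\ref{sec:pairing}. The image lies in $\Phi[\pideal]$ because $\Phi_b(\mu_p)=\exp_\Phi(b(\theta)\wt\pi_\Phi/p(\theta))=0$ for $b\in\pideal$. An injective map between finite sets of the same size is bijective.

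\emph{The leading coefficient \eqref{eq:leading-coeff-torsion}.} I take $z=\wt\pi_\Phi$ in Theorem~\ref{thm:H-remainder-imrn}. The last coefficient of $[H_\Phi(\wt\pi_\Phi;\cdot)]_p=[\omega_\Phi]_p$ is then $\alpha_n\gamma_\zeta\exp_\Phi(\wt\pi_\Phi/p(\theta))=\alpha_n\gamma_\zeta\mu_p$, which is exactly the claim.

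\emph{The field equality \eqref{eq:coefficient-torsion-field}.} For the inclusion $\subseteq$, formula \eqref{eq:H-remainder-coeff-imrn} expresses each
\[
 C_i=\gamma_\zeta\alpha_n\,\mathscr D_{p,i}\diamond_{\Phi,H}\mu_p .
\]
Writing $\mathscr D_{p,i}=\beta\DD_i/\alpha_n\in A_H$ as $\sum_\nu b_\nu\otimes\lambda_\nu$ with $b_\nu\in\A$ and $\lambda_\nu\in H$, this becomes $\sum_\nu\lambda_\nu\Phi_{b_\nu}(\mu_p)$. Each $\Phi_{b_\nu}(\mu_p)$ lies in $\KPhi(\Phi[\pideal])$ and each $\lambda_\nu\in H=\Fq(\zeta)$. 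The constants $\gamma_\zeta=\beta(\theta)^{-1}$ and $\alpha_n\in H$ lie in $\KPhi(\zeta)$ because $\iota(K)\subset\KPhi$. Hence every $C_i$ lies in $\KPhi(\zeta)(\Phi[\pideal])$.

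For $\supseteq$, the element $\alpha_n\gamma_\zeta$ is a nonzero element of $\KPhi(\zeta)$, and $\gamma_\zeta\ne0$ because $\theta\ne\zeta^q$ in generic characteristic. So $\mu_p=C_{n-1}/(\alpha_n\gamma_\zeta)$ lies in the left-hand field. By the isomorphism above, $\mu_p$ generates $\Phi[\pideal]$ through the polynomials $\Phi_a$. These have coefficients in $\KPhi$, so all of $\Phi[\pideal]$ lies in $\KPhi(\zeta)(C_{n-1})$.

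\emph{Main obstacle.} The only delicate point is the cardinality $\#\Phi[\pideal]=q^n$ needed for surjectivity. It requires the hypothesis that $\pideal$ differs from the characteristic ideal, so that Lemma~\ref{lem:finite-motive-structure} applies. It also requires identifying $\dim_{\Fq}\A/\pideal$ with $n=Nd$, which is recorded in Section~\ref{sec:pairing}. Everything else is a direct substitution into Theorem~\ref{thm:H-remainder-imrn}.
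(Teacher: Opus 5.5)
Your proposal is correct and follows the paper's proof essentially step for step. The only cosmetic difference is that you get $\#\Phi[\pideal]=q^n$ from Lemma~\ref{lem:finite-motive-structure} with $r=1$, whereas the paper notes directly that $\Phi[\pideal]=\ker\Phi_p$ and that $\Phi_p$ is separable of $\tau$-degree $n$. One small slip: $\gamma_\zeta=(\theta-\zeta)/(\theta-\zeta^q)$ is nonzero because $\theta\ne\zeta$, while $\theta\ne\zeta^q$ is what makes it well defined; both hold in generic characteristic.
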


\begin{proof}
Since $\Phi_p(\mu_p)=\exp_\Phi(\wt\pi_\Phi)=0$, the map
\eqref{eq:rank-one-p-torsion-map} is well defined. If
$\Phi_a(\mu_p)=0$, the functional equation gives
$\exp_\Phi(a(\theta)\wt\pi_\Phi/p(\theta))=0$, so
$a(\theta)\wt\pi_\Phi/p(\theta)=b(\theta)\wt\pi_\Phi$ for some
$b\in\A$. As $\wt\pi_\Phi\neq0$ and $\A\hookrightarrow\Cinf$ is
injective, $a=pb$ and $\bar a=0$. Both sides of
\eqref{eq:rank-one-p-torsion-map} have $q^n$ elements (the target
because $\Phi_p$ is separable of degree $q^n$), so the map is an
isomorphism.

Theorem~\ref{thm:H-remainder-imrn} at $z=\wt\pi_\Phi$ gives
\eqref{eq:leading-coeff-torsion}. Each $C_i$ is
$\alpha_n\gamma_\zeta$ times an $H$-linear combination of $\Phi_a(\mu_p)$.
Since
$\alpha_n,\gamma_\zeta\in\KPhi(\zeta)^\times$, all coefficients lie in
$\KPhi(\zeta)(\Phi[\pideal])$, while the last coefficient recovers
$\mu_p$. This proves \eqref{eq:coefficient-torsion-field}.
\end{proof}

\begin{remark}\label{rem:split-coeff-not-torsion}
The individual coefficients $C_i$ need not be torsion points.
The operators $\mathscr D_{p,i}$ may contain scalars in $H$ that do
not commute with the Frobenius terms in a Drinfeld polynomial.
Proposition~\ref{prop:rank-one-p-torsion} is a statement about the
\emph{field} generated by the coefficients. The leading coefficient
is a nonzero scalar multiple of the torsion generator
$\mu_p$.
\end{remark}

\subsection{One value for the torsion extension}
\label{subsec:one-value}

The prime $\pideal$ corresponds to a closed point $Q$ of
$\mathbf P^1_{\Fq}\setminus\{P_\rho\}$ with residue field
$\kappa(Q)=\A/\pideal$. A zero $\lambda\in\overline{\Fq}$ of $p$
identifies $\kappa(Q)$ with $\Fq(\lambda)=\mathbb F_{q^n}$, and the
$n$ zeros of $p$ are the distinct conjugates
$\lambda_s=\lambda^{q^s}$ $(0\le s<n)$. The principal divisor of
$p$ has pole divisor $dP_\rho$ and zero divisor $Q$, so $n=\deg Q=dN$.
Hence
\begin{equation}\label{eq:lambda-field}
 H=\mathbb F_{q^N}\subset\Fq(\lambda).
\end{equation}

Recalling $e_i=T_i^{(-1)}$, define the evaluation matrix
\(
 \mathcal V_{p,\lambda}
 =
 \bigl(e_i(\lambda_s)\bigr)_{0\le s,i<n}
\)
and the weights
\begin{equation}\label{eq:interpolation-residue-weights}
 b_s=\frac{\alpha_n}{\beta(\lambda_s)\partial_xp(\lambda_s)}
 \qquad(0\le s<n).
\end{equation}
The residue pairing at the zeros identifies the inverse
of this matrix.

\begin{lemma}\label{lem:evaluation-matrix}
The matrix $\mathcal V_{p,\lambda}$ is invertible over
$\KPhi(\lambda)$, and its inverse has last row $(b_0,\ldots,b_{n-1})$.
\end{lemma}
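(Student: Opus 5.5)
We would prove invertibility through Proposition~\ref{prop:Vp-analytic-representatives}, and identify the last row of the inverse through the Cauchy remainder formula of Lemma~\ref{lem:Cauchy-remainder-imrn}. Since $\pideal$ is a prime away from the characteristic and $(p)$ has zero divisor $Q$ with $n$ distinct conjugates $\lambda_s$, every multiplicity equals one and $m_\infty=0$. The jet algebra of Proposition~\ref{prop:analytic-quotient} is then $\prod_s\Cinf$, with the quotient map given by evaluation at the $\lambda_s$. Hence the evaluation map $V_p\otimes\Cinf\to\Cinf^n$, whose matrix is $\mathcal V_{p,\lambda}$, is an isomorphism. The entries $e_i(\lambda_s)$ lie in $H(\lambda)=\Fq(\lambda)$ by \eqref{eq:lambda-field}, so the inverse is defined over $\Fq(\lambda)\subset\KPhi(\lambda)$. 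Alternatively, one can argue purely algebraically with Proposition~\ref{prop:quotient-basis-imrn}, using $B_p\otimes_H\Fq(\lambda)\simeq\prod_s\Fq(\lambda)$ by the Chinese remainder theorem.

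For the last row, we interpolate. Write $e^\vee=(\mathcal V^{-1})$; its last row sends a value vector $(F(\lambda_s))_s$ to the coefficient $C_{n-1}$ of the unique representative in $V_p$. We apply \eqref{eq:beta-Cauchy-coefficients} with $u$ generic. The representative of $\beta(u)/(x_u-x_t)$ has $e_{n-1}$-coefficient $\alpha_n\mathscr D_{p,n-1}(u)/p(u)=\alpha_n/p(u)$ by \eqref{eq:D-last-normalized}. On the other hand, its value vector is $\bigl(\beta(u)/(x(u)-x(\lambda_s))\bigr)_s$. This gives the identity of rational functions in $u$
\begin{equation*}
 \frac{\alpha_n}{p(u)}=\sum_{s=0}^{n-1}\ell_s\,\frac{\beta(u)}{x(u)-x(\lambda_s)},
\end{equation*}
where $(\ell_s)$ is the last row of $\mathcal V_{p,\lambda}^{-1}$.

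We then compare residues in the variable $x$. Both sides are rational in $x=x(u)$, and the left side has simple poles at $x=x(\lambda_s)$, because $p$ has simple zeros there. Taking the residue at $x(u)=x(\lambda_s)$ on each side gives
\begin{equation*}
 \frac{\alpha_n}{\partial p/\partial x\,(\lambda_s)}=\ell_s\,\beta(\lambda_s).
\end{equation*}
The sign convention must be checked here. The notation $\partial_xp$ is introduced later as $-(t-\zeta)^2dp/dt$, and this equals $dp/dx$, since $dx/dt=-(t-\zeta)^{-2}$. Therefore $\ell_s=\alpha_n/(\beta(\lambda_s)\partial_xp(\lambda_s))=b_s$.

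The main obstacle is a rigorous justification of the coefficient identity. The cleanest route is to apply the uniqueness of representatives from Proposition~\ref{prop:quotient-basis-imrn}, with coefficients extended to the field $\Fq(\lambda)(u)$. We also need to confirm that $\beta(\lambda_s)\ne0$ and that $\partial_xp(\lambda_s)\ne0$. The first holds because $\lambda_s$ avoids the punctures $\zeta^{(1)}$. The second holds by separability of the simple zeros.
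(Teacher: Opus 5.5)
Your proof is correct, but it identifies the last row by a different route from the paper.

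\textbf{The paper's route.} Invertibility comes from the Chinese remainder isomorphism $B_p\otimes_H\KPhi(\lambda)\to\prod_s\KPhi(\lambda)$, which is your ``alternative''; the analytic Proposition~\ref{prop:Vp-analytic-representatives} is heavier than needed. For the last row, the paper uses the global residue theorem to write $\ell_p(F)=\sum_s F(\lambda_s)/\partial_xp(\lambda_s)$. The duality of Proposition~\ref{thm:dual-Casimir-imrn} then gives $C_{n-1}(F)=\ell_p(\DD_{n-1}F)$ with $\DD_{n-1}=\alpha_n/\beta$. Testing on $F=e_i$ yields $b\,\mathcal V_{p,\lambda}=(0,\ldots,0,1)$.

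\textbf{Your route.} You derive a partial-fraction identity in the auxiliary variable $u$ and read off the weights by one-variable residues in $x$ at the points $x(\lambda_s)$. This needs neither the global residue theorem nor the duality statement, only the finite expansion of $K_p$ and $\DD_{n-1}=\alpha_n/\beta$. Your identification $\partial_xp=dp/dx$ and your nonvanishing checks are right.

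\textbf{Your ``main obstacle'' is easier than you suggest.} Since $p(\lambda_s)=0$, evaluating \eqref{eq:K-dual-exp-imrn} at $v=\lambda_s$ gives directly the identity of rational functions $p(u)/(x(u)-x(\lambda_s))=\sum_i\DD_i(u)e_i(\lambda_s)$. Inverting $\mathcal V_{p,\lambda}$ then gives $\DD_i(u)/p(u)=\sum_s(\mathcal V_{p,\lambda}^{-1})_{i,s}/(x(u)-x(\lambda_s))$. No appeal to uniqueness of representatives over $\Fq(\lambda)(u)$ is needed.

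\textbf{What each approach buys.} Both arguments actually give the whole inverse, $(\mathcal V_{p,\lambda}^{-1})_{i,s}=\DD_i(\lambda_s)/\partial_xp(\lambda_s)$. The paper's version explains the weights as residue-pairing coefficients. Yours is a self-contained Lagrange-interpolation computation.
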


\begin{proof}
The zeros $\lambda_0,\dots,\lambda_{n-1}$ are distinct, so $p$ is separable. By
the Chinese remainder theorem,
\begin{equation}\label{eq:CRT-evaluation}
 B_p\tensor_H\KPhi(\lambda)
 \longrightarrow
 \prod_{s=0}^{n-1}\KPhi(\lambda),
 \qquad
 \bar F\longmapsto
 \bigl(F(\lambda_s)\bigr)_{s=0}^{n-1}
\end{equation}
is an isomorphism. Since $H\subset\KPhi(\lambda)$ by
\eqref{eq:lambda-field}, the matrix of
\eqref{eq:CRT-evaluation} in the basis $e_0,\dots,e_{n-1}$ is
exactly $\mathcal V_{p,\lambda}$.

For $F\in\KPhi(\lambda)\otimes_H V_p$, the global residue theorem and $\omega_p=-dx/p$ give
\begin{align*}
 \ell_p(F)=\sum_{s=0}^{n-1}\frac{F(\lambda_s)}{\partial_xp(\lambda_s)}.
\end{align*}
By the duality in Proposition~\ref{thm:dual-Casimir-imrn}, the last coefficient of $F$ is $\ell_p(\DD_{n-1}F)$. Formula~\eqref{eq:D-i-imrn} gives $\DD_{n-1}=\alpha_n/\beta$. Hence
\begin{align*}
 C_{n-1}(F)=\sum_{s=0}^{n-1}
 \frac{\alpha_nF(\lambda_s)}{\beta(\lambda_s)\partial_xp(\lambda_s)}.
\end{align*}
Taking $F=e_i$ identifies the weights with the last row of the inverse matrix. The denominators are nonzero because the zeros are simple and avoid the punctures $P_s$.
\end{proof}

On the generators $Y_m$ one has $(\tau Y_m)(t^q)=Y_m(t)^q$, and
continuity extends this to every element of $\mathbb T_\zeta$.
Applied to $\tau\omega_\Phi=f_\zeta\omega_\Phi$, this gives
\begin{equation}\label{eq:omega-total-q}
 \omega_\Phi(t)^q
 =
 f_\zeta(t^q)\omega_\Phi(t^q)
 =
 \left(\frac{1}{t^q-\zeta}-\frac{1}{\theta-\zeta}\right)
 \omega_\Phi(t^q).
\end{equation}

Put $w_s=\omega_\Phi(\lambda_s)$ for $0\le s<n$. Evaluating
\eqref{eq:omega-total-q} at $\lambda_s$ gives the recursion
\begin{equation}\label{eq:w-recursion}
 w_{s+1}=\frac{w_s^q}{f_\zeta(\lambda_{s+1})},
\end{equation}
whose denominators are nonzero ($f_\zeta$ vanishes only at
the transcendental point $\theta$, while every $\lambda_s$ is a
constant). Writing them as
\begin{align*}
 \delta_0=1,
 \qquad
 \delta_s=\prod_{j=1}^{s}f_\zeta(\lambda_j)^{q^{s-j}}
 \quad(1\le s<n),
\end{align*}
induction in \eqref{eq:w-recursion} yields
\begin{equation}\label{eq:w-from-w0}
 w_s=\frac{w_0^{q^s}}{\delta_s}.
\end{equation}

\begin{proposition}\label{thm:one-value-torsion}
For every finite zero $\lambda\in\overline{\Fq}$ of $p$, one value of
$\omega_\Phi$ generates the full $\pideal$-torsion extension:
\begin{equation}\label{eq:one-value-torsion}
 \KPhi(\lambda,\omega_\Phi(\lambda))
 =
 \KPhi(\lambda,L_p(\omega_\Phi))
 =
 \KPhi(\lambda,\Phi[\pideal]).
\end{equation}
The last coefficient $L_p(\omega_\Phi)$ is recovered from
$\omega_\Phi(\lambda)$ by 
\begin{equation}\label{eq:q-linear-leading-recovery}
 L_p(\omega_\Phi)
 =
 \sum_{s=0}^{n-1}b_sw_s
 =
 \sum_{s=0}^{n-1}
 \frac{b_s}{\delta_s}\,\omega_\Phi(\lambda)^{q^s},
\end{equation}
and the torsion generator by
\begin{equation}\label{eq:explicit-torsion-recovery}
 \mu_p=\gamma_\zeta^{-1}\sum_{s=0}^{n-1}
 \frac{\omega_\Phi(\lambda)^{q^s}}
 {\beta(\lambda_s)\partial_xp(\lambda_s)\delta_s}.
\end{equation}
\end{proposition}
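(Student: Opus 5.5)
The plan is to first prove the recovery formula \eqref{eq:q-linear-leading-recovery}, and then read off the three field equalities from it and from Proposition~\ref{prop:rank-one-p-torsion}.

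\emph{Step 1: the recovery formula.} Here $p$ has exactly the $n$ simple finite zeros $\lambda_s=\lambda^{q^s}$, so $m_\infty=0$. The algebra $\mathscr A_p/p\mathscr A_p$ is then the product of the evaluations at the $\lambda_s$. By Definition~\ref{def:p-remainder-imrn}, the remainder $[\omega_\Phi]_p=\sum_i C_ie_i$ is characterized by
\[
 \sum_i C_ie_i(\lambda_s)=\omega_\Phi(\lambda_s)=w_s\qquad(0\le s<n).
\]
In matrix form this reads $\mathcal V_{p,\lambda}(C_i)_i=(w_s)_s$. By Lemma~\ref{lem:evaluation-matrix}, $\mathcal V_{p,\lambda}$ is invertible and the last row of its inverse is $(b_s)$. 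Hence $L_p(\omega_\Phi)=C_{n-1}=\sum_s b_sw_s$. Substituting \eqref{eq:w-from-w0} gives the second expression in \eqref{eq:q-linear-leading-recovery}. To obtain \eqref{eq:explicit-torsion-recovery}, combine this with \eqref{eq:leading-coeff-torsion}, namely $L_p(\omega_\Phi)=\alpha_n\gamma_\zeta\mu_p$, and insert the definition \eqref{eq:interpolation-residue-weights} of $b_s$. The factor $\alpha_n$ cancels, and dividing by $\gamma_\zeta$ gives the formula for $\mu_p$.

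\emph{Step 2: the field equalities.} Everything takes place inside $\KPhi(\lambda)$, which contains $H$ by \eqref{eq:lambda-field}. It therefore contains $\zeta$, $\gamma_\zeta$, $\alpha_n$, every $\beta(\lambda_s)$, every $\partial_xp(\lambda_s)$, and every $\delta_s$, since $f_\zeta(\lambda_j)$ is built from $\lambda$, $\zeta$ and $\theta$.

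The first inclusion is $\KPhi(\lambda,L_p)\subseteq\KPhi(\lambda,\omega_\Phi(\lambda))$. This follows from \eqref{eq:q-linear-leading-recovery}, because every coefficient $b_s/\delta_s$ lies in $\KPhi(\lambda)$.

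The equality $\KPhi(\lambda,L_p)=\KPhi(\lambda,\Phi[\pideal])$ follows from \eqref{eq:leading-coeff-torsion}. Since $\alpha_n\gamma_\zeta\in\KPhi(\lambda)^\times$, adjoining $L_p$ is the same as adjoining $\mu_p$. By Proposition~\ref{prop:rank-one-p-torsion}, $\mu_p$ generates $\Phi[\pideal]$ as the set of values $\Phi_a(\mu_p)$, and these are polynomials over $\KPhi$.

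The remaining inclusion is $\omega_\Phi(\lambda)\in\KPhi(\lambda,\Phi[\pideal])$. By Proposition~\ref{prop:rank-one-p-torsion}, every $C_i$ lies in $\KPhi(\zeta)(\Phi[\pideal])\subseteq\KPhi(\lambda,\Phi[\pideal])$. Then $w_0=\sum_iC_ie_i(\lambda)$, and each $e_i(\lambda)$ lies in $H(\lambda)$.

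\emph{Main obstacle.} The delicate point is Step 1's identification of the analytic remainder with the interpolation problem. This requires $\omega_\Phi=H_\Phi(\wt\pi_\Phi;\cdot)$ to lie in $\mathscr A_p$, and its values at the $\lambda_s$ to be the $w_s$ that appear in \eqref{eq:w-recursion}. Proposition~\ref{prop:analytic-quotient} supplies analyticity on discs around the $\lambda_s$, which lie in $\mathbb D_\zeta$. Evaluating \eqref{eq:omega-total-q} is legitimate because the $\lambda_s\in\overline{\Fq}$ are not punctures. With $m_\infty=0$, simplicity of the zeros makes the jet algebra equal to $\Cinf^n$ via evaluation, so the characterization in Step 1 is exact.
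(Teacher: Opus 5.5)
Your proposal is correct and follows the paper's proof. You set up the same evaluation system $w=\mathcal V_{p,\lambda}C$, read off $L_p(\omega_\Phi)=\sum_s b_sw_s$ from the last row of the inverse, and substitute $w_s=w_0^{q^s}/\delta_s$ and $L_p(\omega_\Phi)=\alpha_n\gamma_\zeta\mu_p$. The only cosmetic difference is that you close a cycle of inclusions, whereas the paper writes the chain $L(w_0,\ldots,w_{n-1})=L(C_0,\ldots,C_{n-1})=L(\Phi[\pideal])=L(w_0)$ with $L=\KPhi(\lambda)$.
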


\begin{proof}
Since $\omega_\Phi-[\omega_\Phi]_p$ is divisible by $p$, evaluating
at the $n$ zeros gives
\begin{equation}\label{eq:evaluation-system}
 \begin{pmatrix} w_0\\ \vdots\\ w_{n-1}\end{pmatrix}
 =
 \mathcal V_{p,\lambda}
 \begin{pmatrix} C_0\\ \vdots\\ C_{n-1}\end{pmatrix}.
\end{equation}
Put $L=\KPhi(\lambda)$. Equation \eqref{eq:lambda-field} gives $H\subset L$,
so every $b_s$ and $\delta_s$ lies in $L$. The invertible evaluation
matrix, Proposition~\ref{prop:rank-one-p-torsion}, and
\eqref{eq:w-from-w0} give, respectively,
\begin{align*}
 L(w_0,\ldots,w_{n-1})
 =L(C_0,\ldots,C_{n-1})
 =L(\Phi[\pideal])
 =L(w_0).
\end{align*}
The last equality uses only forward $q$-powers and the nonzero
$\delta_s$. Taking the last row of the inverse of
\eqref{eq:evaluation-system} and substituting \eqref{eq:w-from-w0}
proves \eqref{eq:q-linear-leading-recovery}. Substituting the weights
\eqref{eq:interpolation-residue-weights} and
$L_p(\omega_\Phi)=\alpha_n\gamma_\zeta\mu_p$ gives
\eqref{eq:explicit-torsion-recovery}. Finally,
$\alpha_n,\gamma_\zeta\in L^\times$ and \eqref{eq:leading-coeff-torsion}
show that the last coefficient generates $L(\mu_p)=L(\Phi[\pideal])$.
This proves all asserted field equalities.
\end{proof}

\begin{remark}[The ordinary point at infinity]\label{rem:one-value-finite-only}
The proof of \eqref{eq:one-value-torsion} uses the coordinate $t$, but the statement also holds at $P_\infty$ in the only possible case. If the prime ideal of $P_\infty$ is principal, say $(p)$, then $\operatorname{div}(p)=P_\infty-dP_\rho$ gives $dN=1$, hence $N=d=1$. Then $\A=\Fq[x]$ and $\Phi_x=\Theta+\tau$ is the Carlitz module. With $v=\omega_\Phi(P_\infty)=\kappa\ne0$ one has $\Phi_x(v)=0$, so $\Phi[(x)]=\Fq v$ and $\KPhi(v)=\KPhi(\Phi[(x)])$.
\end{remark}

\subsection{A complete example with a quadratic puncture}
\label{subsec:worked-example}

We compute the operator, the remainder and the torsion
recovery formula for a quadratic puncture. Take $q=3$, $\rho(t)=t^2+1$, and choose $\zeta\in H=\mathbb F_9$
with $\zeta^2=-1$. Thus $\zeta^3=-\zeta$ and $N=2$.
Throughout this example, $\Phi$ is the specified normalized rank-one
module of Subsection~\ref{subsec:normalized-dual}, with the period
normalization $\ker\exp_\Phi=\wt\pi_\Phi\iota(\A)$.
The generators of $\A$ and the chosen prime are
\begin{align*}
 a_0(t)=\frac1{t^2+1},\qquad
 a_1(t)=\frac{t}{t^2+1},\qquad
 p(t)=\frac{t^2+t+2}{t^2+1}=1+a_0(t)+a_1(t).
\end{align*}
Here $\A=\mathbb F_3[a_0,a_1]$, with
$a_1^2=a_0-a_0^2$. The numerator of $p$ is irreducible over
$\mathbb F_3$, is coprime to $\rho$, and $p(P_\infty)=1$.
Consequently $(p)$ is a finite principal prime, its residue field has
nine elements, and $d=1$, $n=2$. The two zeros of $p$ are
\begin{align*}
 \lambda_0=\lambda=1+\zeta,\qquad
 \lambda_1=\lambda^3=1-\zeta.
\end{align*}

For brevity set $\alpha=1-\zeta$, $b=1+\zeta$, and
$e(t)=(t+\zeta)^{-1}$. The expansion in $(T_k)$ and the representative
basis are
\begin{align*}
 T_0=1,\qquad T_1=\frac1{t-\zeta},\qquad
 T_2=\frac1{t^2+1},\qquad
 p=T_0+T_1+\alpha T_2,\qquad
 (e_0,e_1)=(1,e).
\end{align*}
In particular $\alpha_2=\alpha\ne0$ and
$V_p=\Span_H\{1,e\}$. The residue dual functions and $K_p$ are
\begin{align*}
 \DD_0(t)&=1+\alpha e(t)=\frac{t+1}{t+\zeta},&
 \DD_1(t)&=\alpha+b e(t)
 =\alpha\frac{t-\zeta}{t+\zeta},\\
 K_p(u,v)&=\DD_0(u)+\DD_1(u)e(v)\\
 &=1+\alpha\bigl(e(u)+e(v)\bigr)+b e(u)e(v)
 =\frac{uv+u+v-1}{(u+\zeta)(v+\zeta)}.
\end{align*}
For example, direct residues for $\omega_p=-dx/p$ give
$\ell_p(1)=b$, $\ell_p(e)=-\alpha$, and $\ell_p(e^2)=1$.
Since $\alpha^2=\zeta$ and $\alpha b=-1$, these values verify
$\langle\DD_j\omega_p,e_i\rangle_H=\delta_{ij}$.
The relation in the quotient is
\begin{align*}
 e(t)^2\equiv b-\zeta e(t)\pmod p.
\end{align*}
It also makes the higher representative explicit. Writing
$H_j=e(t_j)$, multiplying $K_p(t_1,t_3)$ and $K_p(t_2,t_3)$ and then reducing gives
\begin{align*}
 \OO_{p,\zeta}^{[3]}(t_1,t_2,t_3)
 &=\operatorname{rem}_{p,t_3}
 \bigl(K_p(t_1,t_3)K_p(t_2,t_3)\bigr)\\
 &=\zeta+\zeta(H_1+H_2+H_3)
 +H_1H_2+H_1H_3+H_2H_3.
\end{align*}

The analytic normalizing factors are
\begin{align*}
 \Theta=\frac1{\theta-\zeta},\qquad
 \beta(t)=\frac{t+\zeta}{t-\zeta},\qquad
 \gamma_\zeta=\frac{\theta-\zeta}{\theta+\zeta}.
\end{align*}
The normalization of $\Phi$ gives the additive
polynomials on the generators of $\A$,
\begin{align*}
 \Phi_{a_0}(V)
 &=\frac{V}{\theta^2+1}
 +\frac{\theta}{(\theta+\zeta)^2}V^3
 +\gamma_\zeta^9 V^9,\\
 \Phi_{a_1}(V)
 &=\frac{\theta}{\theta^2+1}V
 +\frac{\theta^2-1}{(\theta+\zeta)^2}V^3
 -\zeta\gamma_\zeta^9 V^9.
\end{align*}
Their coefficients follow by comparing the terms of degree $1,3,9$
in $\log_\Phi(\Phi_{a_j}(V))=a_j(\theta)\log_\Phi(V)$.
They commute under composition and satisfy
$\Phi_{a_1}\circ\Phi_{a_1}
 =\Phi_{a_0}-\Phi_{a_0}\circ\Phi_{a_0}$, as required by the relation
between $a_0$ and $a_1$. In particular,
\begin{align*}
 \Phi_p(V)
 =\frac{\theta^2+\theta+2}{\theta^2+1}V
 +\frac{\theta^2+\theta-1}{(\theta+\zeta)^2}V^3
 +\alpha\gamma_\zeta^9V^9.
\end{align*}
The operator coefficients are
\begin{align*}
 \mathscr D_{p,0}(t)
 =\frac{t+1}{\alpha(t-\zeta)},\qquad
 \mathscr D_{p,1}(t)=1.
\end{align*}
For any $z\in\Cinf$, put
$\mu_z=\exp_\Phi(z/p(\theta))$. Since
$(t-\zeta)^{-1}=a_1(t)+\zeta a_0(t)$, the coefficient formula becomes
\begin{align*}
 R_p(z;t)&=C_0(z)+C_1(z)e(t),\\
 C_0(z)&=\gamma_\zeta\bigl(
 \mu_z+b\Phi_{a_1}(\mu_z)
 +(\zeta-1)\Phi_{a_0}(\mu_z)\bigr),\\
 C_1(z)&=\alpha\gamma_\zeta\mu_z.
\end{align*}
The exact division identity, on the analytic neighbourhood used in
Theorem~\ref{thm:H-remainder-imrn}, is therefore
\begin{align*}
 H_\Phi(z;t)-C_0(z)-\frac{C_1(z)}{t+\zeta}
 =\frac{t^2+t+2}{t^2+1}\,
 H_\Phi\!\left(\frac{z}{p(\theta)};t\right).
\end{align*}

Now take $z=\wt\pi_\Phi$ and write $\mu_p=\mu_z$,
$C_i=C_i(z)$, and $w_s=\omega_\Phi(\lambda_s)$.
Since $\Phi_p(\mu_p)=0$, the preceding coefficients simplify to
\begin{align*}
 C_0=\gamma_\zeta\bigl(\Phi_{a_0}(\mu_p)-\zeta\mu_p\bigr),
 \qquad C_1=\alpha\gamma_\zeta\mu_p.
\end{align*}
Set $a=\alpha^{-1}=-1-\zeta$. Direct evaluation gives
\begin{align*}
 \mathcal V_{p,\lambda}
 &=\begin{pmatrix}1&a\\1&1\end{pmatrix},\qquad
 \mathcal V_{p,\lambda}^{-1}
 =\frac1\alpha\begin{pmatrix}-1&a\\1&-1\end{pmatrix},\\
 \begin{pmatrix}C_0\\C_1\end{pmatrix}
 &=\frac1\alpha
 \begin{pmatrix}-w_0+a w_1\\w_0-w_1\end{pmatrix}.
\end{align*}
Indeed $a-1=\alpha$, so the displayed matrix is invertible.
The Frobenius equation gives
\begin{align*}
 w_1=\frac{w_0^3}{f_\zeta(\lambda_1)},\qquad
 f_\zeta(\lambda_1)=\zeta-1-\Theta\ne0.
\end{align*}
Combining this with $\alpha^2=\zeta$ recovers the torsion generator
by the explicit additive polynomial
\begin{equation}\label{eq:example-torsion-recovery}
 \mu_p=
 \frac1{\zeta\gamma_\zeta}
 \left(w_0-\frac{w_0^3}{\zeta-1-\Theta}\right).
\end{equation}
Conversely, evaluation of the remainder gives
\begin{align*}
 w_0=\gamma_\zeta\bigl(\Phi_{a_0}(\mu_p)+\alpha\mu_p\bigr).
\end{align*}
The class of $a_0$ at $\lambda$ is $a\notin\mathbb F_3$.
Hence $1,\bar a_0$ form an $\mathbb F_3$-basis of $\A/(p)$, and
\begin{align*}
 \Phi[(p)]
 =\{i\mu_p+j\Phi_{a_0}(\mu_p):i,j\in\mathbb F_3\}.
\end{align*}
Finally $\zeta=\lambda-1$, so all coefficients in
\eqref{eq:example-torsion-recovery} lie in $\KPhi(\lambda)$.
The explicit form of both field equalities is
\begin{align*}
 \KPhi\bigl(\lambda,\omega_\Phi(\lambda)\bigr)
 =\KPhi(\lambda,\mu_p)
 =\KPhi\bigl(\lambda,\Phi[(p)]\bigr).
\end{align*}
Here $d=1$ and $\Fq(\lambda)=H$. The coefficients
$\gamma_\zeta^9$ and $-\zeta\gamma_\zeta^9$ of
$\Phi_{a_0}$ and $\Phi_{a_1}$ belong to $\KPhi$. Their ratio gives
$\zeta\in\KPhi$, hence $\lambda\in\KPhi$. Thus $\lambda$ may be
omitted from the three fields in this example.

\section{Gauss--Thakur sums at the chosen branch}
\label{sec:gauss}

We apply the evaluation theorem of Gazda--Maurischat
\cite[Theorem~5.2]{GazdaMaurischat2021} to the special function
$\omega_\Phi$ of $\Psi$. Fix $\pideal=(p)$
and a zero $\lambda$ of $p$. Then $\A/\pideal\simeq\mathbb F_{q^n}$ with $n=Nd$.
To distinguish the $q^N$-orbit used here from the $q$-orbit of
Section~\ref{sec:drinfeld}, write
$\lambda_{[j]}=\lambda^{q^{Nj}}$ for $0\le j<d$.

\subsection{The different and evaluation characters}
\label{subsec:split-different}

\begin{definition}[Evaluation character]
\label{def:Teichmuller-character}
The character determined by evaluation at $\lambda$ is
\begin{align*}
 \vartheta_{\pideal,\lambda}:(\A/\pideal)^\times
 \longrightarrow\mathbb F_{q^n}^{\times},
 \qquad \bar a\longmapsto a(\lambda).
\end{align*}
For $0\le j<d$, write $\chi$ for the zero-extension of
$\vartheta_{\pideal,\lambda}^{q^{Nj}}$, so that
$\chi(\bar a)=a(\lambda_{[j]})$.
\end{definition}

Since $N\mid n$, the algebra $(\A/\pideal)\otimes_{\Fq}H$ is a product of
$N$ copies of $\mathbb F_{q^n}$, each of degree $d$ over $H$.
The maps $a\otimes c\mapsto a(\lambda_{[j]})c$ are the $d$
$H$-embeddings of the factor selected by $\lambda$.

\begin{lemma}[The different]\label{lem:split-different}
The K\"ahler different agrees with the trace different used by
Gazda--Maurischat, and
\begin{equation}\label{eq:split-different}
 \dfrak_{\A/\Fq[p]}\otimes_{\Fq}H
 =\dfrak_{A_H/H[p]}=(\partial_xp).
\end{equation}
Here
\begin{equation}\label{eq:px-pt}
 \partial_xp=-(t-\zeta)^2\frac{dp}{dt},
\end{equation}
and $\partial_xp(\lambda_{[j]})\ne0$ for $0\le j<d$.
\end{lemma}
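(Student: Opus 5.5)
The plan is to compute the Kähler different directly in the coordinate $x$ after the constant extension to $H$, and then descend to $\Fq$.

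\emph{Step 1: reduce to a localization of $H[x]$.} By \eqref{eq:A_H-local}, $A_H$ is the localization of $H[x]$ at the multiplicative set generated by the units $1+c_\ell x$. The structure map $H[p]\to A_H$ is finite, because $p$ is nonconstant in $\A$ and $A_H$ is the integral closure of $H[p]$ in $H(t)$.

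\emph{Step 2: the module of differentials and the Kähler different.} Kähler differentials commute with localization, so $\Omega_{A_H/H}=A_H\,dx$ is free of rank one. The relative module is therefore
\[
 \Omega_{A_H/H[p]}=A_H\,dx/A_H\,dp
 =A_H/(\partial_xp),
 \qquad dp=\partial_xp\,dx .
\]
Its Fitting ideal, which is the Kähler different, is the principal ideal $(\partial_xp)$. Since $dx=-(t-\zeta)^{-2}dt$, we get $\partial_xp=dp/dx=-(t-\zeta)^2\,dp/dt$, which is \eqref{eq:px-pt}.

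\emph{Step 3: compatibility with the constant extension.} Kähler differentials commute with the flat base change $\Fq\to H$. This gives $\Omega_{\A/\Fq[p]}\otimes_{\Fq}H=\Omega_{A_H/H[p]}$. Fitting ideals commute with base change, so $\dfrak_{\A/\Fq[p]}\otimes_{\Fq}H=\dfrak_{A_H/H[p]}$.

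\emph{Step 4: comparison with the trace different.} I expect this to be the main obstacle, because of the local monogenicity needed in the tamely ramified case. Both differents are compatible with localization and completion, so it suffices to compare them at each maximal ideal $\mathfrak m$ of $A_H$. The residue fields are finite, hence perfect, so the completed local extension over the completion of $H[p]$ at $\mathfrak m\cap H[p]$ is monogenic. It is generated by a uniformizer together with a lift of a residue field generator, or by a single element $y$ with separable minimal polynomial $g$ when $y$ can be chosen as a uniformizer.

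For such a monogenic local extension, Euler's lemma identifies the trace different (the inverse of the trace dual) with $(g'(y))$. The Kähler module is $\mathcal O[y]/(g(y))\,dy/(g'(y)\,dy)$, whose Fitting ideal is also $(g'(y))$. Hence the two differents agree locally, and therefore globally. This is the standard result (Serre, \emph{Local Fields}, III §7, together with Kunz for Kähler differents of finite flat extensions of Dedekind rings), and I would cite it rather than reprove it. Flatness holds because $A_H$ is torsion-free and finite over the PID $H[p]$.

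\emph{Step 5: nonvanishing at the zeros.} In the prime setting, $p$ has $n$ distinct zeros $\lambda_s$ by Lemma~\ref{lem:evaluation-matrix}'s argument. So $p$ is separable at the point $\lambda$: the zero is simple, hence $dp/dt(\lambda_{[j]})\ne0$. The points $\lambda_{[j]}$ are not punctures, so $(\lambda_{[j]}-\zeta)^2\ne0$. Therefore $\partial_xp(\lambda_{[j]})\ne0$.

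Equivalently, $\partial_xp(\lambda_{[j]})\ne0$ says that $\pideal$ is unramified in $A_H/H[p]$. This is consistent with $\partial_xp$ being the denominator of the residue weights in Lemma~\ref{lem:evaluation-matrix}.
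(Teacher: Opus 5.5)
Your proof is correct, but it reaches the comparison of the two differents by a different route from the paper.

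\textbf{The paper's route.} It writes $p=f(x)/h(x)$ with $f,h\in H[x]$ coprime. From the pole divisor it shows that $h=\prod_{s=1}^{N-1}(1+c_sx)^d$ is a unit of $A_H$. A B\'ezout relation then gives $A_H=H[p,x]\simeq H[p][X]/(F)$ with $F(X)=f(X)-ph(X)$. So $A_H$ is \emph{globally} monogenic over $H[p]$. Both the trace different (by Euler's lemma) and the K\"ahler different are then generated by $F'(x)=h(x)\partial_xp$, hence equal $(\partial_xp)$ at once. Faithful flatness of $\A\to A_H$ then descends the equality to $\Fq$.

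\textbf{Your route.} You compute the K\"ahler different directly from $\Omega_{A_H/H}=A_H\,dx$, which needs no presentation of $p$. You get agreement with the trace different from the general local theorem: completed local extensions are monogenic because the residue fields are finite. This is more general and avoids the $f/h$ bookkeeping. The cost is reliance on the local monogenicity theorem and a local--global argument, whereas the paper's global presentation keeps everything self-contained.

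\textbf{Three small repairs.}
\begin{enumerate}
\item[(i)] Establish separability of $H(t)/H(p)$, i.e.\ $\partial_xp\ne0$, before Step~4. Both the trace different and your separable minimal polynomial $g$ depend on it. It already follows from your Step~5, since $(p)$ prime forces simple zeros, so this is just a reordering.
\item[(ii)] The lemma compares the differents of $\A/\Fq[p]$, but you argue at maximal ideals of $A_H$. Either run the same local argument for $\A$ over $\Fq[p]$, where it applies verbatim, or descend as the paper does. Descent needs that the trace different commutes with the base change $\Fq\to H$, followed by faithful flatness.
\item[(iii)] Serre's lemma (\emph{Local Fields}, III \S6, Prop.~12) always yields a single generator when the residue extension is separable. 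The ``uniformizer together with a lift'' alternative and the appeal to the tamely ramified case are therefore unnecessary.
\end{enumerate}
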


\begin{proof}
Since $(p)$ is prime, the zeros of $p$ are simple and $dp\ne0$.
Write $p=f(x)/h(x)$ with coprime $f,h\in H[x]$, choosing $f$ monic. Its pole divisor
$dP_\rho$ gives, up to nonzero constant factors,
\begin{align*}
 h(x)=\prod_{s=1}^{N-1}(1+c_sx)^d,
 \qquad \deg h=(N-1)d,\qquad \deg f=Nd.
\end{align*}
If $af+bh=1$, then $h(ap+b)=1$. Hence
$A_H=H[x,h^{-1}]=H[p,x]$.
The polynomial $F(X)=f(X)-ph(X)$ is monic, vanishes at $x$, and has degree $Nd=[H(x):H(p)]$. It is therefore the minimal polynomial of $x$. Thus
\[
 A_H\simeq H[p][X]/(F(X)).
\]
The different of the finite separable monogenic extension $A_H/H[p]$
is generated by
\[
 F'(x)=f'(x)-ph'(x)=h(x)\partial_xp.
\]
Since $h$ is a unit, this ideal is $(\partial_xp)$. The same ideal is the annihilator of
\[
 \Omega^1_{A_H/H[p]}=A_H\,dx/(\partial_xp)A_H\,dx.
\]

It remains to descend from $H$ to $\Fq$. The inverse different of $\A$ over $\Fq[p]$ commutes with the constant extension to $H$, by compatibility of the trace with flat base change. The same holds for the trace different of \cite[Section~3.2]{GazdaMaurischat2021}. For the monogenic extension above, the trace different is also generated by $F'(x)$, so both differents equal $(\partial_xp)$ over $H$. The module $\Omega^1_{\A/\Fq[p]}$ and its annihilator also commute with flat base change, so faithful flatness identifies the two differents over $\Fq$. This proves the equality of ideals in \eqref{eq:split-different}.

Finally, $dx=-(t-\zeta)^{-2}dt$ gives \eqref{eq:px-pt}. At each zero $\lambda_{[j]}$ the difference $x-x(\lambda_{[j]})$ is a local parameter, so $\partial_xp(\lambda_{[j]})\ne0$.
\end{proof}

\subsection{Special functions and the additive character}
\label{subsec:additive-character}

Lemma~\ref{lem:Tate-algebra-comparison} identifies $\mathbb T_\zeta$
with the completion used in \cite[Definition~3.1]{GazdaMaurischat2021},
including the two $\A$-actions and the evaluation maps. We use this
identification for both the period map and the evaluation theorem.

Put $\Lambda_\Psi=\ker\exp_\Psi$. The special functions of $\Psi$ are
\begin{align*}
 \operatorname{sf}(\Psi)=\{\omega\in\mathbb T_\zeta
 \mid
 a(t)\omega=\Psi_a(\omega)\text{ for all }a\in\A\}.
\end{align*}
Here $\Psi_a$ acts coefficientwise, fixing the function variable.
This is an $\A$-module, not generally an $H$-vector space.
Equation~\eqref{eq:omega-special-equation} gives
$\tau^j\omega_\Phi=s_j\omega_\Phi$. Therefore
\[
 \Psi_a(\omega_\Phi)
 =\sum_j b_j(a)\tau^j\omega_\Phi
 =\left(\sum_jb_j(a)s_j\right)\omega_\Phi
 =a(t)\omega_\Phi,
\]
so $\omega_\Phi\in\operatorname{sf}(\Psi)$.

\begin{proposition}[Gazda--Maurischat]
\label{prop:GM-period-recalled}
For the separating element $p$, there is an $\A$-linear isomorphism
\begin{equation}\label{eq:delta-p-GM}
 \delta_p:\dfrak_{\A/\Fq[p]}\Lambda_\Psi
 \xrightarrow{\ \sim\ }\operatorname{sf}(\Psi).
\end{equation}
\end{proposition}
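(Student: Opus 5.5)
This proposition is cited from Gazda--Maurischat, so the plan is to specialize their construction to $\A$ with the separating element $p$, and to check that their hypotheses hold in the present setting. I will work in $\mathbb T_\zeta$, identified with $\A\widehat\otimes_{\Fq}\Cinf$ by Lemma~\ref{lem:Tate-algebra-comparison}. That lemma matches multiplication by $a\otimes1$ with multiplication by $a(t)$, and coefficientwise $\Psi_a$ with $\operatorname{id}\otimes\Psi_a$. It also gives $\mathbb T_\zeta^{\tau=1}=\A$. Under this identification, $\operatorname{sf}(\Psi)$ is exactly the module of special functions in \cite[Definition~3.1]{GazdaMaurischat2021}, with the same $\A$-module structure.

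The map $\delta_p$ is constructed as follows. For $\xi\in\dfrak_{\A/\Fq[p]}^{-1}$ and $\lambda'\in\Lambda_\Psi$, one forms the series
\[
\sum_{k\ge0}\exp_\Psi\!\left(\frac{\lambda'}{p(\theta)^{k+1}}\right)p(t)^k,
\]
twisted by the trace-dual basis. Equivalently, one takes $\sum_i a_i^\vee(t)\exp_\Psi(a_i(\theta)\lambda'/(p(\theta)-p(t)))$ over an $\Fq[p]$-basis $(a_i)$ of $\A$ and its trace-dual basis $(a_i^\vee)$. Convergence in the Gauss norm needs $|p(t)|\le1$ on $\mathbb D_\zeta$, shown in the proof of Lemma~\ref{lem:Tate-algebra-comparison}, together with entireness of $\exp_\Psi$. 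The special-function property $\Psi_a(\omega)=a(t)\omega$ then follows from the functional equation of $\exp_\Psi$ and telescoping in $k$. This is the content of \cite[Theorem~3.11]{GazdaMaurischat2021}, which I will invoke rather than reprove.

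Next I need to check the hypotheses: $\A$ is the ring of functions regular away from one closed point, $\Psi$ is a rank-one Drinfeld $\A$-module with generic characteristic, and $p\in\A\setminus\Fq$ makes $\A$ finite flat over $\Fq[p]$. For the twisting ideal, Lemma~\ref{lem:split-different} identifies the Kähler different with the trace different used by Gazda--Maurischat. It also computes this ideal as $(\partial_xp)$ after extension to $H$. Hence the source module $\dfrak_{\A/\Fq[p]}\Lambda_\Psi$ is the same lattice in both normalizations, and their theorem gives an $\A$-linear bijection onto $\operatorname{sf}(\Psi)$.

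The main obstacle is the normalization check. Their Tate algebra and the action of $\tau$ must match ours, including the coefficient-Frobenius convention in which $\tau$ fixes $t$ but acts on $\zeta$-coefficients only through $\Cinf$. Lemma~\ref{lem:Tate-algebra-comparison} handles this, since $a(t)$ has $\Fq$-coefficients. Beyond that, I will only confirm that the different appearing there is defined for $\A$ over $\Fq[p]$ before base change to $H$, which is the descent step in Lemma~\ref{lem:split-different}.
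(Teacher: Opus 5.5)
Your proposal takes the same approach as the paper, which likewise just invokes \cite[Theorem~3.11]{GazdaMaurischat2021}; Lemma~\ref{lem:Tate-algebra-comparison} and Lemma~\ref{lem:split-different} supply exactly the identifications you propose to check (the Tate algebra, the two $\A$-actions and the different). One correction to your aside on the construction: the input of $\delta_p$ is $\eta\in\dfrak_{\A/\Fq[p]}\Lambda_\Psi$, which \cite[Lemma~3.8]{GazdaMaurischat2021} identifies with a balanced element of $\A\otimes_{\Fq[p]}\Lambda_\Psi$, not a pair $\xi\in\dfrak_{\A/\Fq[p]}^{-1}$, $\lambda'\in\Lambda_\Psi$ (the trace-dual basis lies in $\dfrak_{\A/\Fq[p]}^{-1}$, so $\sum_i a_i^\vee(t)\exp_\Psi(\cdots)$ lies in $\mathbb T_\zeta$ only when $\lambda'\in\dfrak_{\A/\Fq[p]}\Lambda_\Psi$), and it is this balancedness, not telescoping in $k$ (which only handles $a=p$), that gives $\Psi_a(\omega)=a(t)\omega$ for $a\notin\Fq[p]$.
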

This is \cite[Theorem~3.11]{GazdaMaurischat2021}. In its construction,
the balanced submodule of $\A\otimes_{\Fq[p]}\Lambda_\Psi$
consists of elements on which the two $\A$-actions agree.
Multiplication identifies it with
$\dfrak_{\A/\Fq[p]}\Lambda_\Psi$
\cite[Lemma~3.8]{GazdaMaurischat2021}.
For a lift to $\A\otimes_{\Fq}\Lambda_\Psi$, apply
$(\operatorname{id}\widehat\otimes\exp_\Psi)
(1\otimes p(\theta)-p\otimes1)^{-1}$ in the completed ambient
space. The result is independent of the lift.

Constant extension by $H$ gives the formal tensor
products $H\otimes_{\Fq}\Lambda_\Psi$ and
$\operatorname{id}_H\otimes\delta_p$, with action
$(a\otimes c)(d\otimes\ell)=cd\otimes a(\theta)\ell$. Multiplying
constants into $\Cinf$ need not commute with $\Psi_a$, so these
products stay formal.
The image of multiplication by $\partial_xp$ on
$H\otimes_{\Fq}\Lambda_\Psi$ is
$(\dfrak_{\A/\Fq[p]}\Lambda_\Psi)\otimes_{\Fq}H$ by
\eqref{eq:split-different}. This multiplication is injective:
$\Lambda_\Psi$ is a torsion-free $\A$-module, so its constant
extension embeds in $H(t)\otimes_K(K\otimes_{\A}\Lambda_\Psi)$,
and $\partial_xp\ne0$ acts invertibly on the latter space. Since
the constant extension of $\delta_p$ is an isomorphism, there is a unique
$\varpi_{\Psi,\omega}\in H\otimes_{\Fq}\Lambda_\Psi$ satisfying
\begin{equation}\label{eq:delta-omega}
 (\operatorname{id}_H\otimes\delta_p)\bigl((\partial_xp)\varpi_{\Psi,\omega}\bigr)
 =1\otimes\omega_\Phi.
\end{equation}
Define
\begin{equation}\label{eq:additive-character-omega}
 \psi_\omega(\bar a)
 =(\operatorname{id}_H\otimes\exp_\Psi)\left(
 (\operatorname{id}_H\otimes a(\theta)/p(\theta))
 \varpi_{\Psi,\omega}\right)
 \in H\otimes_{\Fq}\Psi[\pideal].
\end{equation}

\begin{lemma}[The scalar-extended additive character]\label{lem:additive-character}
The map \eqref{eq:additive-character-omega} is well-defined and
$\A$-linear on $\A/\pideal$. For each period in a
tensor presentation of $\varpi_{\Psi,\omega}$ it recovers the
additive character of \cite[Definition~4.9]{GazdaMaurischat2021},
with $u_{\pideal}=p$ and $z_{\pideal}=p^{-1}$.
\end{lemma}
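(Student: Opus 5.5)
We need three things: well-definedness of \eqref{eq:additive-character-omega} on classes modulo $\pideal$, $\A$-linearity, and agreement with the Gazda--Maurischat additive character. Write a tensor presentation $\varpi_{\Psi,\omega}=\sum_\nu c_\nu\otimes\ell_\nu$ with $c_\nu\in H$ and $\ell_\nu\in\Lambda_\Psi$. This presentation exists and $\varpi_{\Psi,\omega}$ is unique by the injectivity of multiplication by $\partial_xp$ and the bijectivity of $\operatorname{id}_H\otimes\delta_p$ recorded before \eqref{eq:delta-omega}. Then
\[
 \psi_\omega(\bar a)=\sum_\nu c_\nu\otimes\exp_\Psi\bigl(a(\theta)\ell_\nu/p(\theta)\bigr),
\]
so everything reduces to statements about the individual periods $\ell_\nu$.

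For well-definedness, take $a\in\pideal$ and write $a=pb$ with $b\in\A$. Then $a(\theta)\ell_\nu/p(\theta)=b(\theta)\ell_\nu$, and this lies in $\Lambda_\Psi$ because $\Lambda_\Psi$ is an $\A$-module via $\iota$. Indeed, $\exp_\Psi(b(\theta)\ell)=\Psi_b(\exp_\Psi(\ell))=0$. Hence each summand vanishes.

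The values are $p$-torsion, since $\Psi_p\exp_\Psi(a(\theta)\ell_\nu/p(\theta))=\exp_\Psi(a(\theta)\ell_\nu)=0$. This puts $\psi_\omega$ into $H\otimes_{\Fq}\Psi[\pideal]$.

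For $\A$-linearity, use the functional equation $\Psi_b\exp_\Psi(V)=\exp_\Psi(b(\theta)V)$. This gives
\[
 \psi_\omega(\overline{ba})=\sum_\nu c_\nu\otimes\Psi_b\bigl(\exp_\Psi(a(\theta)\ell_\nu/p(\theta))\bigr),
\]
which is the action of $b$ on the second tensor factor applied to $\psi_\omega(\bar a)$. Additivity in $a$ is immediate. The $H$-coefficients stay in the formal first factor throughout, matching the convention that constants are not multiplied into $\Cinf$. This is exactly why no noncommutation issue with $\Psi_b$ arises.

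The remaining step, comparison with \cite[Definition~4.9]{GazdaMaurischat2021}, is where I expect the main care to be needed. There the additive character attached to a period $\ell$ and a uniformizer pair $(u_\pideal,z_\pideal)$ with $u_\pideal z_\pideal=1$ is $\bar a\mapsto\exp_\Psi(a(\theta)z_\pideal(\theta)\ell)$, up to their normalization. With $u_\pideal=p$ and $z_\pideal=p^{-1}$ this is literally the $\nu$-th summand above.

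I would check two points. First, Definition~4.9 imposes hypotheses on $(u_\pideal,z_\pideal)$, essentially that $z_\pideal\in\pideal^{-1}$ generates $\pideal^{-1}/\A$ and the balancing condition. These hold trivially for a principal prime with $u_\pideal=p$. Second, their character is evaluated on the element of $\Lambda_\Psi$ corresponding, via $\delta_p$ and the different, to the chosen special function. This is precisely how $\varpi_{\Psi,\omega}$ was defined in \eqref{eq:delta-omega}, using Lemma~\ref{lem:split-different} to replace $\dfrak_{\A/\Fq[p]}$ by $(\partial_xp)$ after constant extension.

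Linearity in the period then shows that the scalar-extended character is the $H$-linear combination of their characters. This completes the identification summand by summand.
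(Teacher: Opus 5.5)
Your proposal is correct and follows essentially the same route as the paper. Like the paper, you expand $\varpi_{\Psi,\omega}$ in a tensor presentation and use $b(\theta)\pi\in\Lambda_\Psi$ both for well-definedness and to show the values are killed by $\Psi_p$. You then use the functional equation of $\exp_\Psi$ for $\A$-linearity and match each summand with the Gazda--Maurischat character for $u_{\pideal}=p$, $z_{\pideal}=p^{-1}$; your remark about the role of $\delta_p$ in that last step is not needed, since the comparison is made period by period.
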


\begin{proof}
Write $\varpi_{\Psi,\omega}=\sum_{\ell=1}^{s}c_\ell\otimes\pi_\ell$, with
$c_\ell\in H$ and $\pi_\ell\in\Lambda_\Psi$. Then
\[
 \psi_\omega(\bar a)
 =\sum_\ell c_\ell\otimes
 \exp_\Psi\!\left(\frac{a(\theta)\pi_\ell}{p(\theta)}\right).
\]
Replacing $a$ by $a+pb$ adds $b(\theta)\pi_\ell\in\Lambda_\Psi$
to each exponential argument and hence changes no value. For
$c\in\A$, the exponential functional equation gives
\[
 \psi_\omega(\overline{ca})
 =(\operatorname{id}_H\otimes\Psi_c)\psi_\omega(\bar a).
\]
Applying $\operatorname{id}_H\otimes\Psi_p$ gives zero, since each
resulting exponential is $\exp_\Psi(a(\theta)\pi_\ell)=0$.
This proves that the value lies in $H\otimes_{\Fq}\Psi[\pideal]$
and that the character is well defined and $\A$-linear.
For each period $\pi_\ell$ this is the character of
\cite[Definition~4.9]{GazdaMaurischat2021}, since $p$ and $p^{-1}$
are the elements $u_{\pideal}$ and $z_{\pideal}$ of that definition.
Linearity in the $H$-factor gives the stated comparison.
\end{proof}

\subsection{Evaluation of Gauss--Thakur sums}
\label{subsec:GT-evaluation}

\begin{definition}[Gauss--Thakur sum {\cite[Definition~4.1]{GazdaMaurischat2021}}]
\label{def:Gauss-Thakur-revised}
For a character $\chi:(\A/\pideal)^\times\to\mathbb F_{q^n}^\times$
and a presentation
$\varpi_{\Psi,\omega}=\sum_{\ell=1}^{s}c_\ell\otimes\pi_\ell$,
define its scalar Gauss--Thakur sum by
\begin{equation*}
 g_{\Psi,\omega}(\chi)
 =-\sum_{\ell=1}^{s}\sum_{\bar b\in(\A/\pideal)^\times}
 c_\ell\chi(\bar b)^{-1}
 \exp_\Psi\!\left(\frac{b(\theta)\pi_\ell}{p(\theta)}\right).
\end{equation*}
This is independent of the tensor presentation. It is the image of
$-\sum_{\bar b}\chi(\bar b)^{-1}\otimes\psi_\omega(\bar b)$ under
the multiplication map
\[
 \mathbb F_{q^n}\otimes_{\Fq}H\otimes_{\Fq}\Cinf\longrightarrow\Cinf,
 \qquad b\otimes c\otimes z\longmapsto bcz.
\]
The tensor sum uses the convention of
\cite[Definition~4.1]{GazdaMaurischat2021}.

\end{definition}

\begin{proposition}[Gauss--Thakur sums at the chosen branch]
\label{prop:GM-specialized}
Use the formal period of \eqref{eq:delta-omega}, the additive character $\psi_\omega$ of \eqref{eq:additive-character-omega}, and the Gauss--Thakur convention of Definition~\ref{def:Gauss-Thakur-revised}. Then, for $0\le j<d$,
\begin{equation}\label{eq:GM-specialized}
 \omega_\Phi(\lambda_{[j]})
 =\partial_xp(\lambda_{[j]})\,g_{\Psi,\omega}(\vartheta_{\pideal,\lambda}^{q^{Nj}}).
\end{equation}
Along the $q^N$-orbit of $\lambda$, the formula becomes
\begin{align*}
 g_{\Psi,\omega}\!\left(
 \vartheta_{\pideal,\lambda}^{q^{Nj}}\right)
 &=\bigl(\partial_xp(\lambda)\bigr)^{-q^{Nj}}
 \omega_\Phi\!\left(\lambda^{q^{Nj}}\right)
 \\
 &=\bigl(\partial_xp(\lambda)\bigr)^{-q^{Nj}}
 H_\Phi\!\left(\wt\pi_\Phi;\lambda^{q^{Nj}}\right).
\end{align*}
In terms of the $t$-derivative, the same value is
\begin{equation*}
 g_{\Psi,\omega}\!\left(
 \vartheta_{\pideal,\lambda}^{q^{Nj}}\right)
 =-\bigl((\lambda-\zeta)^2\frac{dp}{dt}(\lambda)\bigr)^{-q^{Nj}}
 \omega_\Phi\!\left(\lambda^{q^{Nj}}\right).
\end{equation*}
\end{proposition}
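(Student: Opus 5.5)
The plan is to derive \eqref{eq:GM-specialized} directly from \cite[Theorem~5.2]{GazdaMaurischat2021}, which evaluates $\delta_p$ applied to an element of $\dfrak\Lambda_\Psi$ at a zero of the prime as a Gauss--Thakur sum for the corresponding evaluation character. Lemma~\ref{lem:Tate-algebra-comparison} identifies $\mathbb T_\zeta$ with their completed tensor product, compatibly with both $\A$-actions and with evaluation at $\lambda_{[j]}$. So we work with one period $\pi_\ell$ of a presentation $\varpi_{\Psi,\omega}=\sum_\ell c_\ell\otimes\pi_\ell$ and then recombine $H$-linearly.

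\emph{Step one.} For each $\ell$, apply the Gazda--Maurischat theorem to the element $(\partial_xp)\pi_\ell$. We work after constant extension, where the different is the principal ideal generated by $\partial_xp$ (Lemma~\ref{lem:split-different}). We take $u_\pideal=p$ and $z_\pideal=p^{-1}$, as in Lemma~\ref{lem:additive-character}. The theorem should give
\[
\delta_p\bigl((\partial_xp)\pi_\ell\bigr)(\lambda_{[j]})=\partial_xp(\lambda_{[j]})\cdot\Bigl(-\sum_{\bar b}\chi(\bar b)^{-1}\exp_\Psi\bigl(b(\theta)\pi_\ell/p(\theta)\bigr)\Bigr),
\]
with $\chi=\vartheta_{\pideal,\lambda}^{q^{Nj}}$. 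The factor $\partial_xp(\lambda_{[j]})$ should appear because the balanced lift has first tensor factor equal to the different generator, and evaluation in the first factor at $\lambda_{[j]}$ produces this value.

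\emph{Step two.} Multiply by $c_\ell$ and sum over $\ell$. The evaluation map $m\circ(\chi_\eta\otimes\mathrm{id})$ of Lemma~\ref{lem:Tate-algebra-comparison} is compatible with the formal $H$-tensors. Hence the left side becomes $(\mathrm{id}_H\otimes\delta_p)((\partial_xp)\varpi_{\Psi,\omega})$ evaluated at $\lambda_{[j]}$, which equals $\omega_\Phi(\lambda_{[j]})$ by \eqref{eq:delta-omega}. The right side becomes $\partial_xp(\lambda_{[j]})\,g_{\Psi,\omega}(\chi)$ by Definition~\ref{def:Gauss-Thakur-revised}. This gives \eqref{eq:GM-specialized}.

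\emph{Step three.} Divide by $\partial_xp(\lambda_{[j]})$, which is nonzero by Lemma~\ref{lem:split-different}. Since $\partial_xp$ has coefficients in $H=\mathbb F_{q^N}$, which are fixed by $q^{Nj}$-powers, we get $\partial_xp(\lambda^{q^{Nj}})=\partial_xp(\lambda)^{q^{Nj}}$. Replace $\omega_\Phi$ by $H_\Phi(\wt\pi_\Phi;\cdot)$, which is its definition. The $t$-derivative form then follows from \eqref{eq:px-pt}, since $(\lambda-\zeta)^2\,dp/dt(\lambda)$ raised to the power $q^{Nj}$ gives the same value, $\zeta$ and $p$ being fixed.

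The main obstacle is step one: matching normalizations exactly. Gazda--Maurischat state their theorem for a single period, a specific character convention, and evaluation in the $\A$-factor. We must check three things: that the character $\bar a\mapsto a(\lambda_{[j]})$ is the one attached to the embedding at $\lambda_{[j]}$, that the sign convention of Definition~\ref{def:Gauss-Thakur-revised} agrees with theirs, and that the different factor enters as $\partial_xp$ with no extra unit. The first check to try is whether the Kähler different coincides with their trace different; Lemma~\ref{lem:split-different} settles this. A possible unit discrepancy $h(x)$ would be absorbed because $F'(x)=h\,\partial_xp$ and the formal period is defined relative to $\partial_xp$.
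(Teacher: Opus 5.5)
Your route is the paper's: extend Theorem~5.2 of Gazda--Maurischat formally by $H$, apply it to $\eta=(\partial_xp)\varpi_{\Psi,\omega}$, read off the left side as $\omega_\Phi(\lambda_{[j]})$ from \eqref{eq:delta-omega}, then divide and use that $\partial_xp$ has coefficients in $H=\mathbb F_{q^N}$. Steps two and three are fine. The gap is in Step one, exactly where the factor $\partial_xp(\lambda_{[j]})$ has to appear.

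Your reason is that ``the balanced lift has first tensor factor equal to the different generator.'' That is not how this works. A pure tensor $\partial_xp\otimes\pi_\ell$ is not balanced in general. The balanced element with product $(\partial_xp)\pi_\ell$ is a Casimir-type tensor for the monogenic presentation of Lemma~\ref{lem:split-different}. Evaluating its first factor at $\lambda_{[j]}$ is what produces a Gauss sum in Gazda--Maurischat's argument, not a scalar $\partial_xp(\lambda_{[j]})$. As quoted in the paper, the theorem gives $(\chi\widehat\otimes\operatorname{id})(\delta_p(\eta))$ as the Gauss sum built from the period $\eta$ itself, that is, from the values $\exp_\Psi(b(\theta)\eta/p(\theta))$. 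Nothing in it splits off a factor of the different. So your displayed formula in Step one is asserted (``the theorem should give'') rather than derived.

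The missing idea is that the Gauss sum is a $\chi$-eigenvector for the $\A$-action through $\Psi$. This is how the paper extracts the factor.

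\begin{itemize}
\item Keep $G_\pi=-\sum_{\bar b}\chi(\bar b)^{-1}\otimes\exp_\Psi(b(\theta)\pi/p(\theta))$ as a formal tensor in $\mathbb F_{q^n}\otimes_{\Fq}H\otimes_{\Fq}\Cinf$. Multiplying constants into $\Cinf$ does not commute with $\Psi_a$, so the tensor must stay formal.
\item Apply $\operatorname{id}\otimes\operatorname{id}\otimes\Psi_a$ and substitute $\bar c=\bar a\bar b$. For $\bar a$ a unit this multiplies $G_\pi$ by $\chi(\bar a)\otimes1\otimes1$. For $a\in\pideal$ both sides vanish, with $\chi(0)=0$.
\item Since $\exp_\Psi(a(\theta)b(\theta)\pi/p(\theta))=\Psi_a(\exp_\Psi(b(\theta)\pi/p(\theta)))$, replacing $\pi$ by $a(\theta)\pi$ multiplies $G_\pi$ by $a(\lambda_{[j]})$ in the first tensor factor.
\item Write $\partial_xp=\sum_\ell a_\ell\otimes c_\ell$ and use $H$-linearity. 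The Gauss sum of $(\partial_xp)\varpi_{\Psi,\omega}$ becomes $\partial_xp(\lambda_{[j]})$ times the sum defining $g_{\Psi,\omega}(\chi)$. Only then apply the multiplication map into $\Cinf$.
\end{itemize}

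This same reindexing also settles the first check you listed: the action on the Gauss eigenspace is by $\chi$ and not by $\chi^{-1}$. Your proposal leaves that check open.
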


\begin{proof}
We first check which character occurs in the scalar action.
For
$\bar a\in(\A/\pideal)^\times$, replace $\bar b$ by
$\bar c=\bar a\bar b$ in the defining finite sum. For
this the sum is kept in the formal tensor product
$\mathbb F_{q^n}\otimes_{\Fq}H\otimes_{\Fq}\Cinf$, and the
multiplication map $b\otimes c\otimes z\mapsto bcz$ is applied only
at the final step. Additivity and
$\A$-linearity of $\psi_\omega$ give
\begin{align*}
 &(\operatorname{id}_{\mathbb F_{q^n}}\otimes
 \operatorname{id}_H\otimes\Psi_a)
 \Bigl(-\sum_{\bar b\in(\A/\pideal)^\times}
 \chi(\bar b)^{-1}\otimes\psi_\omega(\bar b)\Bigr)\\
 &\quad=-\sum_{\bar c\in(\A/\pideal)^\times}
 \chi(\bar a^{-1}\bar c)^{-1}\otimes\psi_\omega(\bar c)\\
 &\quad=(\chi(\bar a)\otimes1\otimes1)
 \Bigl(-\sum_{\bar c\in(\A/\pideal)^\times}
 \chi(\bar c)^{-1}\otimes\psi_\omega(\bar c)\Bigr).
\end{align*}
For $a\in\pideal$ both sides are zero, with
$\chi(0)=0$. Thus the
$\A$-action on the Gauss eigenspace is evaluation by $\chi$, not by
its inverse. The minus sign and inverse character inside the sum are
exactly those of \cite[Definition~4.1]{GazdaMaurischat2021}.

By Lemma~\ref{lem:Tate-algebra-comparison}, the analytic algebra and
evaluation maps in this paper agree with those used in
\cite[Theorem~5.2]{GazdaMaurischat2021}. That theorem gives
\begin{equation*}
 \begin{split}
 (\chi\widehat\otimes\operatorname{id})(\delta_p(\eta))
 &=-\sum_{\bar b\in(\A/\pideal)^\times}
 \chi(\bar b)^{-1}\otimes
 \exp_\Psi\!\left(\frac{b(\theta)\eta}{p(\theta)}\right),\\
 &\hspace{30mm}\eta\in\dfrak_{\A/\Fq[p]}\Lambda_\Psi.
 \end{split}
\end{equation*}
Its period map is the map of
\cite[Definition~4.9]{GazdaMaurischat2021}, and
Lemma~\ref{lem:additive-character} identifies the additive character
with the one used here. Extend this identity formally by $H$ and
apply it to $\eta=(\partial_xp)\varpi_{\Psi,\omega}$.
If $\partial_xp=\sum_\ell a_\ell\otimes c_\ell$, the equivariance
just checked shows that this coefficient acts on the
Gauss sum as
\[
 \sum_\ell a_\ell(\lambda_{[j]})\otimes c_\ell\otimes1.
\]
By \eqref{eq:delta-omega}, the value of $\delta_p(\eta)$
is $1\otimes\omega_\Phi$. Applying the multiplication map
$b\otimes c\otimes v\mapsto bcv$ to both sides of the theorem
therefore gives $\omega_\Phi(\lambda_{[j]})$ on the left and
$\partial_xp(\lambda_{[j]})g_{\Psi,\omega}(\vartheta_{\pideal,\lambda}^{q^{Nj}})$
on the right.
This proves \eqref{eq:GM-specialized}.

Finally, $\partial_xp$ has coefficients in $H$, which is fixed by
$q^N$-Frobenius. Hence
\[
 \partial_xp(\lambda_{[j]})
 =\bigl(\partial_xp(\lambda)\bigr)^{q^{Nj}}\ne0
\]
by Lemma~\ref{lem:split-different}. Division gives the first formula.
Equation~\eqref{eq:omega-H-equality} gives the expression using
$H_\Phi$, and the chain rule~\eqref{eq:px-pt} gives the final formula.
The chain rule contributes the displayed minus sign.
\end{proof}

\begin{remark}\label{rem:Gauss-normalization}
The additive character $\psi_\omega$ takes values in
$H\otimes_{\Fq}\Psi[\pideal]$, and $g_{\Psi,\omega}(\chi)$ is
the $H$-linear combination of Gauss--Thakur sums formed from the periods
in a tensor presentation of $\varpi_{\Psi,\omega}$. The formulas of
Proposition~\ref{prop:GM-specialized} hold for this specific
character.
\end{remark}

\section*{Acknowledgements}
Chuangqiang Hu was supported by the National Natural Science Foundation of China (No.~12441107). Xiao-Min Huang was supported by the National Natural Science Foundation of China (No.~12501094). The authors acknowledge the use of ChatGPT 6 to assist with language editing and manuscript revision.

\end{document}